\newtheorem{theorem}{Theorem}[section]
\newtheorem{lemma}[theorem]{Lemma}
\newtheorem{proposition}[theorem]{Proposition}
\newtheorem{remark}[theorem]{Remark}
\newtheorem{definition}[theorem]{Definition}
\newcommand\mloc{M_{\rm loc}}
\newcommand{\Omegawk}{\Omega_{\rm wk}}
\newcommand\re{\mbox{Re}\,}
\newcommand\NN{\mathbb{N}}
\title{Injective Envelopes and Local Multiplier Algebras of Some Spatial
Continuous Trace C$^*$-algebras\footnote{2000
Mathematics Subject Classification:  Primary 46L05; Secondary 46L07.
Keywords and Phrases: local multiplier algebra, injective envelope, continuous trace C$^*$-algebra, continuous Hilbert bundle.
This work is supported in part by the NSERC Discovery Grant Program (Canada) and CONICET (Argentina).}}
\author{Mart\'in Argerami, Douglas Farenick, and Pedro Massey
\\{ } \\
{\em
Department of Mathematics and Statistics,
University of Regina}\\
{\em Regina, Saskatchewan S4S 0A2, Canada}
\\ { } \\
{\em Departamento de Matem\'atica, Facultad de Ciencias Exactas} \\
{\em Universidad Nacional de La Plata,
Argentina} }
\begin{document}
\maketitle

\begin{abstract} A precise description of the injective envelope of a spatial continuous trace C$^*$-algebra $A$
over a Stonean space $\Delta$ is given. The description is based on the notion of a weakly
continuous Hilbert bundle, which we show herein to be a Kaplansky--Hilbert module over
the abelian AW$^*$-algebra $C(\Delta)$. We then use the description of the injective envelope of
$A$ to study the first- and second-order
local multiplier algebras of $A$. In particular, we show that the second-order local multiplier
algebra of $A$ is precisely the injective envelope of $A$.
\end{abstract}

\section*{Introduction}

A commonly used technique in the theory of operators algebras is
to study a given C$^*$-algebra $A$ by one or more of its
enveloping algebras. Well known examples of such enveloping
algebras are the enveloping von Neumann algebra $A^{**}$ and the
multiplier algebra $M(A)$. In this paper we consider two others:
the local multiplier algebra $\mloc(A)$ and the injective envelope
$I(A)$, both of which have received considerable study and
application in recent years (see, for example,
\cite{ara-mathieu1999,birkenmeier-park-rizvi2009,blecher--paulsen2001,
dumitru--peligrad--visinescu2006,frank--paulsen2003,
pedersen1978,somerset1996,somerset2000}).

The C$^*$-algebras $\mloc(A)$ and $I(A)$ are difficult to determine precisely,
even for fairly rudimentary types of C$^*$-algebras $A$. For instance,
if we denote by $C_0(T)$ an abelian C$^*$-algebra and by $K(H)$ the ideal of
compact operators over $H$, their local multiplier algebra and injective envelope have
been readily computed;  but the injective envelope of
$C_0(T)\otimes K(H)$ is much more difficult to describe: see
\cite{hamana-tensor} for an abstract description and
\cite{ara-mathieu2008,argerami--farenick--massey2009} for a somewhat more
concrete one.

Our first goal in the present paper is to make a further
contribution to the issue of the determination of $I(A)$ and
$\mloc(A)$ from $A$ by considering continuous trace C$^*$-algebras
studied by Fell \cite{fell1961} that arise from continuous Hilbert
bundles. The class of such algebras contains in particular all C$^*$-algebras of
the form $C_0(T)\otimes K(H)$, which were studied in
\cite{argerami--farenick--massey2009}. Because the centres of $I(A)$ and
$\mloc(A)$ are AW$^*$-algebras, and thus have Stonean maximal ideal spaces,
we restrict ourselves in this paper to locally compact Hausdorff spaces $T$ that
are Stonean. In so doing, we establish an important first step toward
a complete analysis, in the case of non-Stonean $T$, of the C$^*$-algebras
$I(A)$, $\mloc(A)$, and $\mloc\left(\mloc(A)\right)$ for spatial continuous
trace C$^*$-algebras $A$ with spectrum $T$. As the passage from general $T$ to Stonean $T$
involves a number of technicalities, the application of the main results herein to
the case of arbitrary locally compact Hausdorff spaces $T$ will be deferred to a subsequent
article.

Our second goal  is to study and use the notion of a weakly continuous Hilbert
bundle $\Omegawk$ relative to a continuous Hilbert bundle $\Omega$ over a locally compact Hausdorff space $T$.
Particular cases of this notion have been previously considered in \cite{hamana-tensor,takemoto1976}.
It is natural to consider $\Omega$ as a C$^*$-module over the abelian C$^*$-algebra $C_0(T)$; if, moreover,
$T$ is a Stonean space $\Delta$, we then show  $\Omegawk$ carries the structure of a faithful AW$^*$-module over $C(\Delta)$.
In this latter situation, such C$^*$-modules are called Kaplansky--Hilbert modules.
We study the C$^*$-modules $\Omega$ and $\Omegawk$, as well as certain C$^*$-algebras of
endomorphisms of these modules,  using the beautiful machinery  Kaplansky developed
in his seminal work from the early 1950s \cite{kaplansky1953}.
In particular, we prove that the C$^*$-algebra $B(\Omegawk)$ of
bounded adjointable endomorphisms of $\Omegawk$ is the injective envelope and second-order local multiplier algebra of
the C$^*$-algebra $K(\Omega)$ of ``compact'' endomorphisms of $\Omega$.

Assuming that $T=\Delta$, a Stonean space, and in postponing the precise definitions until the
following section, we summarise in this paragraph the main results of the paper. In Section
\ref{section:omegawk}, we show that $\Omegawk$ is a Kaplansky--Hilbert module that contains
$\Omega$ as a C$^*$-submodule such that $\Omega^\bot=\{0\}$. In Section \ref{section:injective envelops},
we prove that $B(\Omegawk)$ is the injective envelope of both $K(\Omega)$ and the Fell continuous
trace C$^*$-algebra $A$ induced by the bundle $\Omega$. Section \ref{section:local multipliers} deals
with local multipliers, and we show that $B(\Omegawk)$ is the second-order local multiplier algebra
of both $K(\Omega)$ and Fell algebra $A$. We also prove that
the equality $\mloc(\mloc(A))=I(A)$ holds for certain type I non-separable C$^*$-algebras,
        generalising a result of Somerset \cite{somerset1996}.
        Finally, in Section \ref{section:reduction} we find that a
direct-sum decomposition of $\Omegawk$ leads to a corresponding
        decomposition of (the generally non-AW$^*$) algebra $\mloc(A)$
        but not to a decomposition of $A$.
\section{Preliminaries}\label{section:preliminaries}

If $T$ is a locally compact Hausdorff space and $\{H_t\}_{t\in T}$ is family of Hilbert spaces,
 a vector field on $T$ with fibres $H_t$ is a function $\nu:T\rightarrow \bigsqcup_t\,H_t$ in which
$\nu(t)\in H_t$,
for every $t\in T$. Such a vector field $\nu$ is said to be bounded if the function
$t\mapsto \|\nu(t)\|$
is bounded. From this point on, the notation $T\rightarrow\bigsqcup_t\,H_t$ will be taken to also imply
that, for all $t$, the point $t$ is mapped into the corresponding fibre $H_t$.

\begin{definition}\label{definition:continuous Hilbert bundle}
A {\em continuous Hilbert bundle} \cite{dixmier--douady1963} is a triple $(T,\{H_t\}_{t\in T},\Omega)$,
where
$\Omega$ is a set of vector fields on $T$ with fibres $H_t$
such that:
\begin{enumerate}
\item[{\rm (I)}] $\Omega$ is a $C(T)$-module with the action $(f\cdot\omega)(t)=f(t)\omega(t)$;
\item[{\rm (II)}] for each $t_0\in T$, $\{\omega(t_0):\ \omega\in \Omega\}=H_{t_0}$;
\item[{\rm (III)}] the map $t\mapsto \|\omega(t)\|$ is continuous, for all $\omega\in\Omega$;
\item[{\rm (IV)}] $\Omega$ is closed under local uniform approximation---that is, if $\xi:T\rightarrow
                \bigsqcup_t\,H_t$
                is any vector field such that for every $t_0\in T$ and $\varepsilon>0$ there is an
                open set $U\subset T$
                containing $t_0$ and a $\omega\in \Omega$ with $\|\omega(t)-\xi(t)\|<\varepsilon$
                for all $t\in U$, then necessarily $\xi\in\Omega$.
\end{enumerate}
\end{definition}

Dixmier and Douady \cite{dixmier--douady1963} show that (I), (II), and (IV) can be replaced by
other axioms, such as those given by Fell \cite{fell1961},
without altering the structure that arises. For example, in the presence of the other axioms, (II)
is equivalent to ``$\{\omega(t_0)\,:\omega\in\Omega\}$ is dense in $H_{t_0}$, for each $t_0\in T$'';
in the presence of (IV), axiom (I) can be replaced by ``$\Omega$ is a complex vector space''.

We turn next to the notion of a weakly continuous Hilbert bundle.
If $(T,\{H_t\}_{t\in T},\Omega)$ is a continuous Hilbert bundle then,
by the polarisation identity, the function $t\mapsto \langle\omega_1(t),\omega_2(t)\rangle$ is
continuous for all $\omega_1,\omega_2\in\Omega$.
In defining $\langle\omega_1,\omega_2\rangle$ to be the map $T\rightarrow\mathbb C$ given by
$t\mapsto \langle\omega_1(t),\omega_2(t)\rangle$,
one obtains a $C(T)$-valued inner product on $\Omega$ which gives $\Omega$ the structure of an
inner product module
over $C(T)$.

\begin{definition}\label{definition:weakly continuous}
A vector field $\nu:T\rightarrow\bigsqcup_t\,H_t$ is said to be \emph{weakly continuous} with
respect to the continuous Hilbert bundle $(T,\{H_t\}_{t\in T},\Omega)$ if the function
\[
t\longmapsto \langle \nu(t),\omega(t)\rangle
\]
is continuous for all $\omega\in\Omega$. The set of all bounded weakly continuous vector fields
with respect to a given $\Omega$ will be denoted by $\Omegawk$, that is
\[
\Omegawk=\{\nu:T\rightarrow \bigsqcup_t\,H_t:\ \sup_t\|\nu(t)\|<\infty\ \mbox{ and }
\nu\mbox{ is weakly continuous}\}.
\]
We will call the quadruple  $(T,\{H_t\}_{t\in T},\Omega, \Omegawk)$
a \emph{weakly continuous Hilbert bundle over $T$}.
\end{definition}

We remark that when $T$ is compact,
$\Omegawk$ is a $C(T)$-module under the pointwise module action, and also
$\Omega\subset\Omegawk$  (because then every continuous field on $T$ is
bounded). However, the function $t\mapsto \langle\nu_1(t),\nu_2(t)\rangle$
is generally not continuous for arbitrary $\nu_1,\nu_2\in\Omegawk$.
Thus, although $\Omegawk$ is,
algebraically, a module over $C_b(T)$, it is not in general an inner product module over $C_b(T)$.
Nevertheless, if $T$ has the right topology---namely that of a Stonean space---then we
show (Theorem \ref{theorem:aw-mod1}) that it is possible to endow a weakly continuous Hilbert bundle with the structure of a
C$^*$-module over the C$^*$-algebra of continuous complex-valued functions on $T$.

The continuous trace C$^*$-algebras we consider herein were first studied by Fell \cite{fell1961}.
We now recall their definition.

Assume that $\{A_t\}_{t\in T}$ is
a family of C$^*$-algebras indexed by the locally compact Hausdorff topological space $T$. An operator
field is a map $a:T\rightarrow \bigsqcup_t\,A_t$ such that $a(t)\in A_t$, for each $t\in T$.

\begin{definition} Let $(T,\{H_t\}_{t\in T},\Omega)$ be a continuous Hilbert bundle.
An operator field $a:T\rightarrow\bigsqcup_{t\in T}K(H_t)$ is:
\begin{enumerate}
  \item \emph{almost finite-dimensional} (with respect to $\Omega$) if for each $t_0\in T$ and $\varepsilon>0$
       there exist an open set $U\subset T$ containing $t_0$
       and $\omega_1,\dots, \omega_n\in \Omega$ such that
          \begin{enumerate}
            \item[{(a)}] $\omega_1(t),\dots,\omega_n(t)$ are linearly independent for every $t\in U$, and
             \item[{(b)}] $\|p_ta(t)p_t-a(t)\|<\varepsilon$ for all $t\in U$, where $p_t\in B(H_t)$ is the
                         projection with range $\mbox{\rm Span}\,\{\omega_j(t)\,:\, 1 \le  j\le n\}$;
          \end{enumerate}
  \item \emph{weakly continuous} (with respect to $\Omega$) if the complex-valued function
\[
t\,\longmapsto\,\langle a(t)\omega_1(t), \omega_2(t)\rangle
\]
is continuous for every $\omega_1,\omega_2\in\Omega$.
\end{enumerate}
\end{definition}

\begin{definition}\label{definition:Fell algebra} {\rm (\cite{fell1961})}
Let $(T,\{H_t\}_{t\in T},\Omega)$ be a continuous Hilbert bundle.
The {\em Fell algebra of the Hilbert bundle}
$(T,\{H_t\}_{t\in T},\Omega)$, denoted by $A=A(T,\{H_t\}_{t\in T},\Omega)$,
is the set of all weakly continuous, almost
finite-dimensional operator fields $a:T\rightarrow\bigsqcup_{t\in
T}K(H_t)$ for which $t\mapsto \|a(t)\|$ is continuous and vanishes
at infinity, endowed with pointwise operations and norm
\[
\|a\|\,=\,\max_{t\in T}\,\|a(t)\|\,,\qquad a\in A\,.
\]
\end{definition}
We shall make repeated use of the following fact about the  Fell algebras of Hilbert bundles:
if $A=A(T,\{H_t\}_{t\in T},\Omega)$, for some
continuous Hilbert bundle $(T,\{H_t\}_{t\in T},\Omega)$, then
$A$ is a continuous trace
C$^*$-algebra with spectrum $\hat A\simeq T$ \cite[Theorems 4.4, 4.5]{fell1961}.

\section{An AW$^*$-module Structure for $\Omegawk$}\label{section:omegawk}

Assume henceforth that $T=\Delta$ is a Stonean space;
that is, $\Delta$ is Hausdorff, compact, and extremely
disconnected. The abelian C$^*$-algebra $C(\Delta)$ is an
AW$^*$-algebra and so one may ask whether the C$^*$-modules
$\Omega$ and $\Omegawk$ are AW$^*$-modules in the sense of
Kaplansky \cite{kaplansky1953}. We shall show that this is indeed
true for the module $\Omegawk$. As a consequence of this last fact we shall
get that the C$^*$-algebra $B(\Omegawk)$ of bounded adjointable endomorphisms
of $\Omegawk$ is an AW$^*$-algebra of type I.

The following lemmas are needed to describe the $C(\Delta)$-Hilbert module
structure of $\Omegawk$.

\begin{lemma}\label{lemma:norm off a meagre set}
Let $f:\Delta\rightarrow\mathbb{R}$ be a lower semicontinuous function such that there exist
$g\in C(\Delta)$ and a meagre set $M\subset\Delta$ with $f(s)=g(s)$ for all $s\in\Delta\setminus M$. Then
\[
\sup_{s\in\Delta}\, g(s)=\sup_{s\in\Delta\setminus M}f(s)=\sup_{s\in\Delta}\,f(s).
\]
\end{lemma}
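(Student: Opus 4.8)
The plan is to establish the two equalities separately, treating the chain $\sup_{\Delta} g = \sup_{\Delta\setminus M} f = \sup_{\Delta} f$. The only topological input I would need is that $\Delta$, being compact and Hausdorff, is a Baire space, so that the complement of the meagre set $M$ is dense in $\Delta$; extreme disconnectedness plays no role in this particular lemma.

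First I would dispose of the leftmost equality. Since $f$ and $g$ agree on $\Delta\setminus M$, we have $\sup_{\Delta\setminus M} f = \sup_{\Delta\setminus M} g$. Because $g$ is continuous and $\Delta\setminus M$ is dense, every value $g(s)$ with $s\in\Delta$ is a limit (along a net in $\Delta\setminus M$) of values of $g$ on $\Delta\setminus M$, whence $g(s)\le \sup_{\Delta\setminus M} g$. Taking the supremum over $s\in\Delta$, and noting that the reverse inequality is trivial, gives $\sup_{\Delta} g = \sup_{\Delta\setminus M} g = \sup_{\Delta\setminus M} f$.

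It remains to prove the only substantive equality, $\sup_{\Delta\setminus M} f = \sup_{\Delta} f$. Write $L := \sup_{\Delta\setminus M} f$, which by the previous step equals $\sup_{\Delta} g$. The inequality $L \le \sup_{\Delta} f$ is immediate, so I would argue the reverse by contradiction: suppose some point $s_0\in\Delta$ satisfies $f(s_0) > L$, and fix a real number $c$ with $L < c < f(s_0)$. Here lower semicontinuity enters decisively: the superlevel set $U = \{s\in\Delta : f(s) > c\}$ is open and contains $s_0$, hence is a nonempty open set. Density of $\Delta\setminus M$ then furnishes a point $s_1 \in U\cap(\Delta\setminus M)$, and at such a point $c < f(s_1) = g(s_1) \le \sup_{\Delta} g = L$, contradicting $c > L$. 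Hence $f(s)\le L$ for every $s\in\Delta$, so $\sup_{\Delta} f \le L$, completing the chain.

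I do not anticipate a serious obstacle; the one point requiring care is ensuring that the open set $U$ meets $\Delta\setminus M$, which is exactly where the Baire property (density of the complement of a meagre set) is combined with the openness of $U$ guaranteed by lower semicontinuity. The real-valuedness of $f$ is what lets $c$ be chosen finite, so no complications from infinite values arise.
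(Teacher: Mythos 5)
Your proof is correct and follows essentially the same route as the paper's: both use the Baire property of $\Delta$ to get density of $\Delta\setminus M$ and then lower semicontinuity to propagate the bound $\sup_{\Delta\setminus M} f$ from the dense set to all of $\Delta$ (your superlevel-set contradiction is just the explicit unpacking of that step, and your treatment of $g$ via continuity matches the paper's ``same argument'' remark). No gaps.
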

\begin{proof}
Let $\rho=\displaystyle\sup_{s\in\Delta\setminus
M}\,f(s)=\sup_{s\in\Delta\setminus
M}\,g(s)\leq \displaystyle\sup_{s\in\Delta}\,g(s)$; then $f(s)\le \rho$ for all $s\in
\Delta\setminus M$.
Because $\Delta$ is a Baire space,  $\overline{\Delta\setminus M}=\Delta$; thus,
by the lower semi-continuity, $f(s)\le\rho$ for every
$s\in\Delta$. The same argument yields that $g(s)\le \rho$ for
all $s\in \Delta$.
\end{proof}

\begin{lemma}\label{lemma:lowersemicontinuous} Assume that $(\Delta, \{H_s\}_{s\in \Delta}, \Omega)$ is a
continuous Hilbert bundle and $\nu\in \Omegawk$. Then
\begin{enumerate}
  \item\label{lemma:lowersemicontinuous:1} the function $s\mapsto\|\nu(s)\|^2$ is lower semicontinuous;
  \item\label{lemma:lowersemicontinuous:2} there is a meagre subset $M\subset \Delta$ and a
        continuous function $h:\Delta\rightarrow\mathbb R_+$ such
        that
     \begin{enumerate}
         \item $h(s)=\|\nu(s)\|^2$ for all $s\in\Delta\setminus M$,
        and \item $\|h\|=\displaystyle\sup_{s\in\Delta\setminus
        M}\,\|\nu(s)\|^2=\displaystyle\sup_{s\in\Delta}\,\|\nu(s)\|^2$.
\end{enumerate}
\end{enumerate}
\end{lemma}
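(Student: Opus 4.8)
The plan is to treat the two parts separately: the lower semicontinuity in \textbf{(i)} comes directly from weak continuity together with the bundle axioms, while the continuous representative $h$ in \textbf{(ii)} is produced from the order-completeness of $C(\Delta)$ afforded by the Stonean hypothesis, after which Lemma \ref{lemma:norm off a meagre set} supplies the supremum identities.

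For part \textbf{(i)}, I would fix $\alpha\ge 0$ and show that $\{s\in\Delta:\|\nu(s)\|^2>\alpha\}$ is open. For each $\omega\in\Omega$ the set $U_\omega=\{s:\omega(s)\neq 0\}$ is open by axiom (III), and on $U_\omega$ the function $s\mapsto |\langle\nu(s),\omega(s)\rangle|^2/\|\omega(s)\|^2$ is continuous, being the quotient of a continuous numerator (weak continuity of $\nu$) by a continuous, nonvanishing denominator. The Cauchy--Schwarz inequality gives $|\langle\nu(s),\omega(s)\rangle|^2/\|\omega(s)\|^2\le\|\nu(s)\|^2$ on $U_\omega$, while the density of $\{\omega(s):\omega\in\Omega\}$ in $H_s$ (axiom (II)) produces, whenever $\nu(s)\neq 0$, elements $\omega$ making this quotient arbitrarily close to $\|\nu(s)\|^2$. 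Thus $\|\nu(s)\|^2=\sup\{|\langle\nu(s),\omega(s)\rangle|^2/\|\omega(s)\|^2:\omega\in\Omega,\ s\in U_\omega\}$. If this supremum exceeds $\alpha$ at some $s_0$, then a single $\omega$ already does, and by continuity of the corresponding quotient on the open set $U_\omega$ the strict inequality persists on a neighbourhood of $s_0$. This exhibits $\{\|\nu(\cdot)\|^2>\alpha\}$ as a union of open sets, which is the lower semicontinuity asserted in \textbf{(i)}.

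For part \textbf{(ii)}, set $f(s)=\|\nu(s)\|^2$, a bounded (since $\nu\in\Omegawk$), nonnegative, lower semicontinuous function. Because $\Delta$ is Stonean, $C(\Delta)_{sa}$ is Dedekind complete, so the upward-directed, bounded family $\mathcal G=\{g\in C(\Delta):0\le g\le f\}$ has a least upper bound $h\in C(\Delta)$ in the (pointwise) order. As the order is pointwise, $h\ge g$ everywhere for each $g\in\mathcal G$, and since $f$ is the pointwise supremum of its continuous minorants (normality of $\Delta$ and lower semicontinuity of $f$), we get $h\ge f$ pointwise; in particular $h\ge 0$. The crux is to show that $M=\{s:h(s)>f(s)\}$ is meagre. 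Writing $M=\bigcup_n\{h\ge f+1/n\}$, I would prove each $\{h\ge f+1/n\}$ is nowhere dense: were it dense in some nonempty open $O$, then every $g\in\mathcal G$ would satisfy $g\le f\le h-1/n$ on a dense subset of $O$, hence on $\overline O$ by continuity; extremal disconnectedness makes $\overline O$ clopen, so $h'=h-\tfrac1n\chi_{\overline O}$ is a continuous upper bound for $\mathcal G$ lying strictly below $h$ on $O$, contradicting minimality of $h$. The Baire property of $\Delta$ then renders $M$ meagre, giving part (a) with $h\in C(\Delta)$, $h\ge 0$, and $h=f$ off $M$.

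Part (b) is then immediate: $f$ is lower semicontinuous by \textbf{(i)}, $h\in C(\Delta)$ agrees with $f$ off the meagre set $M$, and $h\ge 0$, so Lemma \ref{lemma:norm off a meagre set} applied to $f$ and $g=h$ yields $\|h\|=\sup_{\Delta}h=\sup_{\Delta\setminus M}f=\sup_{\Delta}f$, which is exactly the displayed chain of equalities. I expect the nowhere-density argument in (a) to be the main obstacle; the remaining assertions follow directly from the bundle axioms, the Dedekind completeness of $C(\Delta)$, and the already established Lemma \ref{lemma:norm off a meagre set}.
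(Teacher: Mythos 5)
Your proposal is correct, and in both parts it takes a genuinely different route from the paper's. For part (i), the paper invokes Parseval's identity at $s_0$ and then Fell's orthonormalization result \cite[Lemma 4.2]{fell1961} to produce fields $\omega_1,\dots,\omega_n\in\Omega$ that are orthonormal on a whole neighbourhood $V$ of $s_0$, so that $g=\sum_j|\langle\nu,\omega_j\rangle|^2$ is (via Bessel) a continuous minorant of $\|\nu(\cdot)\|^2$ on $V$ exceeding $r$ at $s_0$; you instead take a single field $\omega$ passing through $\nu(s_0)$ and work with the normalized quotient $|\langle\nu,\omega\rangle|^2/\|\omega\|^2$ on the open set where $\omega$ does not vanish, with Cauchy--Schwarz supplying the pointwise bound. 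This is more elementary and avoids Fell's lemma altogether. For part (ii), the paper quotes \cite[Proposition III.1.7]{Takesaki-bookI} --- every bounded nonnegative lower semicontinuous function on a Stonean space agrees with a continuous function off a meagre set --- as a black box, whereas you reprove that fact: $h$ is the least upper bound in $C(\Delta)$ (Dedekind complete since $\Delta$ is Stonean) of the continuous minorants of $f=\|\nu(\cdot)\|^2$, and each set $\{h\ge f+1/n\}$ is nowhere dense because otherwise $h-\tfrac1n\chi_{\overline O}$ (with $\overline O$ clopen by extremal disconnectedness) would be a continuous upper bound strictly below $h$. Your argument is sound and is essentially the standard proof of the cited proposition; it buys self-containedness at the cost of length. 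Both proofs conclude identically through Lemma \ref{lemma:norm off a meagre set}. One wording slip worth noting: meagreness of $M=\bigcup_n\{h\ge f+1/n\}$ follows directly from the definition of meagre once each piece is shown nowhere dense; the Baire property of $\Delta$ is not what establishes it there (Baire is what makes meagre sets negligible, as in Lemma \ref{lemma:norm off a meagre set} itself).
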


\begin{proof} Let $r\in\mathbb R$ be fixed and consider
$U_r=\{s\in\Delta\,:\,r<\|\nu(s)\|^2\}$. We aim to show that $U_r$
is open. Choose $s_0\in U_r$. Thus, $r<\|\nu(s_0)\|^2$. By
Parseval's formula, there are orthonormal vectors
$\xi_1,\dots,\xi_n\in H_{s_0}$ such that
$r<\displaystyle\sum_{j=1}^n|\langle\nu(s_0),\xi_j\rangle|^2\le\|\nu(s_0)\|^2$.
Choose any $\mu_1,\dots,\mu_n\in\Omega$ such that
$\mu_j(s_0)=\xi_j$, for each $j$. Because $\xi_1,\dots,\xi_n$ are
orthogonal, $\mu_1(s),\dots,\mu_n(s)$ are linearly independent in
an open neighbourhood of $s_0$. Hence, by \cite[Lemma
4.2]{fell1961}, there is an open set $V$ containing $s_0$ and
vector fields $\omega_1,\dots,\omega_n \in\Omega$ such that
$\omega_1(s),\dots,\omega_n(s)$ are orthonormal for all $s\in V$,
and $\omega_j(s_0)=\xi_j$ for each $j$. The function
\[
g(s)\,=\,\sum_{j=1}^n |\langle \nu(s), \omega_j(s)\rangle|^2
\]
on $\Delta$ is continuous and satisfies $g(s)\le\|\nu(s)\|^2$, for
every $s\in V$, and $r<g(s_0)$. Therefore, by the continuity of
$g$, there is an open set $W\subset V$ containing $s_0$ such that
$r<g(s)\le\|\nu(s)\|^2$ for all $s\in W$. This proves that $U_r$
contains an open set around each of its points. That is, $U_r$ is
open.

Because every bounded nonnegative lower semicontinuous function on
a Stonean space $\Delta$ agrees with a nonnegative
continuous function  off a meagre set
$M$ \cite[Proposition III.1.7]{Takesaki-bookI}, the function $h\in
C(\Delta)$ as in \eqref{lemma:lowersemicontinuous:2} exists and satisfies $h(s)=\|\nu(s)\|^2$ for
$s\in\Delta\setminus M$.

The last statement follows from Lemma \ref{lemma:norm off a meagre set}.
\end{proof}

Let $(\Delta,\{H_t\}_{t\in \Delta},\Omega, \Omegawk)$
be a weakly continuous Hilbert bundle over $\Delta$.
Given $\nu\in\Omegawk$, the function $h$ that arises in Lemma
\ref{lemma:lowersemicontinuous} will be denoted by $\langle\nu,\nu\rangle$. There
is no ambiguity in so doing because if $h_1,h_2\in C(\Delta)$ and
if $h_1(s)=h_2(s)$ for all $s\not\in(M_1\cup M_2)$ for some meagre
subsets $M_1$ and $M_2$, then $h_1$ and $h_2$ agree on $\Delta$.
(If not, then by continuity, $h_1$ and $h_2$ would differ on an
open set $U$; but $\emptyset\not=U\subset M_1\cup M_2$ is in
contradiction to the fact that no meagre set in a Baire space can
contain a nonempty open set.)

Now use the polarisation identity to define $\langle\nu_1,\nu_2\rangle\in C(\Delta)$
for any pair $\nu_1,\nu_2\in \Omegawk$. This gives $\Omegawk$ the structure of pre-inner
product module over $C(\Delta)$ whereby
for each $\nu_1,\nu_2\in\Omegawk$ there is a meagre subset $M_{\nu_1,\nu_2}\subset \Delta$
such that the continuous function $\langle \nu_1, \nu_2 \rangle$ satisfies
\[
\langle \nu_1, \nu_2 \rangle\,(s)\,=\,\langle\nu_1(s), \nu_2(s)\rangle\,,\qquad\forall\,
s\in \Delta\setminus M_{\nu_1,\nu_2}\,.
\]
In particular, if $\nu\in\Omegawk$ and $\omega\in\Omega$, then
\[
\langle \nu, \omega \rangle\,(s)\,=\,\langle\nu(s), \omega(s)\rangle\,,\qquad\forall\,s\in \Delta\,.
\]
In fact, $\Omegawk$ is an inner product module over $C(\Delta)$, for if $\nu\in\Omegawk$ satisfies $\langle\nu,\nu\rangle=0$,
then Lemma \ref{lemma:lowersemicontinuous} yields $\|\nu(s)\|^2=0$ for all $s\in\Delta$. Therefore,
\[
\|\nu\|\,=\,\|\langle\nu,\nu\rangle\|^{1/2}\,,\quad\nu\in\Omegawk\,,
\]
defines a norm
on $\Omegawk$, where
\begin{equation}\label{e:norm}
\|\nu\|^2\,=\,\sup_{s\in\Delta}\,\langle \nu(s),\nu(s)\rangle\,=\,\|\langle \nu,\nu\rangle\|\,.
\end{equation}

\medskip

Recall that given a C$^*$-algebra $B$, a \emph{Hilbert C$^*$-module over $B$} is a left $B$-module $E$
together with a $B$-valued definite sequilinear map $\langle\,,\,\rangle$ such
that $E$ is complete with the norm $\|\nu\|=\|\langle \nu,\nu\rangle\|^{1/2}$ (we refer to
\cite{Lance-book} for a detailed account on Hilbert modules).

Note that if $\nu\in \Omegawk$, then
$|\nu|(s):=\langle \nu,\nu\rangle^{1/2}(s)\geq \|\nu(s)\|$ for $s\in \Delta$
and there exists a meagre set $M\subset \Delta$ with
$|\nu|(s)=\|\nu(s)\|$ if $s\in (\Delta\setminus M)$ (Lemma \ref{lemma:lowersemicontinuous}).
These facts will be used repeatedly from now on.

\begin{proposition}
$\Omegawk$ is a C$^*$-module over $C(\Delta)$ and $\Omega$ is a
C$^*$-submodule of $\Omegawk$.
\end{proposition}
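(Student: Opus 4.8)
The plan is to verify the two remaining requirements separately: completeness of $\Omegawk$ in the norm of \eqref{e:norm}, and that $\Omega$ sits inside $\Omegawk$ as a closed submodule whose inner product is the restriction of the one already constructed on $\Omegawk$. The preceding discussion has supplied all the algebraic data---the $C(\Delta)$-module structure from axiom (I), the definiteness of $\langle\,,\,\rangle$ (already deduced from Lemma \ref{lemma:lowersemicontinuous}), and the crucial identity $\|\nu\|^2=\sup_{s\in\Delta}\|\nu(s)\|^2$ from \eqref{e:norm}. Thus the essential observation I would exploit is that the module norm on $\Omegawk$ \emph{coincides with the uniform norm} on bounded vector fields, which turns completeness into a uniform-convergence argument.

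For completeness, I would begin with a Cauchy sequence $(\nu_n)$ in $\Omegawk$. Since $\|\nu_n-\nu_m\|=\sup_{s\in\Delta}\|\nu_n(s)-\nu_m(s)\|$ by \eqref{e:norm}, the sequence is uniformly Cauchy, so for each $s$ the sequence $(\nu_n(s))$ is Cauchy in the complete space $H_s$ and converges to a vector $\nu(s)$; this defines a vector field $\nu$. Boundedness of $\nu$ is immediate from the uniform bound on the norms $\|\nu_n\|$. To see that $\nu$ is weakly continuous, I fix $\omega\in\Omega$ and note that each function $s\mapsto\langle\nu_n(s),\omega(s)\rangle$ is continuous; the Cauchy--Schwarz estimate $|\langle\nu_n(s),\omega(s)\rangle-\langle\nu(s),\omega(s)\rangle|\le\|\nu_n(s)-\nu(s)\|\,\|\omega\|$ together with $\sup_{t\in\Delta}\|\nu_n(t)-\nu(t)\|\to0$ shows that these functions converge uniformly to $s\mapsto\langle\nu(s),\omega(s)\rangle$, which is therefore continuous. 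Hence $\nu\in\Omegawk$, and the same uniform estimate gives $\|\nu_n-\nu\|\to0$, so $\Omegawk$ is complete and is a C$^*$-module over $C(\Delta)$.

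For the second assertion I would first check $\Omega\subseteq\Omegawk$: each $\omega\in\Omega$ is bounded because $\Delta$ is compact and $s\mapsto\|\omega(s)\|$ is continuous by axiom (III), and it is weakly continuous by the polarisation identity applied to the continuous functions $s\mapsto\langle\omega(s),\omega'(s)\rangle$. By axiom (I), $\Omega$ is a $C(\Delta)$-submodule, and for $\omega_1,\omega_2\in\Omega$ the continuous function $\langle\omega_1,\omega_2\rangle$ agrees with $s\mapsto\langle\omega_1(s),\omega_2(s)\rangle$ at \emph{every} point (no meagre exceptional set is needed, as was recorded after Lemma \ref{lemma:lowersemicontinuous}), so the inner product of $\Omegawk$ restricts to the original inner product of $\Omega$. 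Finally, $\Omega$ is closed in the uniform norm precisely by axiom (IV): if $\omega_n\in\Omega$ converge uniformly to some $\xi\in\Omegawk$, then for any $t_0$ and $\varepsilon>0$ one may take $U=\Delta$ and an $\omega_n$ with $\sup_{s}\|\omega_n(s)-\xi(s)\|<\varepsilon$, whence $\xi\in\Omega$. Therefore $\Omega$ is a closed submodule carrying the restricted inner product, that is, a C$^*$-submodule of $\Omegawk$.

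Every step is routine once the norm has been identified with the uniform norm; the only points that require genuine care are the preservation of weak continuity under the limit, handled by the uniform Cauchy--Schwarz estimate above, and the invocation of axiom (IV) to close $\Omega$. I expect the former to be the real content of the argument, since it is there that one must pass continuity of $s\mapsto\langle\nu_n(s),\omega(s)\rangle$ through the limit using only weak (rather than norm) data about the fields.
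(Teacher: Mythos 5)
Your proof is correct and follows essentially the same route as the paper: both use the identity $\|\nu\|^2=\sup_{s\in\Delta}\|\nu(s)\|^2$ from \eqref{e:norm} to reduce completeness to uniform convergence of pointwise limits, both pass weak continuity through the limit via the Cauchy--Schwarz estimate (the paper phrases this as the functions $g_{i,\omega}$ forming a Cauchy sequence in $C(\Delta)$), and both handle $\Omega$ by invoking axiom (IV) on a uniform (hence locally uniform) limit.
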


\begin{proof} The only Hilbert C$^*$-module axiom that is not obviously satisfied
by $\Omegawk$ is the axiom of completeness.
Let $\{\nu_i\}_{i\in\mathbb N}$ be a Cauchy sequence in
$\Omegawk$. By the equality \eqref{e:norm}, $\{\nu_i(s)\}_{i\in\mathbb N}$ is a
Cauchy sequence in $H_s$ for every $s\in\Delta$. Let $\nu(s)\in
H_s$ denote the limit of this sequence so that
$\nu:\Delta\rightarrow\bigsqcup_{s\in\Delta}H_s$ is a vector field.

Choose $\omega\in\Omega$ and consider the function
$g_{i,\omega}\in C(\Delta)$ given by $g_{i,\omega}(s)=\langle
\omega(s),\nu_i(s)\rangle$. Let $\varepsilon>0$. Then there is
$N_\varepsilon\in\mathbb N$ such that
$\|\nu_i-\nu_j\|<\varepsilon$, for all $i,j\ge N_\varepsilon$.
Therefore, the Cauchy-Schwarz inequality yields
\[
\sup_{s\in\Delta}\,|g_{i,\omega}(s)-g_{j,\omega}(s)|\,<\,\varepsilon\,\|\omega\|\,,\quad\forall\,i,j\ge N_\varepsilon\,.
\]
Thus, the sequence $\{g_{i,\omega}\}_i$ is Cauchy in $C(\Delta)$; let
$g_\omega\in C(\Delta)$ denote its limit.
Observe that
 $g_\omega(s)=\lim_i\langle\nu_i(s),\omega(s)\rangle=\langle\nu(s),\omega(s)\rangle$, for all $s\in\Delta$. As the
choice of $\omega\in\Omega$ is arbitrary, this shows that $\nu$ is weakly continuous.
The Cauchy sequence $\{\nu_i\}_{i\in\mathbb N}$ is necessarily uniformly bounded by,
say, $\rho>0$,
and then
$\|\nu(s)\|\le \rho$
for every $s\in\Delta$. That is, the function $s\rightarrow\|\nu(s)\|$ is bounded and so
$\nu\in\Omegawk$. Finally, if $i,j\ge N_\varepsilon$, then for any $s\in\Delta$ we have $\|\nu(s)-\nu_i(s)\|
\le\|\nu(s)-\nu_j(s)\|+\|\nu_j(s)-\nu_i(s)\|\le \|\nu(s)-\nu_j(s)\|+\varepsilon$, and so letting $j\rightarrow \infty$ yields
$\|\nu(s)-\nu_i(s)\|\le\varepsilon$ for every $s\in\Delta$. That is, $\|\nu-\nu_i\|\rightarrow 0$,
which proves that $\Omegawk$ is complete.

For the case of $\Omega$,
let $\{\omega_n\}_{n\in\mathbb N}$ be a Cauchy sequence in $\Omega$. For each
$s\in\Delta$, $\{\omega_n(s)\}_{n\in\mathbb N}$ is a Cauchy sequence in $H_s$; let
$\omega(s)$ denote the limit.
Since the limit is uniform, it is in particular locally uniform, and so $\omega\in\Omega$.
Hence, $\Omega$ is complete.
\end{proof}

\begin{definition}\label{definition:Kaplansky-Hilbert module}
A Hilbert C$^*$-module $E$ over a $C^*$-algebra $B$ is called
a {\em Kaplansky--Hilbert module} if in addition
$B$ is an abelian AW$^*$-algebra and
the following three
properties hold \cite[p.~842]{kaplansky1953} (Kaplansky's original term for such a module was
``faithful AW$^*$-module''):
\begin{enumerate}
  \item\label{definition:Kaplansky-Hilbert module:1} if $e_i\cdot \nu=0$ for some family $\{e_i\}_i\subset B$ of pairwise-orthogonal
                projections and $\nu\in E$, then also $e\cdot\nu=0$,
               where $e=\sup_i\,e_i$;
  \item\label{definition:Kaplansky-Hilbert module:2} if
$\{e_i\}_i\subset B$ is a family of pairwise-orthogonal
projections such that $1=\sup_i\,e_i$, and if $\{\nu_i\}_i\subset
E$ is a bounded family, then there is a $\nu\in E$
such that $e_i\cdot\nu=e_i\cdot\nu_i$ for all $i$;
  \item if $\nu\in E$, then $g\cdot\nu=0$ for all $g\in B$ only if $\nu=0$.
\end{enumerate}
\end{definition}

\begin{remark}\label{remark:about the sum notation}
The element $\nu\in E$ obtained in the situation described in (ii) will sometimes be denoted as
$\sum_ie_i\nu_i$. It should be emphasized that this is not a pointwise sum.
\end{remark}

\begin{theorem}\label{theorem:aw-mod1}
$\Omegawk$ is a Kaplansky--Hilbert module over $C(\Delta)$.
\end{theorem}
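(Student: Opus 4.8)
The module $\Omegawk$ has already been shown to be a Hilbert C$^*$-module over the abelian AW$^*$-algebra $C(\Delta)$, so the plan is simply to verify the three defining conditions of Definition~\ref{definition:Kaplansky-Hilbert module}. The third is immediate: since $\Delta$ is compact, $C(\Delta)$ is unital and $1\cdot\nu=\nu$, so $g\cdot\nu=0$ for all $g\in C(\Delta)$ already forces $\nu=0$. Throughout I would use the fact that the projections of $C(\Delta)$ are exactly the characteristic functions $\chi_K$ of clopen sets $K\subseteq\Delta$, that a family $\{e_i\}=\{\chi_{K_i}\}$ is pairwise orthogonal precisely when the $K_i$ are pairwise disjoint, and that its supremum in $C(\Delta)$ is $\chi_K$ with $K=\overline{\bigcup_iK_i}$, a clopen set by extremal disconnectedness.

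For condition (i), suppose $e_i\cdot\nu=0$ for all $i$. Since $(e_i\cdot\nu)(s)=\chi_{K_i}(s)\nu(s)$, this says $\nu$ vanishes on each $K_i$, hence on the open set $U=\bigcup_iK_i$. I would then pass to $\langle\nu,\nu\rangle\in C(\Delta)$: by Lemma~\ref{lemma:lowersemicontinuous} it agrees with $s\mapsto\|\nu(s)\|^2$ off a meagre set $M$, so $\langle\nu,\nu\rangle$ vanishes on the dense subset $U\setminus M$ of $U$ and therefore, being continuous, on $\overline U=K$. As $\langle\nu,\nu\rangle^{1/2}(s)\ge\|\nu(s)\|$ pointwise, it follows that $\nu(s)=0$ for every $s\in K$, that is, $e\cdot\nu=\chi_K\cdot\nu=0$, as required.

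The real work is condition (ii), which I expect to be the main obstacle. Given pairwise orthogonal $\{e_i\}=\{\chi_{K_i}\}$ with $\sup_ie_i=1$ (equivalently $U=\bigcup_iK_i$ dense) and a bounded family $\{\nu_i\}\subset\Omegawk$ with $\rho=\sup_i\|\nu_i\|<\infty$, I would first patch on the clopen pieces, defining $\nu(s)=\nu_i(s)$ for $s\in K_i$; this is well defined and bounded by $\rho$ on $U$, and weakly continuous there, since on each clopen $K_i$ the field agrees with $\nu_i$ and continuity is local. The difficulty is to extend $\nu$ across the closed nowhere-dense boundary $\Delta\setminus U$ so that the extension remains weakly continuous and bounded. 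For each $\omega\in\Omega$ the scalar function $g_\omega(s)=\langle\nu(s),\omega(s)\rangle$ is bounded and continuous on the dense open set $U$, and applying \cite[Proposition III.1.7]{Takesaki-bookI} to the lower-semicontinuous regularisations of its real and imaginary parts produces a continuous $\hat g_\omega\in C(\Delta)$ extending $g_\omega|_U$.

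It then remains to recover, at each boundary point $s_0$, a single vector $\nu(s_0)\in H_{s_0}$ realising all of these extended values simultaneously. The plan is to observe that $\omega(s_0)\mapsto\hat g_\omega(s_0)$ is a well-defined bounded conjugate-linear functional on the dense subspace $\{\omega(s_0):\omega\in\Omega\}=H_{s_0}$ (axiom (II)): it depends only on $\omega(s_0)$, because any $\omega\in\Omega$ with $\omega(s_0)=0$ satisfies $|g_\omega(s)|\le\rho\,\|\omega(s)\|\to\rho\,\|\omega(s_0)\|=0$ by continuity of $s\mapsto\|\omega(s)\|$, forcing $\hat g_\omega(s_0)=0$; and the same Cauchy--Schwarz estimate gives the bound $|\hat g_\omega(s_0)|\le\rho\,\|\omega(s_0)\|$. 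Riesz representation then yields $\nu(s_0)\in H_{s_0}$ with $\|\nu(s_0)\|\le\rho$ and $\langle\nu(s_0),\omega(s_0)\rangle=\hat g_\omega(s_0)$ for all $\omega$. With $\nu$ so defined on all of $\Delta$, each $s\mapsto\langle\nu(s),\omega(s)\rangle$ equals the continuous function $\hat g_\omega$, so $\nu\in\Omegawk$; and since $\nu(s)=\nu_i(s)$ on $K_i$ we obtain $e_i\cdot\nu=e_i\cdot\nu_i$ for all $i$, completing the verification and hence the proof that $\Omegawk$ is a Kaplansky--Hilbert module.
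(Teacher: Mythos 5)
Your proposal is correct, and its skeleton coincides with the paper's: one verifies the three axioms of Definition \ref{definition:Kaplansky-Hilbert module} directly, all the substance lying in axiom (ii), where inner-product data on the dense open set $U=\bigcup_iK_i$ must be extended continuously to all of $\Delta$ and the vector $\nu(s)$ recovered fibrewise by Riesz representation. The genuine difference is how the extension step is justified. The paper produces, for each $\omega\in\Omega$, the unique $f_\omega\in C(\Delta)$ with $e_if_\omega=e_i\langle\omega,\nu_i\rangle$ by appealing to the fact that $\Delta$ is the Stone--\v Cech compactification of its dense open subset $U$; you instead patch $\nu$ on $U$, take liminf (lower semicontinuous) regularisations of $\re g_\omega$ and $\im g_\omega$, and feed them to the Takesaki meagre-set proposition already underlying Lemma \ref{lemma:lowersemicontinuous}. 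Your route thus recycles the one Stonean-space fact the paper uses elsewhere and avoids invoking the C$^*$-embedding property of dense open subsets of Stonean spaces, which is a real economy; the cost is a handful of verifications a full write-up must contain: that the liminf regularisation is genuinely lower semicontinuous; that Takesaki's statement (for nonnegative functions) applies after adding a constant; that the continuous function it produces agrees with $g_\omega$ on all of $U$ (both are continuous there and agree on $U\setminus M$, which is dense in $U$ because $\Delta$ is Baire); and that $\omega\mapsto\hat g_\omega$ is conjugate-linear (check it on $U$, then extend by density), so that $\omega(s_0)\mapsto\hat g_\omega(s_0)$ really is a functional. Smaller deviations: you invoke Riesz representation only at points of $\Delta\setminus U$, where the paper uses it at every $s\in\Delta$; and your disposal of axiom (iii) via unitality of $C(\Delta)$ (take $g=1$) is simpler than the paper's argument from $\langle\nu,\nu\rangle\cdot\nu=0$. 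Axiom (i) is handled essentially as in the paper.
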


\begin{proof}
For property (i), assume that  $ \nu\in \Omegawk$ and
$\{e_i\}_i\subset C(\Delta)$ is a family of pairwise-orthogonal projections
with supremum $e\in C(\Delta)$ for which $e_i\cdot \nu=0$ for all $i$.
Because projections in $C(\Delta)$ are the characteristic functions of clopen sets,
there are pairwise-disjoint clopen sets $U_i\subset \Delta$ such that $e_i=\chi^{\phantom{a}}_{U_i}$. Thus,
for each $i$, using Lemma \ref{lemma:lowersemicontinuous},
\begin{align*}
0&=\|e_i\cdot\nu\|^2=\displaystyle\max_{s\in\Delta}\,\langle e_i\cdot\nu,e_i\cdot\nu\rangle(s)
=\sup_{s\in\Delta}\langle e_i(s)\nu(s),e_i(s)\nu(s)\rangle\\
&=
\displaystyle\max_{s\in\Delta}\,e_i(s)\,\left[\langle \nu, \nu\rangle(s) \right]
=\displaystyle\max_{s\in U_i}\,\langle \nu, \nu\rangle(s)\,,
\end{align*}
and so $\langle\nu,\nu\rangle(s)=0$ for every $s\in U_i$.
Let $U=\bigcup_iU_i$.
The set $\overline U$ is clopen and $\chi^{\phantom{a}}_{\overline U}
=\sup_i\,e_i=e$ \cite[\S 8]{Berberian-book}. As $\langle\nu,\nu\rangle$ is a continuous
function that vanishes on $U$, it also vanishes on $\overline U$. Hence,
\[
\|e\cdot\nu\|^2\,=\,\displaystyle\max_{s\in\Delta}\,e(s)\left[\langle \nu, \nu\rangle(s) \right]\,=\,
\displaystyle\max_{s\in\overline U}\,\langle \nu, \nu\rangle(s)\,=\,0\,,
\]
which yields property (i).

For the proof of property (ii), assume that $\{e_i\}_i\subset
C(\Delta)$ is a family of pairwise-orthogonal projections such
that $1=\sup_i\,e_i$ and that $\{ \nu_i\}_i\subset  \Omegawk$ is a
family such that $K=\sup\|\nu_i\|<\infty$; we aim to
prove that there is a $\nu\in\Omegawk$ such that $e_i\cdot
\nu=e_i\cdot  \nu_i$ for all $i$. As before,
assume that $e_i=\chi^{\phantom{a}}_{U_i}$ and $U=\bigcup_iU_i$. Then $1=\sup_i\,e_i$
implies that $\overline U=\Delta$.

For each $\omega\in\Omega$, consider the unique function
$f_\omega\in C(\Delta)$  such that $e_i\,f_\omega=e_i\,\langle
\omega,\nu_i\rangle$ for all $i$ (its existence guaranteed by the fact that $\Delta$ is the
Stone--\v Cech compactification of $U$). Note that for $s\in U_i$ we have
that $f_\omega(s)=\langle \omega(s),\nu_i(s)\rangle$. Hence,
$|f_\omega(s)|\leq K\,\|\omega(s)\|\, $ for $s\in U$; the same inequality holds
for all $s\in \Delta$ because $\overline U=\Delta$
and both sides of the inequality are continuous functions of $s$.
Moreover, if $\omega_1,\, \omega_2\in \Omega$ and $\alpha\in
\mathbb C$ then, for $s\in U$ we get that
$f_{\alpha\,\omega_1+\omega_2}(s)=\alpha \,
f_{\omega_1}(s)+f_{\omega_2}(s)$ and, therefore, that
$f_{\alpha\,\omega_1+\omega_2}=\alpha \,
f_{\omega_1}+f_{\omega_2}$. Thus, for each $s\in \Delta$ the
function $\omega(s)\mapsto f_\omega(s)$ is a well-defined, bounded linear functional on
$H_s$. Let $\nu(s)\in H_s$ be the representing vector for this
functional, yielding a vector field
$\nu:\Delta\rightarrow\bigsqcup_{s\in\Delta}H_s$. Since
$\langle\nu(s),\omega(s)\rangle=\overline{f_\omega(s)}$, for every
$\omega\in\Omega$, $\nu$ is weakly continuous. It remains to show
that $\nu$ is a bounded vector field. If $s\in U$,
\[
\|\nu(s)\|=\sup_{\omega\in\Omega , \|\omega(s)\|=1}|\langle \omega(s), \nu(s)\rangle|=
\sup_{\omega\in\Omega , \|\omega(s)\|=1}|f_\omega(s)|\leq \sup_i\|\nu_i\|=K\, ,
\]
which shows that $\|\nu(s)\|$ is uniformly bounded on $U$. Thus, since $U$ is dense,
the lower semicontinuous function $s\mapsto\|\nu(s)\|^2$ is bounded on $\Delta$.
Therefore, $\nu\in\Omegawk$.

Now we show that $e_i\cdot\nu=e_i\cdot \nu_i$, for all $i$. Fix $i$ and
$s\in U_i$ and consider $\omega\in\Omega$. Then,
\begin{align*}
\langle \omega(s),\,e_i(s)\,\nu(s)\rangle &= \langle \omega(s),\,\nu(s)\rangle= f_\omega(s)\\
&=e_i(s)\,f_\omega(s)=e_i(s) \langle \omega(s),\nu_i(s)\rangle \\
&= \langle \omega(s),e_i(s)\,\nu_i(s)\rangle\,.
\end{align*}
Since $(e_i\cdot\nu)(s)=0=(e_i\cdot \nu_i$)(s) for $s\in \Delta\setminus U_i$ we conclude that $e_i\cdot\nu=e_i\cdot \nu_i$.

For the proof of property (iii), assume that $ \nu\in \Omegawk$ satisfies $g\cdot \nu=0$ for all $g\in C(\Delta)$.
Then, in particular, $\langle\nu,\nu\rangle\cdot \nu=0$, so $\langle\nu,\nu\rangle=0$.
Hence, from $\| \nu\|=\|\langle\nu,\nu\rangle\|^{1/2}=0$ we conclude that $\nu=0$.
\end{proof}

\section{Endomorphisms of $\Omega$ and $\Omegawk$}\label{section:injective envelops}
Throughout this section $A$ will denote the Fell C$^*$-algebra of the continuous
Hilbert bundle $(\Delta,\{H_s\}_{s\in \Delta},\Omega)$, as
described in Definition \ref{definition:Fell algebra}, with $\Delta$ Stonean.
Let $B(\Omega)$ and $B(\Omegawk)$ denote, respectively, the
C$^*$-algebras of adjointable $C(\Delta)$-endomorphisms of
$\Omega$ and $\Omegawk$. Since, by Theorem \ref{theorem:aw-mod1}, $\Omegawk$ is a
Kaplansky--Hilbert AW$^*$-module over $C(\Delta)$,
$B( \Omegawk)$ coincides with the set of all $C(\Delta)$-endomorphisms of
$\Omegawk$ \cite[Theorem 6]{kaplansky1953} and is a type I
AW$^*$-algebra with centre $C(\Delta)$ \cite[Theorem
7]{kaplansky1953}.

In the particular case where $\Omega$ is given by
the trivial Hilbert bundle $(\Delta,\{H\}_{s\in \Delta},C(\Delta,H))$
with $H$ is a fixed Hilbert space, Hamana \cite{hamana-tensor}
proved that $B(\Omegawk)\cong C(\Delta)\overline\otimes B(H)$,  the
monotone complete tensor product of $C(\Delta)$ and $B(H)$.

For each $ \nu_1, \nu_2\in \Omegawk$, consider the endomorphism
$\Theta_{ \nu_1, \nu_2}$ on $ \Omegawk$ defined by
\[
\Theta_{ \nu_1, \nu_2}\,( \nu)\,=\,\langle  \nu,  \nu_2\rangle\cdot \nu_1\,,\quad \nu\in  \Omegawk\,.
\]
For a Hilbert bundle $\Omega_0$, let
\[
F(\Omega_0)=\left\{\sum_{j=1}^n \Theta_{\omega_j,\omega_j'}\,:\,n\in\NN,\,\omega_j,\omega_j'\in \Omega\right\}.
\]
We will consider both $F(\Omega)$ and $F(\Omegawk)$.

If $\omega_1,\omega_2\in\Omega$, then $\Theta_{\omega_1,\omega_2}(\omega)\in\Omega$
for all $\omega\in\Omega$, and so
$F(\Omega)\subset B(\Omega)$. In fact, $F(\Omega)$ and $F(\Omegawk)$ are algebraic ideals in $B(\Omega)$ and
$B(\Omegawk)$ respectively. The norm-closures of these algebraic ideals,
namely $K(\Omega)$ and $K(\Omegawk)$,
are essential ideals in each of $B(\Omega)$ and $B(\Omegawk)$---called the ideals of
compact endomorphisms---and the multiplier algebras of $K(\Omega)$ and $K(\Omegawk)$
are, respectively, $B(\Omega)$ and $B(\Omegawk)$ \cite{Lance-book}.

When referring to rank-1 operators $x$ acting on a Hilbert space
$H$, we will use the notation $x=\xi\otimes\eta$ for such an
operator---the action on $\gamma\in H$ given by
$\gamma\mapsto\langle\gamma,\eta\rangle\,\xi$---and we reserve the
notation $\Theta_{ \xi, \eta}$ for ``rank-1'' operators acting on
a Hilbert module.

The term ``homomorphism'' will be used to mean a $*$-homomorphism
between C$^*$-algebras.

For any C$^*$-algebra $B$, we denote the injective envelope \cite{hamana1979a},
\cite[Chapter 15]{Paulsen-book} of $B$ by $I(B)$ (and we consider $I(B)$ as a C$^*$-algebra rather than as an
operator system).

The main result of the present section is the following.

\begin{theorem}\label{theorem:inclusion}
There exist C$^*$-algebra embeddings such that
\begin{equation}\label{theorem:inclusion:1}
K(\Omega)\,\subset\,A\,\subset\,B(\Omega)\,\subset\,B(\Omegawk)\,=\,I(K(\Omega))\,.
\end{equation} In particular, $I(K(\Omega))=I(A)=I(B(\Omega))=B(\Omegawk)$.
\end{theorem}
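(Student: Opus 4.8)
The plan is to establish the chain of inclusions \eqref{theorem:inclusion:1} and then identify the outermost algebra $B(\Omegawk)$ as the injective envelope of the innermost $K(\Omega)$; the ``In particular'' statement then follows formally from the general principle that injective envelopes are preserved under passage to essentially-containing C$^*$-algebras. First I would construct the four embeddings. The inclusion $K(\Omega)\subset A$ should follow because the almost-finite-dimensional weakly-continuous operator fields that constitute $A$ include, and are norm-approximated by, the compact endomorphisms built from the rank-one operators $\Theta_{\omega_1,\omega_2}$; each such $\Theta_{\omega_1,\omega_2}$ acts fibrewise as the genuine rank-one operator $\omega_1(s)\otimes\omega_2(s)$ on $H_s$, which is patently weakly continuous and almost finite-dimensional, so $F(\Omega)\subset A$ and hence $K(\Omega)=\overline{F(\Omega)}\subset A$. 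The inclusion $A\subset B(\Omega)$ comes from letting each $a\in A$ act by pointwise application $(a\cdot\omega)(s)=a(s)\omega(s)$; one checks this lands back in $\Omega$ using the local-uniform-approximation axiom (IV) together with the almost-finite-dimensional structure, and that the field $s\mapsto a(s)^*$ supplies the adjoint.

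Next I would handle $B(\Omega)\subset B(\Omegawk)$. Since $\Omega$ is a C$^*$-submodule of $\Omegawk$ with $\Omega^\perp=\{0\}$ (established in Section \ref{section:omegawk}), and $\Omegawk$ is a Kaplansky--Hilbert module, every adjointable endomorphism of $\Omega$ ought to extend uniquely to an adjointable endomorphism of $\Omegawk$. The natural mechanism is that $\Omegawk$ is a kind of ``weak completion'' of $\Omega$: an element of $\Omegawk$ can be produced from a bounded family over a partition of unity (property (ii) of the Kaplansky--Hilbert structure), and a given $T\in B(\Omega)$ extends by continuity/adjointability to act on such weak limits. The fact that $B(\Omegawk)$ is exactly the multiplier algebra of the essential ideal $K(\Omegawk)$, and likewise $B(\Omega)=M(K(\Omega))$, should make these extensions canonical and injective.

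The crux of the theorem is the equality $B(\Omegawk)=I(K(\Omega))$, and this is where the Kaplansky--Hilbert structure from Theorem \ref{theorem:aw-mod1} does the heavy lifting. The plan is to show first that $B(\Omegawk)$ is injective as a C$^*$-algebra: by Kaplansky's results cited just after Theorem \ref{theorem:aw-mod1}, $B(\Omegawk)$ is a type I AW$^*$-algebra with abelian centre $C(\Delta)$, and since $\Delta$ is Stonean, $C(\Delta)$ is an injective (hence monotone complete) commutative C$^*$-algebra; a type I AW$^*$-algebra whose centre is injective is itself injective in the category of operator systems. Second, I would verify that $K(\Omega)$ sits inside $B(\Omegawk)$ \emph{essentially}, i.e. that $B(\Omegawk)$ is an essential (rigid) extension of $K(\Omega)$, so that the minimality clause in the definition of the injective envelope is met. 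Essentiality should follow from $\Omega^\perp=\{0\}$ in $\Omegawk$: any nonzero central or two-sided structure surviving in $B(\Omegawk)$ must already be detected on the dense submodule $\Omega$, so no proper injective C$^*$-algebra can lie strictly between $K(\Omega)$ and $B(\Omegawk)$. I expect this \emph{essentiality/rigidity} verification to be the main obstacle, since injectivity alone gives only that $B(\Omegawk)$ \emph{contains} a copy of $I(K(\Omega))$, and pinning down the minimality requires genuinely exploiting how $\Omegawk$ is generated from $\Omega$ over $C(\Delta)$.

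Finally, the ``In particular'' conclusion $I(K(\Omega))=I(A)=I(B(\Omega))=B(\Omegawk)$ follows without further work. Because the injective envelope is unique up to isomorphism and is characterised as the minimal injective extension, any C$^*$-algebra $B$ with $K(\Omega)\subset B\subset I(K(\Omega))$ in which $K(\Omega)$ sits essentially has the same injective envelope, namely $I(K(\Omega))=B(\Omegawk)$. Since \eqref{theorem:inclusion:1} exhibits both $A$ and $B(\Omega)$ as such intermediate algebras (with $K(\Omega)$ an essential ideal of each), we immediately obtain $I(A)=I(B(\Omega))=I(K(\Omega))=B(\Omegawk)$, completing the proof.
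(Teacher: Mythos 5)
Your overall strategy matches the paper's: construct the embeddings fibrewise, invoke injectivity of the type I AW$^*$-algebra $B(\Omegawk)$, and then verify Hamana's minimality/rigidity condition for the inclusion $K(\Omega)\subset B(\Omegawk)$. But at the crux --- the step you yourself flag as ``the main obstacle'' --- your proposal stops at an assertion rather than an argument, and the assertion is not even the right condition to verify. Rigidity means: every unital completely positive map $\phi:B(\Omegawk)\rightarrow B(\Omegawk)$ with $\phi|_{K(\Omega)}=\mbox{id}_{K(\Omega)}$ equals $\mbox{id}_{B(\Omegawk)}$. This is a statement about ucp maps, not about intermediate subalgebras; your substitute (``no proper injective C$^*$-algebra lies strictly between $K(\Omega)$ and $B(\Omegawk)$'') is weaker, because the copy of $I(K(\Omega))$ that injectivity produces inside $B(\Omegawk)$ is the image of a minimal $K(\Omega)$-projection carrying a Choi--Effros product, and need not be a C$^*$-subalgebra of $B(\Omegawk)$ at all; so minimality among intermediate injective C$^*$-subalgebras is not a criterion one can appeal to. And ``any structure surviving in $B(\Omegawk)$ must be detected on the dense submodule $\Omega$'' is a heuristic, not a proof of either formulation.

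What actually closes this gap in the paper is a concrete two-step argument. First, Lemma \ref{lemma:faithful compression}: if $z\in B(\Omegawk)$ and $\Theta_{\omega,\omega}z\Theta_{\mu,\mu}=0$ for all $\omega,\mu\in\Omega$, then $z=0$; this is proved by fixing $s$, choosing $\mu\in\Omega$ with $\mu(s)=z^*\omega(s)$ (axiom (II)), deducing $z^*\omega=0$, and only then invoking $\Omega^\perp=\{0\}$ --- this is where Lemma \ref{lemma:density}, which you correctly sense is relevant, genuinely enters. Second, for a ucp map $\phi$ fixing $K(\Omega)$, the multiplicative domain property gives $\phi(axb)=a\phi(x)b$ for $a,b\in K(\Omega)$ and $x\in B(\Omegawk)$; since $\Theta_{\omega,\omega}z\Theta_{\mu,\mu}=\Theta_{\langle z\mu,\omega\rangle\omega,\mu}\in K(\Omega)$, this yields $\Theta_{\omega,\omega}\bigl(z-\phi(z)\bigr)\Theta_{\mu,\mu}=0$, and the lemma forces $\phi(z)=z$. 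Without some such argument your proof is incomplete. A secondary, more repairable, vagueness: your extension of $B(\Omega)$ into $B(\Omegawk)$ ``by weak limits over partitions of unity'' is not how the paper proceeds (the approximation of elements of $\Omegawk$ by such sums is Proposition \ref{proposition:pegado2}, proved only later); instead, the paper shows every $b\in B(\Omega)$ is implemented by an operator field $\{b(s)\}_{s\in\Delta}$ with $\sup_s\|b(s)\|=\|b\|$, using Lemma \ref{lemma:vector field with fixed value}, and this fibrewise action manifestly preserves weak continuity and boundedness, hence defines the embedding $\vartheta:B(\Omega)\rightarrow B(\Omegawk)$.
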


The proof of Theorem \ref{theorem:inclusion} and a description of
the inclusions in \eqref{theorem:inclusion:1} begin with the
following set of results.

\begin{lemma}\label{lemma:action of A}For every $a\in A$ and $\omega\in\Omega$, the
vector field $a\cdot \omega$ defined by $a\cdot \omega(s)=a(s)\omega(s)$
is an element of $\Omega$.
\end{lemma}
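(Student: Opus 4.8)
The plan is to verify the three defining properties of $\Omega$ from Definition \ref{definition:continuous Hilbert bundle} for the vector field $a\cdot\omega$, where $a\in A$ and $\omega\in\Omega$. Since $a\cdot\omega$ is manifestly a vector field on $\Delta$ with $a\cdot\omega(s)\in H_s$ (as $a(s)\in K(H_s)$ maps $H_s$ into itself), the task is to show that $a\cdot\omega$ lies in the set $\Omega$. The natural strategy is to exploit the local approximation axiom (IV): I will show that near each point $s_0$, the field $a\cdot\omega$ can be approximated uniformly on a neighbourhood by a genuine element of $\Omega$, and then invoke closure of $\Omega$ under local uniform approximation.

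\emph{First} I would fix $s_0\in\Delta$ and $\varepsilon>0$ and use the almost finite-dimensionality of $a$ (with respect to $\Omega$) to produce an open set $U\ni s_0$ and fields $\omega_1,\dots,\omega_n\in\Omega$ that are linearly independent on $U$, with $\|p_s a(s)p_s - a(s)\|<\varepsilon$ for $s\in U$, where $p_s$ is the projection onto $\mathrm{Span}\{\omega_j(s)\}$. \emph{Next} I would apply \cite[Lemma 4.2]{fell1961} (used already in the proof of Lemma \ref{lemma:lowersemicontinuous}) to replace $\omega_1,\dots,\omega_n$ by fields that are \emph{orthonormal} at each point of a possibly smaller open set $V\ni s_0$; this is convenient because it makes the projection $p_s$ explicit as $p_s = \sum_j \omega_j(s)\otimes\omega_j(s)$. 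The idea is then to define the candidate approximating field $\xi(s) = p_s\,a(s)\,\omega(s)$ on $V$ and show two things: that $\xi$ agrees locally with an element of $\Omega$, and that $\xi$ approximates $a\cdot\omega$ uniformly on $V$ up to $\varepsilon\|\omega\|$.

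\emph{The key computation} is that on $V$ the field $\xi$ is an explicit $\Omega$-combination: writing $c_{jk}(s)=\langle a(s)\omega_k(s),\omega_j(s)\rangle$, one has $p_s a(s) p_s = \sum_{j,k} c_{jk}(s)\,\omega_j(s)\otimes\omega_k(s)$, so that
\[
p_s\,a(s)\,p_s\,\omega(s) \;=\; \sum_{j,k} c_{jk}(s)\,\langle\omega(s),\omega_k(s)\rangle\,\omega_j(s).
\]
Here each $c_{jk}$ is continuous because $a$ is weakly continuous (with respect to $\Omega$) and each $\langle\omega,\omega_k\rangle$ is continuous by axiom (III) and polarisation; hence the right-hand side is a $C(V)$-combination of the $\omega_j\in\Omega$ and therefore restricts to an element of $\Omega$ on $V$. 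Since $\|p_s a(s)p_s\,\omega(s) - a(s)\omega(s)\| \le \|p_s a(s)p_s - a(s)\|\,\|\omega(s)\| < \varepsilon\|\omega\|$ on $V$, the field $a\cdot\omega$ is locally uniformly approximated by elements of $\Omega$ at $s_0$. As $s_0$ and $\varepsilon$ were arbitrary, axiom (IV) gives $a\cdot\omega\in\Omega$.

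\emph{I expect the main obstacle} to be the bookkeeping around the projection $p_s$: one must be careful that $p_s a(s) p_s$ is only an $\varepsilon$-approximation of $a(s)$, not equal to it, and that the span is spanned by the \emph{pointwise} values $\omega_j(s)$, so the scalar coefficients $c_{jk}(s)$ and $\langle\omega(s),\omega_k(s)\rangle$ must be recognised as continuous functions of $s$ in order to conclude membership in $\Omega$ via the module axiom (I). A minor technical point is ensuring the $p_t a(t) p_t$ estimate from almost finite-dimensionality combines cleanly with the orthonormalisation from \cite[Lemma 4.2]{fell1961}; passing to the smaller neighbourhood $V\subset U$ handles this. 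Once these continuity facts are in hand, the argument closes immediately via the local-approximation axiom.
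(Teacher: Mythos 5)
Your proof is correct, but it follows a genuinely different route from the paper's. The paper never touches almost finite-dimensionality: for each $s_0$ it picks, via axiom (II), a field $\mu\in\Omega$ with $\mu(s_0)=a(s_0)\omega(s_0)$, and observes that
\[
\|a(s)\omega(s)-\mu(s)\|^2=\|a(s)\omega(s)\|^2+\|\mu(s)\|^2-2\re\langle a(s)\omega(s),\mu(s)\rangle
\]
is continuous on $\Delta$ and vanishes at $s_0$, whence $a\cdot\omega$ is uniformly close to $\mu$ on a neighbourhood and axiom (IV) applies; the continuity of the first term rests on $a^*a\in A$ being weakly continuous, i.e.\ on the fact that $A$ is a $*$-algebra. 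Your argument instead works entirely from the defining properties of the single element $a$ (weak continuity plus almost finite-dimensionality), paying for this with the orthonormalisation lemma \cite[Lemma 4.2]{fell1961} and the matrix expansion $p_sa(s)p_s=\sum_{j,k}c_{jk}(s)\,\omega_j(s)\otimes\omega_k(s)$, so that the approximant $\eta=\sum_{j,k}\bigl(c_{jk}\langle\omega,\omega_k\rangle\bigr)\cdot\omega_j$ lies in $\Omega$ by axiom (I). Two small points you should make explicit to close the argument: first, the orthonormalised fields must span, at every $s\in V$, the same subspace as the original linearly independent fields, so that $p_s=\sum_j\omega_j(s)\otimes\omega_j(s)$ is still the projection appearing in the almost-finite-dimensionality estimate on $V\subset U$ --- this holds because Fell's orthonormalisation is pointwise Gram--Schmidt, which preserves spans; second, since axiom (IV) asks for a globally defined element of $\Omega$ near $a\cdot\omega$ on a neighbourhood, you should note that the coefficients $c_{jk}$ and $\langle\omega,\omega_k\rangle$ are continuous on all of $\Delta$, so $\eta\in\Omega$ globally and agrees with $p_sa(s)p_s\,\omega(s)$ on $V$. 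With these remarks your proof is complete; it is computationally heavier than the paper's, but it has the virtue of not invoking the multiplicative ($*$-algebra) structure of $A$, using only what the definition of a single operator field in $A$ provides.
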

\begin{proof}
Let $a\in A$. Then $a^*a\in A_+$ and since all fields in $A$ are
weakly continuous, for every $\omega\in \Omega$ the map
$s\mapsto\|a(s)\omega(s)\|=\langle a^*a\cdot\omega(s),\omega(s)\rangle^{1/2}$ is continuous.

Suppose $s_0\in \Delta$ and $\varepsilon>0$. Because $H_{s_0}=\{\mu(s_0)\,:\,\mu\in\Omega\}$, there is a
$\mu\in\Omega$ such that $a(s_0)\omega(s_0)=\mu(s_0)$. Since
\[
\|a\cdot\omega(s)-\mu(s)\|^2\,=\,\|a(s)\omega(s)\|^2+\|\mu(s)\|^2-
2\re \langle a(s)\omega(s), \mu(s)\rangle
\]
is continuous on $\Delta$ and vanishes at $s_0$, there is an open set $U\subset\Delta$ containing $s_0$ such that
$\|a\cdot\omega(s)-\mu(s)\|<\varepsilon$ for all $s\in U$. As $\Omega$ is closed under local uniform approximation,
this proves that $a\cdot\omega\in\Omega$.
\end{proof}

\begin{proposition}\label{proposition:representation Lemma}  The map $\varrho:A\rightarrow B(\Omega)$
given by $\varrho(a)\omega=a\cdot\omega$, for $a\in A$ and $\omega\in \Omega$ is an
isometric homomorphism.
Furthermore, $K(\Omega)\subset \varrho(A)\subset B(\Omega)$ as C$^*$-algebras.
\end{proposition}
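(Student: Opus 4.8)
The plan is to verify in turn that $\varrho$ is well defined into $B(\Omega)$, that it is a $*$-homomorphism, that it is isometric, and then to establish the two inclusions. First I would check that $\varrho(a)$ is an adjointable $C(\Delta)$-endomorphism of $\Omega$. That $\varrho(a)$ maps $\Omega$ into $\Omega$ is exactly Lemma \ref{lemma:action of A}. The identities $\varrho(a)(f\cdot\omega)=f\cdot\varrho(a)\omega$, $\varrho(ab)=\varrho(a)\varrho(b)$, and linearity all hold fibrewise and so follow from the corresponding pointwise identities in each $K(H_s)$. The inner product on $\Omega$ is the honest pointwise one, so for $\omega_1,\omega_2\in\Omega$ one has
\[
\langle\varrho(a)\omega_1,\omega_2\rangle(s)=\langle a(s)\omega_1(s),\omega_2(s)\rangle=\langle\omega_1(s),a(s)^*\omega_2(s)\rangle=\langle\omega_1,\varrho(a^*)\omega_2\rangle(s)
\]
for every $s$; hence $\varrho(a)$ is adjointable with $\varrho(a)^*=\varrho(a^*)$, and since $\|\varrho(a)\omega\|^2=\sup_s\|a(s)\omega(s)\|^2\le\|a\|^2\|\omega\|^2$ it is bounded. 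Thus $\varrho$ is a $*$-homomorphism into $B(\Omega)$.

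For the isometry I would argue that $\varrho$ is injective and invoke the standard fact that an injective $*$-homomorphism of C$^*$-algebras is isometric. If $\varrho(a)=0$ then $a(s)\omega(s)=0$ for all $s\in\Delta$ and all $\omega\in\Omega$; since $\{\omega(s):\omega\in\Omega\}=H_s$ by axiom (II) of Definition \ref{definition:continuous Hilbert bundle}, this forces $a(s)=0$ for every $s$, i.e.\ $a=0$. (Alternatively one can prove $\|\varrho(a)\|=\|a\|$ directly: choosing $s_0$ with $\|a(s_0)\|=\|a\|$ and a near-maximising unit vector $\xi\in H_{s_0}$, lift $\xi$ to some $\omega\in\Omega$ and cut it down by the characteristic function of a small clopen neighbourhood of $s_0$ on which $\|\omega\|$ stays close to $1$, using that $\Delta$ is Stonean.)

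Because $\varrho$ is isometric and $A$ is complete, $\varrho(A)$ is a closed $*$-subalgebra of $B(\Omega)$, so the inclusion $\varrho(A)\subseteq B(\Omega)$ is immediate. For $K(\Omega)\subseteq\varrho(A)$ I would first observe that each rank-one endomorphism satisfies $\Theta_{\omega_1,\omega_2}=\varrho(a)$ for the operator field $a(s)=\omega_1(s)\otimes\omega_2(s)$, so it suffices to show $a\in A$; then $F(\Omega)\subseteq\varrho(A)$, and taking closures gives $K(\Omega)=\overline{F(\Omega)}\subseteq\varrho(A)$. Each $a(s)$ is a rank-one, hence compact, operator; weak continuity holds because $\langle a(s)\mu_1(s),\mu_2(s)\rangle=\langle\mu_1(s),\omega_2(s)\rangle\langle\omega_1(s),\mu_2(s)\rangle$ is a product of continuous functions; and $s\mapsto\|a(s)\|=\|\omega_1(s)\|\,\|\omega_2(s)\|$ is continuous, with vanishing at infinity automatic since $\Delta$ is compact.

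The main obstacle, and the only delicate point, is the almost-finite-dimensionality of $a$. I would handle it exactly as in the proof of Lemma \ref{lemma:lowersemicontinuous}: fix $s_0$, let $W=\mathrm{span}\{\omega_1(s_0),\omega_2(s_0)\}$, choose an orthonormal basis of $W$, lift it to fields in $\Omega$, and apply \cite[Lemma 4.2]{fell1961} to obtain $\eta_1,\dots,\eta_d\in\Omega$ that are orthonormal on a neighbourhood $V$ of $s_0$ and agree with the chosen basis at $s_0$. With $p_s$ the projection onto $\mathrm{span}\{\eta_1(s),\dots,\eta_d(s)\}$ one computes $p_s a(s)p_s=\zeta_1(s)\otimes\zeta_2(s)$ on $V$, where $\zeta_i=\sum_j\langle\omega_i,\eta_j\rangle\,\eta_j\in\Omega$ and $\zeta_i(s_0)=\omega_i(s_0)$. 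Since $\zeta_i-\omega_i\in\Omega$ vanishes at $s_0$, axiom (III) makes $s\mapsto\|\zeta_i(s)-\omega_i(s)\|$ continuous and zero at $s_0$, and the elementary rank-one estimate
\[
\|\zeta_1(s)\otimes\zeta_2(s)-\omega_1(s)\otimes\omega_2(s)\|\le\|\zeta_1(s)-\omega_1(s)\|\,\|\zeta_2(s)\|+\|\omega_1(s)\|\,\|\zeta_2(s)-\omega_2(s)\|
\]
then yields $\|p_s a(s)p_s-a(s)\|<\varepsilon$ near $s_0$ (the case $W=\{0\}$ being trivial, with $p_s=0$). This places $a$ in $A$ and completes the argument.
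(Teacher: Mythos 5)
Your proof is correct and follows essentially the same route as the paper's: isometry is reduced to injectivity, which is obtained from axiom (II) exactly as you do, and the inclusion $K(\Omega)\subset\varrho(A)$ is established by showing that each generator $\Theta_{\omega_1,\omega_2}$ is the image under $\varrho$ of the operator field $s\mapsto\omega_1(s)\otimes\omega_2(s)$, whose norm continuity ($\|\omega_1(s)\|\,\|\omega_2(s)\|$) and weak continuity (a product of continuous functions) are checked in the same way. The only difference is one of detail: the paper merely asserts that this field is almost finite-dimensional, whereas you verify it carefully via Fell's Lemma 4.2 and a rank-one perturbation estimate, which fills in a step the paper leaves implicit without changing the method.
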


\begin{proof} It is clear that $\varrho$ is a homomorphism, and so we only need to
verify that it is one-to-one. To this end, assume that $\varrho(a)=0$.
Thus, $a(s)\omega(s)=0$ for every $\omega\in\Omega$ and every
$s\in\Delta$.
Because $H_s=\{\omega(s)\,:\,\omega\in\Omega\}$, this implies
that $a(s)=0$ for all $s\in\Delta$, and so $a=0$.

To show $K(\Omega)\subset \varrho(A)\subset B(\Omega)$ as C$^*$-algebras, consider $\Theta_{
\omega_1, \omega_2}$ with $\omega_1,\omega_2\in\Omega$. The map $s\mapsto\|\Theta_{\omega_1(s),
\omega_2(s)}\|$ is continuous because $\|\Theta_{\omega_1(s),\omega_2(s)}\|$ $=\,\|\omega_1(s)\|\,
\|\omega_2(s)\|$. For any
$\eta_1,\eta_2\in\Omega$, the map
\[
\langle\Theta_{\omega_1,\omega_2}\cdot\eta_1,\eta_2\rangle(s)
=\langle\eta_1,\omega_2\rangle(s)\,\langle\omega_1,\eta_2\rangle(s)
=\langle\eta_1(s),\omega_2(s)\rangle\,\langle\omega_1(s),\eta_2(s)\rangle
\]
is continuous. So $\Theta_{\omega_1,\omega_2}$ is also finite dimensional and weakly continuous,
which shows that $\Theta_{ \omega_1, \omega_2}\in A$ and
$K(\Omega)\subset \varrho(A)$.
\end{proof}

\begin{lemma}\label{lemma:density} With respect to the inclusion
$\Omega\subset\Omegawk$, we have $\Omega^\perp =\{0\}$.
\end{lemma}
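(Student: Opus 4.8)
The plan is to reduce the module-theoretic assertion $\Omega^\perp=\{0\}$ to a fibrewise (pointwise) statement and then invoke the fullness of the fibres guaranteed by axiom (II) of Definition \ref{definition:continuous Hilbert bundle}. By definition of the orthogonal complement, an element $\nu\in\Omegawk$ lies in $\Omega^\perp$ precisely when $\langle\nu,\omega\rangle=0$ in $C(\Delta)$ for every $\omega\in\Omega$. So the task is to show that this annihilation condition forces $\nu$ to be the zero vector field.

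The key ingredient is the pointwise identity recorded immediately before this lemma: for $\nu\in\Omegawk$ and $\omega\in\Omega$ one has $\langle\nu,\omega\rangle(s)=\langle\nu(s),\omega(s)\rangle$ for \emph{every} $s\in\Delta$. It is essential that equality holds at every point of $\Delta$, and not merely off a meagre set as for a general pair in $\Omegawk$; the improvement comes exactly from the hypothesis $\omega\in\Omega$, since then $s\mapsto\langle\nu(s),\omega(s)\rangle$ is already continuous by the weak continuity of $\nu$ and therefore coincides with the $C(\Delta)$-valued inner product everywhere. Consequently, if $\nu\in\Omega^\perp$, then $\langle\nu(s),\omega(s)\rangle=0$ for every $s\in\Delta$ and every $\omega\in\Omega$.

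It then remains to fix $s\in\Delta$ and apply axiom (II), namely $\{\omega(s):\omega\in\Omega\}=H_s$. The vanishing of $\langle\nu(s),\omega(s)\rangle$ for all $\omega\in\Omega$ says precisely that $\nu(s)$ is orthogonal to every vector of $H_s$, whence $\nu(s)=0$. As $s\in\Delta$ was arbitrary, $\nu$ is the zero field, and therefore $\Omega^\perp=\{0\}$.

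I do not anticipate a genuine obstacle here: the entire content is the passage from the $C(\Delta)$-valued inner product to its fibrewise values. The only point demanding care is to use the everywhere-valid identity for pairs $(\nu,\omega)$ with $\omega\in\Omega$, rather than the weaker meagre-set identity available for arbitrary pairs in $\Omegawk$; once this is in hand, fibre-fullness closes the argument at once.
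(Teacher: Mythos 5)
Your proposal is correct and follows essentially the same route as the paper's proof: both use the everywhere-valid identity $\langle\nu,\omega\rangle(s)=\langle\nu(s),\omega(s)\rangle$ for $\omega\in\Omega$ to reduce to a fibrewise statement, and then invoke axiom (II) to conclude $\nu(s)=0$ for each $s$ (the paper phrases this as a contradiction by choosing $\omega$ with $\omega(s_0)=\nu(s_0)$, but the content is identical).
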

\begin{proof}
Let $\nu\in \Omegawk$ be such that $\langle \nu,\omega\rangle=0$,
for every $\omega\in \Omega$. That is, for every $\omega\in
\Omega$ and for every $s\in \Delta$, $\langle
\nu(s),\omega(s)\rangle=0$. If $\nu\neq 0$,
there exists $s_0\in \Delta$ such that $\nu(s_0)\neq 0$.
By axiom (II) in Definition \ref{definition:continuous Hilbert bundle},
there exists $\omega\in \Omega$ such that
$\omega(s_0)=\nu(s_0)$, in contradiction to $\langle
\nu(s_0),\omega(s_0)\rangle=0$.
\end{proof}

\begin{lemma}\label{lemma:vector field with fixed value}
If $t_0\in \Delta$ and $\xi\in H_{t_0}$, then there exists $\omega\in
\Omega$ such that $\omega(t_0)=\xi$ and $\|\omega\|=\|\xi\|$.
\end{lemma}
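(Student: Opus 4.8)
The plan is to start from the surjectivity axiom (II) of Definition \ref{definition:continuous Hilbert bundle}, which provides some $\mu\in\Omega$ with $\mu(t_0)=\xi$, and then to \emph{rescale} $\mu$ by a suitable continuous scalar function so as to force its norm down to $\|\xi\|$ without disturbing its value at $t_0$. Recall that on $\Omega$ the norm is $\|\omega\|=\sup_{s\in\Delta}\|\omega(s)\|$, and that this supremum is attained because $s\mapsto\|\omega(s)\|$ is continuous (axiom (III)) and $\Delta$ is compact. Any candidate $\omega$ with $\omega(t_0)=\xi$ automatically satisfies $\|\omega\|\ge\|\omega(t_0)\|=\|\xi\|$, so the only real content of the lemma is to produce a field attaining the value $\xi$ at $t_0$ whose norm does not \emph{exceed} $\|\xi\|$.

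If $\xi=0$ I would simply take $\omega=0$, so assume $r:=\|\xi\|>0$. The function $g(s)=\|\mu(s)\|$ is continuous with $g(t_0)=r$, and I would set
\[
f(s)=\frac{r}{\max\{r,\,g(s)\}}\,,\qquad s\in\Delta\,,
\]
which lies in $C(\Delta)$ since the denominator is continuous and bounded below by $r>0$. Then $f(t_0)=1$, and for every $s$ one has $|f(s)|\,g(s)\le r$: when $g(s)\le r$ the factor $f(s)=1$ gives $f(s)g(s)=g(s)\le r$, while when $g(s)>r$ the factor $f(s)=r/g(s)$ gives $f(s)g(s)=r$.

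Setting $\omega=f\cdot\mu$, which lies in $\Omega$ because $\Omega$ is a $C(\Delta)$-module (axiom (I)), I obtain $\omega(t_0)=f(t_0)\mu(t_0)=\xi$ and $\|\omega(s)\|=|f(s)|\,g(s)\le r$ for all $s$, whence $\|\omega\|\le r=\|\xi\|$. Combined with the automatic lower bound noted above, this yields $\|\omega\|=\|\xi\|$, as required.

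The argument is elementary and I do not anticipate a genuine obstacle; the only point requiring care is the simultaneous control of the value at $t_0$ and of the global norm. The ``$\max$'' truncation achieves exactly this balance: it equals $1$ at points where $\mu$ is already no larger than the target (in particular at $t_0$, where $g$ attains the value $r$) and damps $\mu$ precisely on the region where it overshoots. An alternative would be to take $f(s)=\min\{1,\,r/g(s)\}$ on the set where $g(s)>0$ and extend by continuity, but the $\max$-formulation avoids any case distinction about the zero set of $g$ and is cleaner to verify.
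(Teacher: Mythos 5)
Your proof is correct, and it differs from the paper's in a way worth noting. Both arguments start the same way: axiom (II) supplies $\mu\in\Omega$ with $\mu(t_0)=\xi$, and then one rescales by a continuous scalar function, using the $C(\Delta)$-module axiom (I) to stay inside $\Omega$. The paper, however, exploits the Stonean hypothesis: it fixes a \emph{clopen} neighbourhood $V$ of $t_0$ on which $\|\mu(\cdot)\|\ge\|\xi\|/2$, rescales by $\|\xi\|\,\|\mu(\cdot)\|^{-1}$ on $V$ to make the pointwise norm identically $\|\xi\|$ there, and extends the scalar function by $0$ off $V$ --- continuity of this extension is exactly where clopenness is used. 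Your truncation $f(s)=r/\max\{r,\|\mu(s)\|\}$ avoids any cutoff set entirely: it is globally continuous on any compact Hausdorff base space, needs no disconnectedness, and caps the norm at $r$ while leaving the value at $t_0$ untouched. So your argument is more elementary and strictly more general (it proves the lemma for an arbitrary compact Hausdorff $T$, not just Stonean $\Delta$), at the modest cost of producing a field whose norm function is merely bounded by $\|\xi\|$ rather than the paper's field, which is supported in a clopen set with constant norm $\|\xi\|$ there --- a sharper normal form that is occasionally convenient but not needed for this statement.
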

\begin{proof}
The case $\xi=0$ is trivial. So assume that $\|\xi\|>0$. Let $\omega'\in\Omega$ with $\omega'(t_0)=\xi$.
Fix a clopen neighbourhood $V$ of $t_0$ such that
$V\subset \{t\in T\,:\,\|\omega'(t)\|\,\ge\,\|\omega'(t_0)\|/2\}$.
Let $h'(\cdot)=\|\xi\|\cdot\|\omega'(\cdot)\|^{-1}\in C(V)$; then $h'$
extends to a continuous function $h\in C(\Delta)$ with $h|_{\Delta\setminus V}=0$.
It is now straightforward to show that $\omega=h\cdot \omega'\in \Omega$ has the desired properties.
\end{proof}

\begin{proposition}\label{proposition:b(Omega) in B(Omegawk)} There exists an isometric homomorphism
$\vartheta: B(\Omega)\rightarrow B(\Omegawk)$ such that for $a\in A,\ \nu\in \Omegawk$,
 \begin{equation}\label{equation:action of A on Omegawk}
(\vartheta(\varrho(a))\nu)(s)=a(s) \nu(s),\ \ s\in \Delta\,.
\end{equation}
\end{proposition}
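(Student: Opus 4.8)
The plan is to define $\vartheta(T)$, for $T\in B(\Omega)$, by the adjoint relation $\langle \vartheta(T)\nu,\omega\rangle = \langle \nu, T^*\omega\rangle$ for $\nu\in\Omegawk$ and $\omega\in\Omega$, and to exploit two structural facts so that this prescription yields a $*$-homomorphism into $B(\Omegawk)$. The first is Lemma \ref{lemma:density}: since $\Omega^\perp=\{0\}$ in $\Omegawk$, an element of $\Omegawk$ is determined by its inner products against $\Omega$, so the relation above pins down $\vartheta(T)\nu$ uniquely whenever it exists. The second is the identification, via Theorem \ref{theorem:aw-mod1} and \cite[Theorem 6]{kaplansky1953}, of $B(\Omegawk)$ with the set of all $C(\Delta)$-module endomorphisms of $\Omegawk$; consequently, once $\vartheta(T)$ is exhibited as a $C(\Delta)$-linear self-map of $\Omegawk$, its boundedness and adjointability come for free.

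First I would record a locality estimate for $B(\Omega)$, which I expect to be the main obstacle, since it is the step that converts an abstract adjointable module map into fibrewise behaviour: for $T\in B(\Omega)$, $\omega\in\Omega$ and $s_0\in\Delta$ one has $\|(T\omega)(s_0)\|\le \|T\|\,\|\omega(s_0)\|$. To see this, fix $\varepsilon>0$ and, using continuity of $s\mapsto\|\omega(s)\|$, choose a continuous $f$ with $f(s_0)=1$, $0\le f\le 1$, vanishing off the open set $\{s:\|\omega(s)\|<\|\omega(s_0)\|+\varepsilon\}$. Since $\Omega$ is a $C(\Delta)$-module, $f\cdot\omega\in\Omega$ with $\|f\cdot\omega\|\le \|\omega(s_0)\|+\varepsilon$, while $C(\Delta)$-linearity gives $(T(f\cdot\omega))(s_0)=f(s_0)(T\omega)(s_0)=(T\omega)(s_0)$; hence $\|(T\omega)(s_0)\|=\|(T(f\cdot\omega))(s_0)\|\le\|T(f\cdot\omega)\|\le\|T\|(\|\omega(s_0)\|+\varepsilon)$, and letting $\varepsilon\to0$ finishes it. In particular $\omega(s_0)=0$ forces $(T\omega)(s_0)=0$, which is the locality I need.

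With this in hand I would construct $\vartheta(T)\nu$ fibrewise, in the spirit of the proof of Theorem \ref{theorem:aw-mod1}(ii). For each $s$, the assignment $\omega(s)\mapsto \langle\nu(s),(T^*\omega)(s)\rangle$ is well defined on $H_s=\{\omega(s):\omega\in\Omega\}$ by the locality estimate applied to $T^*$ (two fields agreeing at $s$ give the same number), and it is a bounded conjugate-linear functional of norm at most $\|T\|\,\|\nu(s)\|$; let $\mu(s)\in H_s$ be its Riesz representative, so $\langle\mu(s),\omega(s)\rangle=\langle\nu(s),(T^*\omega)(s)\rangle$ for all $\omega\in\Omega$. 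Because $T^*\omega\in\Omega$, the right-hand side equals the continuous function $\langle\nu,T^*\omega\rangle(s)$, so $\mu$ is weakly continuous; and $\|\mu(s)\|\le\|T\|\,\|\nu(s)\|\le\|T\|\,\|\nu\|$ shows $\mu$ is bounded. Thus $\mu\in\Omegawk$, and setting $\vartheta(T)\nu=\mu$ yields the defining relation $\langle\vartheta(T)\nu,\omega\rangle=\langle\nu,T^*\omega\rangle$ for all $\omega\in\Omega$.

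It remains to check the algebraic properties, each of which reduces---via $\Omega^\perp=\{0\}$---to an identity of inner products against $\Omega$ that follows from the defining relation. Taking $\nu=\omega_0\in\Omega$ and using adjointability of $T$ on $\Omega$ shows $\vartheta(T)\omega_0=T\omega_0$, so $\vartheta$ extends each $T$ and is in particular injective. Complex-linearity in $T$, $C(\Delta)$-linearity of $\vartheta(T)$, and multiplicativity (from $\langle\vartheta(T_1)\vartheta(T_2)\nu,\omega\rangle=\langle\nu,T_2^*T_1^*\omega\rangle=\langle\vartheta(T_1T_2)\nu,\omega\rangle$) all follow this pattern; $C(\Delta)$-linearity together with \cite[Theorem 6]{kaplansky1953} places $\vartheta(T)$ in $B(\Omegawk)$. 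For the $*$-property I would compute, for $\omega\in\Omega$, that $\langle\vartheta(T)^*\nu,\omega\rangle=\langle\nu,\vartheta(T)\omega\rangle=\langle\nu,T\omega\rangle=\langle\vartheta(T^*)\nu,\omega\rangle$, whence $\vartheta(T)^*=\vartheta(T^*)$. An injective $*$-homomorphism of C$^*$-algebras is isometric, so $\vartheta$ is an isometric homomorphism. Finally, for $a\in A$ the adjoint of $\varrho(a)$ is $\varrho(a^*)$, so the defining relation reads $\langle(\vartheta(\varrho(a))\nu)(s),\omega(s)\rangle=\langle\nu(s),a(s)^*\omega(s)\rangle=\langle a(s)\nu(s),\omega(s)\rangle$ for every $\omega\in\Omega$; since $\{\omega(s):\omega\in\Omega\}=H_s$, this gives $(\vartheta(\varrho(a))\nu)(s)=a(s)\nu(s)$, as required.
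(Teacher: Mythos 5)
Your proof is correct. It shares with the paper's proof the one key insight that adjointable endomorphisms of $\Omega$ act fibrewise---both arguments hinge on the locality estimate $\|(T\omega)(s)\|\le\|T\|\,\|\omega(s)\|$---but the execution differs at two points. First, you obtain that estimate from a Urysohn function and $C(\Delta)$-linearity alone, whereas the paper derives it by writing $\|(b\,\omega)(s)\|$ as a supremum of $|\langle \omega(s),(b^*\eta)(s)\rangle|$ over unit fields $\eta$, which requires Lemma \ref{lemma:vector field with fixed value} and the adjoint; your version is the more elementary of the two. Second, the paper uses the estimate to construct an actual operator field $\{b(s)\}_{s\in\Delta}$ with $(b\,\omega)(s)=b(s)\omega(s)$ and $\sup_s\|b(s)\|=\|b\|$, and then defines $(\vartheta b\,\nu)(s)=b(s)\nu(s)$; you instead define $\vartheta(T)\nu$ fibrewise by Riesz representation from the relation $\langle\vartheta(T)\nu,\omega\rangle=\langle\nu,T^*\omega\rangle$, and you delegate boundedness and adjointability of $\vartheta(T)$ to \cite[Theorem 6]{kaplansky1953} (legitimately, since Theorem \ref{theorem:aw-mod1} makes $\Omegawk$ a Kaplansky--Hilbert module, as the paper itself notes at the start of the section) and isometry of $\vartheta$ to the fact that an injective $*$-homomorphism of C$^*$-algebras is isometric. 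The two constructions produce the same map---your $\mu(s)$ is exactly $b(s)\nu(s)$---and weak continuity of the image field is verified through the same identity $\langle\mu(s),\eta(s)\rangle=\langle\nu(s),(T^*\eta)(s)\rangle$ in both. What the paper's route buys is the explicit operator-field representation of every element of $B(\Omega)$ together with $\sup_s\|b(s)\|=\|b\|$, a byproduct it records immediately after the proof and reuses later (for instance in the proof of Theorem \ref{theorem:reduction theorem-2}); what your route buys is economy, since the hands-on norm computations are replaced by $\Omega^\perp=\{0\}$ (Lemma \ref{lemma:density}) and abstract module-theoretic facts.
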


\begin{proof} Assume that $b\in B(\Omega)$ and $\omega\in \Omega$, $s\in\Delta$.
By Lemma \ref{lemma:vector field with fixed value},
 \begin{align*}
 \|(b\,\omega)(s)\|&=
 \displaystyle\sup_{\xi\in H_s,\,\|\xi\|=1}|\langle (b\,\omega)(s),\xi\rangle| =
 \displaystyle\sup_{\eta\in \Omega,\,\|\eta\|=1}|\langle (b\,\omega)(s),\eta(s)\rangle| \\
 &=\sup_{\eta\in \Omega,\,\|\eta\|=1}|\langle b\,\omega,\eta\rangle(s)|
= \displaystyle\sup_{\eta\in \Omega,\,\|\eta\|=1}|\langle \omega(s),(b^*\eta)(s)\rangle|\\
&\leq \|\omega(s)\| \,\sup_{\eta\in \Omega,\,\|\eta\|=1}\|b^*\eta\|
\leq  \|\omega(s)\| \,\|b^*\|=\|\omega(s)\| \,\|b\|\,.
\end{align*}
Therefore the function $\omega(s)\mapsto (b\,\omega)(s)$ is well
defined and induces a bounded linear operator $b(s)\in B(H_s)$
such that $(b\,\omega)(s)=b(s)\,\omega(s)$, for $s\in \Delta$ and
$\omega\in \Omega$, with $\sup_{s\in \Delta}\|b(s)\|\leq \|b\|$.
Moreover,
\begin{align*}
\|b\|&=\sup_{\|\omega\|=1}\,\|b\cdot\omega\|=\sup_{\|\omega\|=1}\,\sup_s\,\|b\cdot\omega(s)\|
=\sup_{\|\omega\|=1}\,\sup_s\,\|b(s)\omega(s)\|\\
&\leq\sup_{\|\omega\|=1}\,\sup_s\,\|b(s)\|\,\|\omega(s)\|
\leq\sup_s\|b(s)\|\leq\|b\|\, ,
\end{align*}
and so $\sup_{s\in \Delta}\|b(s)\|= \|b\|$.
Suppose now that $\nu\in \Omegawk$ and $s\in \Delta$, and define a vector field $\vartheta b\nu$
by $(\vartheta b\,\nu)(s)=b(s)\,\nu(s)$.
If $\eta\in \Omega$, then
\[
\langle(\vartheta b\,\nu)(s),\eta(s)\rangle\,=\,\langle \nu(s),b(s)^*\eta(s)\rangle\,
=\,\langle \nu(s),(b^*\eta)(s)\rangle\,
\]
is continuous, which shows that
$\vartheta b\,\nu$ is weakly continuous with respect to $\Omega$. Since $\vartheta b\,\nu$
is also uniformly bounded, we conclude that $\vartheta b\,\nu\in \Omegawk$. It is straightforward
to show that the map
$\nu\mapsto \vartheta b\,\nu$ is a bounded $C(\Delta)$-endomorphism of $\Omegawk$ and hence it gives rise
to an element $\vartheta b\in B(\Omegawk)$. It is clear that $\vartheta$ is a
homomorphism. If $\vartheta b=0$, then $b(s)\omega(s)=0$ for all $\omega\in\Omega$, $s\in\Delta$
and so $b(s)=0$ for all $s$; then $\|b\|=\sup_s\|b(s)\|=0$, and $b=0$.
So $\vartheta$ is one-to-one, and thus isometric.
Finally, it is clear that \eqref{equation:action of A on Omegawk} holds by construction.
\end{proof}

One consequence of the proof of Proposition \ref{proposition:b(Omega) in B(Omegawk)}
is that for every $b\in B(\Omega)$ there exists an operator field
$\{b(s)\}_{s\in \Delta}$ acting on the Hilbert bundle $\{H_s\}_{s\in \Delta}$
such that $(b\,\omega)(s)=b(s)\,\omega(s)$, for every $s\in \Delta$. This property, however, is not shared by all
elements of $B(\Omegawk)$.

\begin{lemma}\label{lemma:faithful compression}
If $z\in B(\Omegawk)$ and $\Theta_{\omega,\omega}z\Theta_{\mu,\mu}=0$ for all $\omega,\mu\in\Omega$,
then $z=0$.
\end{lemma}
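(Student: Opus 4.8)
The plan is to first show that $z$ annihilates the submodule $\Omega$, and then to bootstrap from $\Omega$ to all of $\Omegawk$ using adjointability. The engine of the argument is the following elementary observation: if $\zeta\in\Omegawk$ satisfies $\Theta_{\omega,\omega}\zeta=0$ for every $\omega\in\Omega$, then $\zeta=0$. Indeed, since the module action is pointwise and $\langle\zeta,\omega\rangle(s)=\langle\zeta(s),\omega(s)\rangle$ for all $s$ when $\omega\in\Omega$, we have $(\Theta_{\omega,\omega}\zeta)(s)=\langle\zeta,\omega\rangle(s)\,\omega(s)=\langle\zeta(s),\omega(s)\rangle\,\omega(s)$ for every $s\in\Delta$; the hypothesis forces this to vanish pointwise, and examining the cases $\omega(s)=0$ and $\omega(s)\neq0$ separately gives $\langle\zeta,\omega\rangle=0$ for all $\omega\in\Omega$, whence $\zeta\in\Omega^\perp=\{0\}$ by Lemma~\ref{lemma:density}.

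I would then apply this with $\zeta=z\,\Theta_{\mu,\mu}\nu$, for arbitrary $\mu\in\Omega$ and $\nu\in\Omegawk$: the hypothesis $\Theta_{\omega,\omega}z\,\Theta_{\mu,\mu}=0$ says precisely that $\Theta_{\omega,\omega}\zeta=0$ for every $\omega\in\Omega$, so $\zeta=0$, i.e.\ $z\,\Theta_{\mu,\mu}=0$ for all $\mu\in\Omega$. Because $(\xi,\eta)\mapsto\Theta_{\xi,\eta}$ is sesquilinear and $z$ is linear, polarisation upgrades this to $z\,\Theta_{\mu_1,\mu_2}=0$ for all $\mu_1,\mu_2\in\Omega$. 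Evaluating $z\,\Theta_{\mu_1,\mu_2}\nu=\langle\nu,\mu_2\rangle\,z(\mu_1)=0$ at $\nu=\mu_2$ yields $\langle\mu_2,\mu_2\rangle\,z(\mu_1)=0$, that is, $\|\mu_2(s)\|^2\,z(\mu_1)(s)=0$ for every $s\in\Delta$. Thus $z(\mu_1)(s)=0$ at every $s$ for which some $\mu_2\in\Omega$ has $\mu_2(s)\neq0$; by axiom (II) of Definition~\ref{definition:continuous Hilbert bundle} such a $\mu_2$ exists unless $H_s=\{0\}$, in which case $z(\mu_1)(s)\in H_s=\{0\}$ anyway. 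Hence $z(\mu_1)=0$ for every $\mu_1\in\Omega$, so $z$ vanishes on $\Omega$. Note that here it is essential to work with the off-diagonal $\Theta_{\mu_1,\mu_2}$ (and thus with polarisation), since the diagonal operators alone only control $z(\mu)$ on the set where $\mu$ itself is nonzero.

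The main obstacle is the final passage from $\Omega$ to $\Omegawk$, because $\Omega$ is in general \emph{not} norm-dense in $\Omegawk$, so vanishing on $\Omega$ does not immediately give vanishing on $\Omegawk$. Here I would exploit that $z$ is adjointable and that the hypothesis is symmetric under taking adjoints: since each $\Theta_{\nu,\nu}$ is self-adjoint, $(\Theta_{\omega,\omega}z\,\Theta_{\mu,\mu})^*=\Theta_{\mu,\mu}\,z^*\,\Theta_{\omega,\omega}=0$, so $z^*$ satisfies the same hypothesis and the previous two paragraphs apply verbatim to give $z^*|_\Omega=0$. Then for any $\nu\in\Omegawk$ and $\omega\in\Omega$ we have $\langle z(\nu),\omega\rangle=\langle\nu,z^*(\omega)\rangle=0$, so $z(\nu)\in\Omega^\perp=\{0\}$ by Lemma~\ref{lemma:density}, giving $z(\nu)=0$ and hence $z=0$.
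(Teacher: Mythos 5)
Your proof is correct, and its endgame coincides exactly with the paper's: both arguments come down to showing that $z^*$ vanishes on $\Omega$ and then concluding, from $\langle z\nu,\omega\rangle=\langle\nu,z^*\omega\rangle=0$ together with Lemma~\ref{lemma:density}, that $z\nu=0$ for every $\nu\in\Omegawk$; you are also right to flag that $\Omega$ need not be dense in $\Omegawk$, so adjointability is genuinely needed at this point. The difference is in how vanishing on $\Omega$ is reached. The paper targets $z^*$ directly: from $0=\Theta_{\omega,\omega}z\Theta_{\mu,\mu}\xi$ it extracts the scalar identity $0=\langle\xi,\mu\rangle\,|\langle \mu,z^*\omega\rangle|^2$, then for a fixed $s$ uses axiom (II) to pick $\mu\in\Omega$ with $\mu(s)=(z^*\omega)(s)$ and sets $\xi=\mu$, forcing $\|(z^*\omega)(s)\|^6=0$; this needs no polarisation and no auxiliary lemma, only that one pointwise representation trick. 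You instead first prove $z\Theta_{\mu,\mu}=0$ via your ``engine'' observation, upgrade to $z\Theta_{\mu_1,\mu_2}=0$ by polarising the sesquilinear map $(\xi,\eta)\mapsto\Theta_{\xi,\eta}$, deduce $z|_\Omega=0$ by evaluating at $\mu_2$ and doing the case analysis over axiom (II), and only then pass to $z^*$ via the $*$-symmetry $(\Theta_{\omega,\omega}z\Theta_{\mu,\mu})^*=\Theta_{\mu,\mu}z^*\Theta_{\omega,\omega}=0$. Your route is more systematic, avoiding the ad hoc choice $\mu(s)=(z^*\omega)(s)$, at the cost of length; note also that the portion establishing $z|_\Omega=0$ is never used in your conclusion---since the hypothesis is $*$-symmetric anyway, you could run your steps on $z^*$ from the outset, which would bring your argument very close to the paper's.
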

\begin{proof}
For any $\xi,\,\omega,\,\mu\in\Omega$ we have that
\[
0=\Theta_{\omega,\omega}\,z\,\Theta_{\mu,\mu}\,\xi=\langle\xi,\mu\rangle\,\langle z\mu,\omega\rangle\,\omega.
\]
Hence, we get that
\[
0=\langle\xi,\mu\rangle\,|\langle z\mu,\omega\rangle|^2
=\langle\xi,\mu\rangle\,|\langle \mu,z^*\omega\rangle|^2.
\]
We are free to choose $\xi,\mu\in\Omega$. Fix $s$, and
choose $\mu$ with $\mu(s)=z^*\omega(s)$; let
$\xi=\mu$.
Then, as $\mu\in\Omega$, we get $0=\langle \mu,\mu\rangle(s)=\langle \mu(s),\mu(s)\rangle$, so
$z^*\omega(s)=\mu(s)=0$. As $s\in \Delta$ is arbitrary, $z^*\omega=0$
for every $\omega\in \Omega$. For any $\nu\in \Omegawk$ and every $\omega\in \Omega$,
$\langle z\nu,\omega\rangle=\langle \nu,z^*\omega\rangle=0$.
By Lemma \ref{lemma:density} we conclude that $z\nu=0$ for $\nu\in \Omegawk$ and
hence $z=0$.
\end{proof}

\begin{proof}[Proof of Theorem \ref{theorem:inclusion}]

We consider the embeddings $A\xrightarrow{\varrho}B(\Omega)$ and $B(\Omega)\xrightarrow{\vartheta}B(\Omegawk)$
defined in Propositions \ref{proposition:representation Lemma} and \ref{proposition:b(Omega) in B(Omegawk)}.
In this way, we get the inclusions in \eqref{theorem:inclusion:1}.

Because $B(\Omegawk)$ is a type I AW$^*$-algebra, it is injective \cite[Proposition 5.2]{hamana1981}.
To show that $B(\Omegawk)$
is the injective envelope $I(K(\Omega))$ of $K(\Omega)$,
we need to show that the embedding $ \vartheta\circ\varrho$
of $K(\Omega)$ into $B(\Omegawk)$ is rigid \cite[Theorem 15.8]{Paulsen-book}:
that is, we aim to prove that if $\phi:B(\Omegawk)\rightarrow B(\Omegawk)$ is a
unital completely positive linear map for which $\phi|_{K(\Omega)}=\mbox{\rm id}_{K(\Omega)}$,
then $\phi=\mbox{\rm id}_{B(\Omegawk)}$.

Let $\phi:B(\Omegawk)\rightarrow B(\Omegawk)$ be such a ucp map
with $\phi|_{K(\Omega)}=\mbox{\rm id}_{K(\Omega)}$.
Suppose that $z\in B(\Omegawk)$ and $\omega,\mu\in\Omega$. Then
$\Theta_{ \omega, \omega}z\Theta_{ \mu, \mu}\,
=\,\Theta_{ \langle z\mu,\omega\rangle  \omega,\mu}\,\in K(\Omega)$.
Because $K(\Omega)$ is in the
multiplicative domain of $\phi$, we have that
$\phi(axb)=a\phi(x)b$ for all $x\in B(\Omegawk)$ and $a,b\in K(\Omega)$. This implies
that
\[
\Theta_{ \omega, \omega}\phi(z)\Theta_{ \mu,\mu}
=\phi(\Theta_{ \omega, \omega}z\Theta_{ \mu,\mu})
=\phi(\Theta_{ \langle z\mu,\omega\rangle  \omega,\mu})
=\Theta_{ \langle z\mu,\omega\rangle  \omega,\mu}
=\Theta_{ \omega, \omega}z\Theta_{ \mu,\mu},
\]
and so $\Theta_{ \omega, \omega}(z-\phi(z))\Theta_{ \mu,\mu}=0$.
Since $\omega,\mu$ were arbitrary,
Lemma \ref{lemma:faithful compression} implies that $z-\phi(z)=0$ and so
$\phi=\mbox{id}_{B(\Omegawk)}$.

We have shown above that
the inclusion $K(\Omega)\subset B(\Omegawk)$ is rigid. Moreover,
$K(\Omega)$ is an essential ideal of $B(\Omega)$ and
$K(\Omega)\subset A\subset B(\Omega)$. Hence, $I(K(\Omega))=I(A)=I(B(\Omega))=B(\Omegawk)$.
\end{proof}

We conclude this section with a remark about the ideal
$K(\Omegawk)$ of $B(\Omegawk)$. In type I AW$^*$-algebras, the ideal
generated by the abelian projections has a prominent role. As it happens, $K(\Omegawk)$
is precisely this ideal.

\begin{proposition}\label{proposition:k-omegawk} The C$^*$-algebra $K(\Omegawk)$ coincides with the ideal $J\subset B(\Omegawk)$
generated by the abelian projections of $B(\Omegawk)$. So
$K(\Omegawk)$ is a liminal C$^*$-algebra with Hausdorff spectrum.
\end{proposition}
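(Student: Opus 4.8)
The plan is to prove the two asserted equalities in turn: first that $K(\Omegawk)$ equals the ideal $J$ generated by the abelian projections of the type~I AW$^*$-algebra $B(\Omegawk)$, and then to deduce from this identification that $K(\Omegawk)$ is liminal with Hausdorff spectrum. The natural bridge between the two descriptions is the rank-one operators $\Theta_{\nu,\nu}$. Recall that the algebraic ideal $F(\Omegawk)$ consists of finite sums $\sum_j \Theta_{\nu_j,\nu_j'}$ and that $K(\Omegawk)$ is its norm-closure. My first step would be to show that each $\Theta_{\nu,\nu}$, for a suitably normalised $\nu$, is (a scalar multiple of) an abelian projection, and conversely that every abelian projection of $B(\Omegawk)$ lies in $K(\Omegawk)$. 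Since $B(\Omegawk)$ is the endomorphism algebra of a Kaplansky--Hilbert module (Theorem~\ref{theorem:aw-mod1}), Kaplansky's structure theory for type~I AW$^*$-modules \cite[Theorems 6, 7]{kaplansky1953} gives precise control of abelian projections in terms of ``cyclic'' vectors of the module, which is exactly the language of the $\Theta_{\nu,\nu}$.

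First I would establish the inclusion $J\subset K(\Omegawk)$. An abelian projection $p\in B(\Omegawk)$ has the property that $pB(\Omegawk)p$ is abelian, which (again by Kaplansky's theory over the abelian AW$^*$-algebra $C(\Delta)$) forces $p$ to have the form $\Theta_{\nu,\nu}$ for a vector $\nu\in\Omegawk$ with $\langle\nu,\nu\rangle$ a projection in $C(\Delta)$; such a $\Theta_{\nu,\nu}$ is visibly a compact endomorphism, so $p\in K(\Omegawk)$. As $K(\Omegawk)$ is an ideal, the whole ideal $J$ generated by these projections lies inside $K(\Omegawk)$.

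For the reverse inclusion $K(\Omegawk)\subset J$, it suffices (since $J$ is norm-closed, being an ideal in a C$^*$-algebra, and $K(\Omegawk)$ is the closure of $F(\Omegawk)$) to show that every generator $\Theta_{\nu_1,\nu_2}\in F(\Omegawk)$ belongs to $J$. The key observation is that $\Theta_{\nu_1,\nu_2}=\Theta_{\nu_1,\nu_1'} \, p \, \Theta_{\nu_2',\nu_2}$ for appropriate choices once one factors through a single abelian projection $p$ of the form $\Theta_{\nu,\nu}$ with $\langle \nu,\nu\rangle$ a projection supporting the relevant vectors. Concretely, using the functional calculus in $C(\Delta)$ one normalises $\nu_2$ so that $\langle\nu_2,\nu_2\rangle$ is dominated by a projection $e\in C(\Delta)$, and then writes $\Theta_{\nu_1,\nu_2}$ as a product of an abelian projection with elements of $B(\Omegawk)$; this exhibits it as a member of the two-sided ideal $J$. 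Thus $F(\Omegawk)\subset J$ and, taking closures, $K(\Omegawk)\subset J$, giving $K(\Omegawk)=J$.

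The final sentence then follows from general AW$^*$-algebra theory. A type~I AW$^*$-algebra is liminal on the ideal generated by its abelian projections, since abelian projections correspond under Kaplansky's representation to rank-one-valued operator fields, whose images under every irreducible representation are compact; hence $K(\Omegawk)=J$ is liminal (CCR). The Hausdorff spectrum follows because $K(\Omegawk)$ is a continuous-trace C$^*$-algebra arising from the Kaplansky--Hilbert module $\Omegawk$ over $C(\Delta)$ with $C(\Delta)$ as its centre, and continuous-trace algebras always have Hausdorff spectrum. I expect the main obstacle to be the reverse inclusion $K(\Omegawk)\subset J$: identifying $\Theta_{\nu,\nu}$ with (a scalar multiple of) an abelian projection requires the delicate normalisation of $\langle\nu,\nu\rangle$ to a genuine projection in $C(\Delta)$, which must be done using the AW$^*$-structure and the fact (from Lemma~\ref{lemma:lowersemicontinuous}) that $\langle\nu,\nu\rangle$ is a bona fide continuous function equal off a meagre set to the pointwise squared norm. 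Handling the case where $\langle\nu,\nu\rangle$ vanishes on part of $\Delta$, and patching the resulting partial projections together coherently via property~(ii) of the Kaplansky--Hilbert module, is where the genuine work lies.
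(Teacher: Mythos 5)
Your first inclusion ($J\subset K(\Omegawk)$) is exactly the paper's: it follows from Kaplansky's characterisation of the abelian projections of $B(\Omegawk)$ as the maps $\Theta_{\nu,\nu}$ with $\langle\nu,\nu\rangle$ a projection in $C(\Delta)$ \cite[Lemma 13]{kaplansky1953}. The two remaining steps, however, contain genuine gaps. For the reverse inclusion, your factorisation idea is sound in principle: if $\langle\nu,\nu\rangle=e$ is a projection with $e\cdot\nu_1=\nu_1$, then indeed $\Theta_{\nu_1,\nu_2}=\Theta_{\nu_1,\nu}\,\Theta_{\nu,\nu}\,\Theta_{\nu,\nu_2}\in J$. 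But the step you explicitly defer---producing such a normalised $\nu$ when $|\nu_1|$ is not bounded away from zero, so that $1/|\nu_1|$ does not exist in $C(\Delta)$---is the crux of the whole argument, and your proposal does not carry it out: functional calculus alone cannot do it, and the patching plan via property (ii) of Definition \ref{definition:Kaplansky-Hilbert module}, while it can be made to work (normalise on the clopen sets $\overline{\{|\nu_1|>1/n\}}$ and glue), is precisely the ``genuine work'' you leave undone. The paper sidesteps this difficulty entirely with a short approximation argument: given $\varepsilon>0$, let $U$ be the clopen complement of the closure of $\{s:|\nu|(s)<\varepsilon^{1/2}\}$; on $U$ one may divide by $|\nu|$, so $\nu'=(\chi_U/|\nu|)\cdot\nu$ satisfies $|\nu'|=\chi_U$, whence $\Theta_{\nu',\nu'}$ is an abelian projection and $x_\varepsilon=|\nu|^2\cdot\Theta_{\nu',\nu'}=\chi_U\,\Theta_{\nu,\nu}$ lies in $J$ and differs in norm from $\Theta_{\nu,\nu}$ by at most $\max_{s\notin U}|\nu|(s)^2\le\varepsilon$; closedness of $J$ then gives $\Theta_{\nu,\nu}\in J$, and polarisation handles general $\Theta_{\nu_1,\nu_2}$. (Relatedly, your parenthetical reason that $J$ is norm-closed ``being an ideal in a C$^*$-algebra'' is not a reason---algebraic ideals need not be closed; one takes $J$ to be the closed ideal generated by the abelian projections, as the paper implicitly does.)

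The second gap concerns the last sentence of the statement. Your justification that $J$ is liminal because abelian projections ``correspond to rank-one-valued operator fields, whose images under every irreducible representation are compact'' does not parse: elements of $B(\Omegawk)$---in particular vectors of $\Omegawk$ and the projections $\Theta_{\nu,\nu}$---are in general \emph{not} representable as operator fields (the paper notes this immediately after Proposition \ref{proposition:b(Omega) in B(Omegawk)}), and $(\Delta,\{H_s\}_{s\in\Delta},\Omegawk)$ is not a continuous Hilbert bundle, so Fell's theory does not apply to it; consequently the claim that $K(\Omegawk)$ is a continuous-trace algebra ``arising from the Kaplansky--Hilbert module'' is unsupported. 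What the paper does here is invoke Halpern's theorem \cite{halpern1966}: in any type I AW$^*$-algebra, the ideal generated by the abelian projections is liminal and has Hausdorff spectrum. Some such external input (or an honest proof of it) is needed; as written, the final assertion is unproved.
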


\begin{proof}
By \cite[Lemma 13]{kaplansky1953}, a projection $e\in B(\Omegawk)$ is abelian if and only if there exists $\nu\in \Omegawk$
such that $|\nu|$ is a projection in $C(\Delta)$ and $e=\Theta_{\nu,\nu}$. Hence, $J\subset K(\Omegawk)$.

To show that $K(\Omegawk)\subset J$,
assume $\nu\in \Omegawk$ is nonzero. Let $\varepsilon>0$. We will show that there is an $x_\varepsilon\in J$
such that $\|\Theta_{\nu,\nu}-x_\varepsilon\|<\varepsilon$.
Let $V\subset\Delta$ be the (clopen) closure of
$\{s\in\Delta:\ |\nu|(s)<\varepsilon^{1/2}\}$, $U=\Delta\setminus V$ (also clopen)
and let $g=(1/|\nu|)\,\chi_U\in C(\Delta)_+$. Then $g|\nu|=\chi_U$ and $\|\,\chi_{\Delta\setminus
U}|\nu|\,\|<\varepsilon^{1/2}$.
Let $\nu'=g\cdot\nu$ so that $|\nu'|=\chi_U$. Hence, $\Theta_{\nu',\nu'}\in J$ and $\Theta_{\nu',\nu'}=g^2\cdot\Theta_{\nu,\nu}$.
Let $x_\varepsilon=|\nu|^2\cdot\Theta_{\nu',\nu'}\in J$. Then
\[
x_\varepsilon=|\nu|^2\cdot\Theta_{\nu',\nu'}=|\nu|^2\,g^2\,\Theta_{\nu,\nu}
=\chi_U\,\Theta_{\nu,\nu},
\]
and $x_\varepsilon-\Theta_{\nu,\nu}=\chi_{\Delta\setminus U}\cdot\Theta_{\nu,\nu}$.
Then
\begin{align*}
\|x_\varepsilon-\Theta_{\nu,\nu}\|&=
\sup_{\eta\in(\Omegawk)_1}\|\chi_{\Delta\setminus U}\cdot\Theta_{\nu,\nu}\,\eta\|
=\sup_{\eta\in(\Omegawk)_1}\|\chi_{\Delta\setminus U}\cdot\langle\eta,\nu\rangle\,\nu\|\\
&=\sup_{\eta\in(\Omegawk)_1}\,\max_{s\in\Delta\setminus U}|\langle\eta,\nu\rangle(s)|\,\|\nu(s)\|\\
&\leq\sup_{\eta\in(\Omegawk)_1}\,\max_{s\in\Delta\setminus U}|\eta|(s)\,|\nu|(s)|\,\|\nu(s)\|
\leq\max_{s\in\Delta\setminus U}|\nu|(s)^2<\varepsilon.
\end{align*}
As $\varepsilon$ was arbitrary and $J$ is closed,
we conclude that $\Theta_{\nu,\nu}\in J$. The polarisation
identity then shows that $\Theta_{\nu_1,\nu_2}\in J$ for all $\nu_1,\nu_2\in\Omegawk$.
Hence, $F(\Omegawk)\subset J$, and so
$K(\Omegawk)\subset J$.

It remains to justify the last assertion in the statement.
By the main result of \cite{halpern1966}, the ideal generated by the abelian projections in a type I
 AW$^*$-algebra is liminal and has Hausdorff spectrum. Hence, this is true of $K(\Omegawk)$.
\end{proof}

\section{Multiplier and Local Multiplier Algebras}\label{section:local multipliers}

In the previous section we established the inclusions
$K(\Omega)\subset A\subset B(\Omega)\subset B(\Omegawk)$, as
C$^*$-subalgebras, and we showed that $I(A)=B(\Omegawk)$. The
present section refines these inclusions to incorporate multiplier
algebras and local multiplier algebras.

Given a C$^*$-algebra $C$, we denote by $M(C)$ and $\mloc(C)$ its multiplier
and local multiplier algebra \cite{Ara--Mathieu-book} respectively.

The second order
local multiplier algebra of $C$ is $\mloc\left(\mloc(C)\right)$, the local multiplier algebra of $\mloc(C)$.
By \cite[Corollary 4.3]{frank--paulsen2003}, the local multiplier
algebras (of all orders) of $C$ are C$^*$-subalgebras of the
injective envelope $I(C)$ of $C$. In particular, $C\subset \mloc(C)\subset \mloc\left(\mloc(C)\right)\subset
I(C)$ as C$^*$-subalgebras.

By a well known theorem of Kasparov \cite[Theorem
1.2.33]{Ara--Mathieu-book}, \cite[Theorem 2.4]{Lance-book},
$M(K(\Omega))=B(\Omega)$. We remark that all the subalgebras we consider are
essential in $B(\Omegawk)$ (i.e. the annihilator is zero), and so whenever we write
$M(C)$ for one of these subalgebras $C\subset B(\Omegawk)$, we mean the
concrete realization \cite{pedersen1984}
\[
M(C)=\{x\in B(\Omegawk):\ xC+Cx\subset C\}.
\]

The following theorem is the main result of this section.

    \begin{theorem}\label{theorem:extension of Somerset}
    With the notations from the previous sections, we have the equality $\mloc(A)=\mloc(K(\Omega))$ and
the following inclusions (as C$^*$-subalgebras):
\begin{align}\label{e:loc-loc2}
\nonumber M(A) &\subset M(K(\Omega))=B(\Omega) \\
&\subset\,\mloc(K(\Omega)) \subset\,\mloc\left(\mloc(K(\Omega))\right)= B(\Omegawk)\,.
\end{align}
In particular, $\mloc\left(\mloc(A)\right)=I(A)$.
\end{theorem}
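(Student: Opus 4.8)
The plan is to isolate a short chain of formal facts about multiplier and local multiplier algebras, to verify every inclusion in \eqref{e:loc-loc2} except the last equality by such soft arguments, and then to concentrate the real work on the single containment $B(\Omegawk)\subseteq\mloc(\mloc(K(\Omega)))$. First I would settle $\mloc(A)=\mloc(K(\Omega))$ together with the outer inclusions. Since $K(\Omega)$ is an ideal of $B(\Omega)\supseteq A$ that lies in $A$, and since $K(\Omega)\subseteq B(\Omegawk)$ is rigid, hence essential, by Theorem \ref{theorem:inclusion}, the annihilator of $K(\Omega)$ in $A$ is contained in its annihilator in $B(\Omegawk)$ and so vanishes; thus $K(\Omega)$ is an essential ideal of $A$. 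The invariance of the local multiplier algebra under passage to an essential ideal \cite{Ara--Mathieu-book} then gives $\mloc(A)=\mloc(K(\Omega))$. For the first inclusion, if $x\in M(A)$ and $(e_\lambda)$ is an approximate unit of $K(\Omega)$, then for $k\in K(\Omega)$ one has $xk=\lim_\lambda(xe_\lambda)k$ with $xe_\lambda\in A$ and $(xe_\lambda)k\in A\cdot K(\Omega)\subseteq K(\Omega)$; hence $M(A)\subseteq M(K(\Omega))$. Together with $M(K(\Omega))=B(\Omega)$ (Kasparov) and the general facts $M(C)\subseteq\mloc(C)$ and $C\subseteq\mloc(C)$, this produces every inclusion in \eqref{e:loc-loc2} apart from the final equality.

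The containment $\mloc(\mloc(K(\Omega)))\subseteq B(\Omegawk)$ is immediate: by \cite[Corollary 4.3]{frank--paulsen2003} all iterated local multiplier algebras sit inside the injective envelope, so $\mloc(\mloc(K(\Omega)))\subseteq I(\mloc(K(\Omega)))$; since $K(\Omega)\subseteq\mloc(K(\Omega))\subseteq I(K(\Omega))$ and the injective envelope is unaltered by an intermediate subalgebra, $I(\mloc(K(\Omega)))=I(K(\Omega))=B(\Omegawk)$ by Theorem \ref{theorem:inclusion}.

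The substance is the reverse containment, that every $z\in B(\Omegawk)$ multiplies some essential ideal of $D_1:=\mloc(K(\Omega))$. By the polarisation identity and Proposition \ref{proposition:k-omegawk} it is enough to treat $z=\Theta_{\nu,\nu}$ with $|\nu|$ a projection. Here Lemma \ref{lemma:lowersemicontinuous} supplies the localisation data: with $g=\langle\nu,\nu\rangle-\|\nu(\cdot)\|^2\ge 0$ upper semicontinuous, each $F_n=\{g\ge 1/n\}$ is closed and nowhere dense, so $U_n=\Delta\setminus F_n$ is dense open and $\nu$ is strongly continuous to within $1/n$ on $U_n$. I would use the first $\mloc$ to record that, for each fixed $n$, the rank-one field agreeing with $\Theta_{\nu,\nu}$ over $U_n$ is a genuine multiplier of the essential ideal $I_{U_n}\triangleleft K(\Omega)$, and hence lies in $D_1$; the second $\mloc$ is then charged with gluing the countable family $\{U_n\}_n$ into a single essential ideal $\mathcal{E}\triangleleft D_1$—one whose localisation set is the $G_\delta$ set $\bigcap_n U_n$—on which $\Theta_{\nu,\nu}$ multiplies back into $\mathcal{E}$. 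Equivalently, the aim is to show that $\mloc(\mloc(K(\Omega)))$ is injective, whence, being squeezed between $K(\Omega)$ and $I(K(\Omega))$, it must equal $B(\Omegawk)$.

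The main obstacle is exactly this gluing step, and it is here that two iterations of $\mloc$ are genuinely required: one can exhibit $\nu\in\Omegawk$ with $|\nu|=1$ whose strong--continuity set $\bigcap_n U_n$ has empty interior (take $\|\nu(\cdot)\|^2$ to be a bounded lower semicontinuous function whose continuous majorant exceeds it precisely on a dense meagre set), so that no single dense open subset of $\Delta$ works and $\Theta_{\nu,\nu}\notin\mloc(K(\Omega))$. Thus $\mathcal{E}$ cannot come from a dense open subset of $\Delta$; it must be built inside $D_1$ itself, whose ideal structure is finer than that of $K(\Omega)$, and the crux is to verify both that $\mathcal{E}$ is essential in $D_1$ and that $\Theta_{\nu,\nu}\mathcal{E}+\mathcal{E}\Theta_{\nu,\nu}\subseteq\mathcal{E}$. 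Granting this, $B(\Omegawk)\subseteq\mloc(\mloc(K(\Omega)))$ and hence the final equality in \eqref{e:loc-loc2} follow; applying $\mloc\circ\mloc$ to $\mloc(A)=\mloc(K(\Omega))$ then yields $\mloc(\mloc(A))=\mloc(\mloc(K(\Omega)))=B(\Omegawk)=I(A)$, the last equality by Theorem \ref{theorem:inclusion}.
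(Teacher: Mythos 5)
Your first two paragraphs are fine and coincide with the paper's own argument (essentiality of $K(\Omega)$ in $A$, Kasparov's theorem, the Frank--Paulsen inclusions), so the issue is entirely with your attack on the containment $B(\Omegawk)\subseteq\mloc\left(\mloc(K(\Omega))\right)$, where there are two genuine problems. The first is structural: the reduction ``it is enough to treat $z=\Theta_{\nu,\nu}$ with $|\nu|$ a projection'' is invalid. Proving that all such $z$ lie in $\mloc(\mloc(K(\Omega)))$ yields, after polarisation, central multiplication and norm closure, only $K(\Omegawk)\subseteq\mloc(\mloc(K(\Omega)))$; and $K(\Omegawk)$ is in general a \emph{proper} essential ideal of $B(\Omegawk)=M(K(\Omegawk))$, so nothing in your plan recovers the multiplier algebra from its ideal of ``compacts.'' The paper is calibrated exactly to avoid this: Proposition \ref{proposition:abelian projections} (via Proposition \ref{proposition:pegado2}) places the abelian projections in the \emph{first} algebra $\mloc(K(\Omega))$, whence $K(\Omegawk)\subseteq\mloc(K(\Omega))$ by Proposition \ref{proposition:k-omegawk}; then $K(\Omegawk)$, being essential in $B(\Omegawk)$, is an essential ideal of $\mloc(K(\Omega))$, and the definition of the second-order algebra together with Kasparov/Pedersen gives $B(\Omegawk)=M(K(\Omegawk))\subseteq\mloc(\mloc(K(\Omega)))$. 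If the rank-one operators were captured only at the second level, as you propose, this mechanism would land $B(\Omegawk)$ only in $\mloc^{3}(K(\Omega))$, which is not the statement.

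The second problem is that the premise forcing you to the second level is false. You assert that for suitable $\nu$ with $|\nu|=1$ one has $\Theta_{\nu,\nu}\notin\mloc(K(\Omega))$ because ``no single dense open subset of $\Delta$ works.'' This conflates membership in $M(I)$ for one fixed essential ideal $I$ with membership in $\mloc(K(\Omega))$, which is the \emph{norm closure} of $\bigcup_I M(I)$: the ideal may depend on $\varepsilon$. Such a $\Theta_{\nu,\nu}$ is an abelian projection, and Proposition \ref{proposition:abelian projections} shows it \emph{does} lie in $\mloc(K(\Omega))$: given $\varepsilon$, Proposition \ref{proposition:pegado2} produces a clopen partition $\{U_i\}$ with dense union $U$ and $\tilde\omega=\sum_i c_i\omega_i$ close to $\nu$ in the module norm, and then $\Theta_{\tilde\omega,\tilde\omega}\in M(I_U)$ is within $\varepsilon$ of $\Theta_{\nu,\nu}$; the clopen structure (Lemma \ref{lemma:pegado1}) is precisely what makes $\Theta_{\tilde\omega,\tilde\omega}$ map $I_U$ into itself. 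By contrast, your candidate multiplier over the merely dense open set $U_n$ has no such property: for $a=\Theta_{\omega_1,\omega_2}\in I_{U_n}$ one gets $\Theta_{\nu,\nu}a=\Theta_{\langle\omega_1,\nu\rangle\cdot\nu,\,\omega_2}$, which lies in $K(\Omegawk)$ but in general not in $K(\Omega)$, so even the first step of your plan is unjustified. (The genuine gap between $\mloc(K(\Omega))$ and $\mloc^{2}(K(\Omega))$ exhibited by Ara--Mathieu occurs at global elements of $M(K(\Omegawk))$, not at rank-one operators.) Finally, the step you yourself identify as the crux---constructing the essential ideal $\mathcal{E}\triangleleft\mloc(K(\Omega))$ and verifying the gluing---is left entirely open, so the proposal remains a plan with its hard part missing.
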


Ara and Mathieu have presented examples of Stonean spaces $\Delta$ and trivial
Hilbert bundles $\Omega$
where the inclusion $\mloc(K(\Omega)) \subset
\mloc\left(\mloc(K(\Omega))\right)$ in \eqref{e:loc-loc2} is proper
\cite[Theorem 6.13]{ara-mathieu2008}. As a consequence of Theorem
\ref{theorem:extension of Somerset} and the fact that $B(\Omegawk)=I(K(\Omega))$,
we see that this gap cannot occur for
higher local multiplier algebras, i.e. for all $k\geq 2$,
$ \mloc^{k+1}(K(\Omega))= \mloc^k(K(\Omega))$  ---
where $\mloc^{k+1}(K(\Omega))=\mloc(\mloc^k (K(\Omega)))$ for $k\geq 1$.

\bigskip

The proof of Theorem \ref{theorem:extension of Somerset} is achieved through a number of lemmas.

\begin{lemma}\label{lemma:pos dense}
The set
\[
F_+=\{\displaystyle\sum_{j=1}^n\Theta_{\omega_j,\omega_j}\,:\,
n\in\mathbb N,\,\omega_j\in\Omega\}
\]
is dense in the positive cone of $K(\Omega)$.
\end{lemma}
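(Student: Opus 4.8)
The plan is to approximate a given $a\in K(\Omega)_+$ \emph{directly} by a global element of $F_+$, assembled by gluing local finite-rank approximations over a clopen partition of $\Delta$. First I would record the pointwise picture: since $K(\Omega)\subset A$ (Proposition \ref{proposition:representation Lemma}), each $a\in K(\Omega)_+$ is represented by an operator field $s\mapsto a(s)$ with $a(s)\in K(H_s)_+$, and by Proposition \ref{proposition:b(Omega) in B(Omegawk)} the norm is computed fieldwise, $\|a\|=\sup_s\|a(s)\|$. Under $\varrho$ the generator $\Theta_{\omega,\omega}$ corresponds to the field $s\mapsto\omega(s)\otimes\omega(s)\ge 0$, and every element of $F_+$ lies in $K(\Omega)\subset A$. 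Thus it suffices, given $\varepsilon>0$, to produce $q\in F_+$ with $\sup_s\|a(s)-q(s)\|\le\varepsilon$.

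For the local step, fix $s_0\in\Delta$. As $a(s_0)$ is a positive compact operator on $H_{s_0}$, its spectral decomposition yields vectors $\xi_1,\dots,\xi_m\in H_{s_0}$ with $\|a(s_0)-\sum_k\xi_k\otimes\xi_k\|<\varepsilon$. Using Lemma \ref{lemma:vector field with fixed value} I would lift each $\xi_k$ to $\omega_k\in\Omega$ with $\omega_k(s_0)=\xi_k$ and set $p_{s_0}=\sum_k\Theta_{\omega_k,\omega_k}\in F_+$. Because $a-p_{s_0}\in A$, the function $s\mapsto\|a(s)-p_{s_0}(s)\|$ is continuous and is $<\varepsilon$ at $s_0$, hence $<\varepsilon$ on a neighbourhood of $s_0$; since $\Delta$ is Stonean, and thus zero-dimensional, this neighbourhood may be taken clopen.

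The patching step uses the Stonean topology decisively. Covering the compact space $\Delta$ by finitely many such clopen neighbourhoods and disjointifying (clopen sets are closed under finite Boolean operations) gives pairwise-disjoint clopen sets $V_1,\dots,V_N$ with $\bigcup_i V_i=\Delta$, each carrying a local approximant $p_i\in F_+$ with $\|a(s)-p_i(s)\|<\varepsilon$ for $s\in V_i$. The identity $\chi^{\phantom{a}}_{V_i}\cdot\Theta_{\omega,\omega}=\Theta_{\chi_{V_i}\cdot\omega,\,\chi_{V_i}\cdot\omega}$, valid because $\chi^{\phantom{a}}_{V_i}$ is a real idempotent in $C(\Delta)$ and $\chi^{\phantom{a}}_{V_i}\cdot\omega\in\Omega$, shows that $\chi^{\phantom{a}}_{V_i}\cdot p_i\in F_+$, so $q=\sum_i\chi^{\phantom{a}}_{V_i}\cdot p_i\in F_+$. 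Since the $V_i$ are disjoint with union $\Delta$ one has $\sum_i\chi^{\phantom{a}}_{V_i}=1$, whence $q(s)=p_{i(s)}(s)$ for the unique $i(s)$ with $s\in V_{i(s)}$; therefore $\|a-q\|=\sup_s\|a(s)-q(s)\|\le\varepsilon$, which finishes the argument.

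I expect the main obstacle to be exactly this globalisation. A continuous field of positive finite-rank operators need not admit a single decomposition $\sum_k\omega_k(s)\otimes\omega_k(s)$ valid at every $s$ (eigenvalue crossings obstruct any naive diagonalisation), so one cannot simply diagonalise $a$, nor reduce to a positive element $b^*b\in F(\Omega)$ obtained from approximating $a^{1/2}$ and hope it already lies in $F_+$. The resolution is to keep the decomposition local and exploit two special features of the Stonean setting: the abundance of clopen sets, which refines a finite open cover to a finite disjoint clopen partition, and the compatibility of the characteristic-function module action with the rank-one generators, which keeps the patched field inside the cone $F_+$ rather than merely in $F(\Omega)$. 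The only care needed is to verify that each local approximant genuinely lies in $A$, so that fieldwise norm-continuity may be invoked.
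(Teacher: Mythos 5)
Your proof is correct, and its first half coincides exactly with the paper's: approximate $h(s_0)$ by a finite sum $\sum_k\xi_k\otimes\xi_k$, lift the vectors to fields $\omega_k\in\Omega$ through $s_0$, and use norm-continuity of fields in $A$ to get a neighbourhood where the local approximant $p_{s_0}\in F_+$ works. Where you diverge is the globalisation. You refine the finite cover to a \emph{disjoint clopen} partition $V_1,\dots,V_N$ (using that $\Delta$ is Stonean, hence zero-dimensional) and patch with characteristic functions, getting the exact pointwise identity $q(s)=p_{i(s)}(s)$ via $\chi^{\phantom{a}}_{V_i}\cdot\Theta_{\omega,\omega}=\Theta_{\chi_{V_i}\cdot\omega,\,\chi_{V_i}\cdot\omega}$. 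The paper instead takes a \emph{continuous} partition of unity $\{\psi_i\}$ subordinate to the open cover and uses the identity $\psi_i\cdot\Theta_{\omega,\omega}=\Theta_{\psi_i^{1/2}\cdot\omega,\,\psi_i^{1/2}\cdot\omega}$, closing with the convexity estimate $\|h(s)-\kappa(s)\|\le\sum_i\psi_i(s)\|(h-\kappa_i)(s)\|<\varepsilon$. Both patching devices keep the glued field inside the cone $F_+$, which is the crux you correctly identified. The trade-off: the paper's argument never uses the Stonean hypothesis at this point and would prove the lemma over any compact Hausdorff base space, whereas yours is tied to zero-dimensionality; in exchange, yours avoids square roots and the convexity step, since idempotents are their own square roots and each point sees exactly one local approximant.
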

\begin{proof} Assume that $h\in K(\Omega)_+$ and let $\varepsilon>0$ be arbitrary.
For each $s_0\in\Delta$ consider the
positive compact operator $h(s_0)\in K(H_{s_0})$. Then there are
vectors $\xi_1,\dots,\xi_{n_{s_0}}\in H_{s_0}$ such that
$$\|h(s_0)-\displaystyle\sum_{j=1}^{n_{s_0}}\xi_j\otimes\xi_j\|<\varepsilon\,.$$
Using (II) in Definition \ref{definition:continuous Hilbert bundle}, choose
$\omega_1,\dots,\omega_{n_{s_0}}\in \Omega$ such that
$\omega_j(s_0)=\xi_j$, $1\le j\le n_{s_0}$, and let
$\kappa_{s_0}=\sum_{j=1}^{n_{s_0}}\Theta_{\omega_j,\omega_j}$.
By continuity of the operator fields in $A$, there is an open set
$U_{s_0}\subset\Delta$ containing $s_0$ such that
$\|h(s)-\kappa_{s_0}(s)\|<\varepsilon$ for all $s\in U_{s_0}$.

This procedure leads to an open cover $\{U_s\}_{s\in\Delta}$ of $\Delta$, from which
(by compactness) there exists a finite subcover
$\{U_1,\dots,U_m\}$ and corresponding fields
$\kappa_i=\sum_{j=1}^{n_{i}}\Theta_{\omega_j^{[i]},\omega_j^{[i]}}$.
Let $\{\psi_1,\dots,\psi_m\}\subset C(\Delta)$
be a partition of unity subordinate to $\{U_1,\dots,U_m\}$
and note that
$\psi_i\cdot\Theta_{\omega_j^{[i]},\omega_j^{[i]}}\,
=\,\Theta_{\psi_i^{1/2}\cdot\omega_j^{[i]},\psi_i^{1/2}\cdot\omega_j^{[i]}}$
for all $j$ and $i$. Hence, the field
$\kappa=\sum_{i=1}^m\psi_i\cdot\kappa_i$ is in $F_+$, and for each $s\in \Delta$,
\[
\|h(s)-\kappa(s)\| = \|\displaystyle\sum_{i=1}^m\psi_i\cdot(h-\kappa_i)(s) \|
\leq \displaystyle\sum_{i=1}^m\psi_i(s)\|(h-\kappa_i)(s) \|
<\varepsilon\,.
\]
Hence, $h$ is in the norm-closure of $F_+$.
\end{proof}

\begin{lemma}\label{lemma:pegado1}
Let $\{U_i\}_{i\in\Lambda}$ be a family of pairwise disjoint
clopen subsets of $\Delta$ whose union $U$ is dense in $\Delta$,
and let $c_i=\chi^{\phantom{a}}_{U_i}\in C(\Delta)$, for each $i\in\Lambda$.
Suppose that $\{\omega_i\}_{i\in \Lambda}$ is any bounded family
in $\Omega$ and let $\tilde \omega=\sum_{i\in \Lambda} c_i\,
\omega_i\ \in\Omegawk$, in the sense of Remark \ref{remark:about the sum notation}.
If $f\in C(\Delta)$ is such that $f(s)=0$
for $s\in \Delta\setminus U$, then $f\cdot \tilde \omega\in
\Omega$.
\end{lemma}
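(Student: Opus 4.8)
The plan is to exhibit $\xi := f\cdot\tilde\omega$ as a vector field that is locally uniformly approximable by elements of $\Omega$, and then invoke the closure axiom (IV) of Definition \ref{definition:continuous Hilbert bundle} to conclude $\xi\in\Omega$. The first task is to pin down $\tilde\omega$ pointwise on the dense set $U$. Although $\tilde\omega=\sum_i c_i\omega_i$ is only an AW$^*$-module sum (Remark \ref{remark:about the sum notation}), I would recover pointwise information by testing against $\Omega$: for each fixed $i$ the defining relation reads $c_i\cdot\tilde\omega=c_i\cdot\omega_i$ in $\Omegawk$, and pairing with an arbitrary $\eta\in\Omega$ and using the identity $\langle\nu,\eta\rangle(s)=\langle\nu(s),\eta(s)\rangle$ (valid for $\nu\in\Omegawk$, $\eta\in\Omega$, at \emph{every} $s$) yields $\langle\tilde\omega(s),\eta(s)\rangle=\langle\omega_i(s),\eta(s)\rangle$ for all $s\in U_i$. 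Since $\{\eta(s):\eta\in\Omega\}=H_s$ by axiom (II), this forces $\tilde\omega(s)=\omega_i(s)$ for every $s\in U_i$. Because the $C(\Delta)$-action on $\Omegawk$ is pointwise, I then have the explicit description
\[
\xi(s)=f(s)\,\tilde\omega(s)=\begin{cases} f(s)\,\omega_i(s)=(f\cdot\omega_i)(s), & s\in U_i,\\ 0, & s\in\Delta\setminus U,\end{cases}
\]
the second line holding because $f$ vanishes off $U$.

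Next I would verify the local uniform approximation hypothesis of axiom (IV) at each point $s_0\in\Delta$, splitting into two cases. If $s_0\in U_i$ for some $i$, then $U_i$ is a clopen neighbourhood of $s_0$ on which $\xi$ agrees exactly with $f\cdot\omega_i$; since $\Omega$ is a $C(\Delta)$-module by axiom (I), $f\cdot\omega_i\in\Omega$, so on $U_i$ the approximation is in fact an equality and holds for any $\varepsilon>0$. If $s_0\in\Delta\setminus U$, I would approximate by the zero field: setting $K=\sup_s\|\tilde\omega(s)\|<\infty$ (finite because $\tilde\omega\in\Omegawk$) one has $\|\xi(s)\|=|f(s)|\,\|\tilde\omega(s)\|\le K\,|f(s)|$ for all $s$. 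As $f$ is continuous with $f(s_0)=0$, the set $\{\,s:\ |f(s)|<\varepsilon/(K+1)\,\}$ is an open neighbourhood $V$ of $s_0$ on which $\|0-\xi(s)\|\le K\,|f(s)|<\varepsilon$. In either case the hypothesis of axiom (IV) is met, so $\xi=f\cdot\tilde\omega\in\Omega$.

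The step I expect to be the main obstacle is the passage from the purely module-theoretic equality $c_i\cdot\tilde\omega=c_i\cdot\omega_i$ to the genuinely pointwise identity $\tilde\omega(s)=\omega_i(s)$ on $U_i$, since the AW$^*$-sum is not a pointwise sum and a priori controls $\tilde\omega$ only up to a meagre set. The clean resolution is that pairing against $\Omega$ \emph{does} detect values everywhere---the inner product $\langle\nu,\eta\rangle$ with $\eta\in\Omega$ is genuinely pointwise, with no meagre-set ambiguity---while axiom (II) makes $\Omega$ pointwise-total in each fibre. Once that identification is secured, the boundary behaviour is easy precisely because $f$ was assumed to vanish on $\Delta\setminus U$: the ``fused'' field is annihilated there, so the zero field suffices for the local approximation and no continuity of $\tilde\omega$ itself across $\partial U$ is ever required.
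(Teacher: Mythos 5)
Your proof is correct and follows essentially the same route as the paper's: identify $\tilde\omega(s)=\omega_i(s)$ on each $U_i$ from the defining relation $c_i\cdot\tilde\omega=c_i\cdot\omega_i$, then verify the hypothesis of axiom (IV) using $f\cdot\omega_i\in\Omega$ near points of $U_i$ and the zero field near points of $\Delta\setminus U$ (where continuity of $f$ and $f\equiv 0$ off $U$ give the estimate). Your extra care in justifying the pointwise identification via pairing against $\Omega$ and axiom (II) is sound, though since elements of $\Omegawk$ are genuine vector fields with pointwise module action and $\|\nu\|=\sup_s\|\nu(s)\|$, the equality $c_i\cdot\tilde\omega=c_i\cdot\omega_i$ is already a pointwise statement, which is why the paper dispatches this step with ``by construction.''
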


\begin{proof}
Fix $s_0\in \Delta$ and let $\varepsilon > 0$. If $s_0\in
\Delta\setminus U$, then by the continuity of $f$ and the fact
that $f(s_0)=0$ there exists an open subset $U_{s_0}\subset\Delta$
containing $s_0$ such that
$|f(s)|<\varepsilon\|\tilde\omega\|^{-1}$ for all $s\in U_{s_0}$.
Hence, the vector field $f\cdot\tilde\omega$ is within
$\varepsilon$ of the zero vector field $0\in\Omega$ on the open
set $U_{s_0}$.

On the other hand, if $s_0\in U$, then there exists $j\in \Lambda$
such that $s_0\in U_j$. By construction, $c_j\cdot \tilde
\omega=c_j \cdot\omega_j$ and so $\tilde \omega(s)=\omega_j(s)$
for all $s\in U_j$. Because
 $\|(f\cdot \tilde \omega)(s)-(f\cdot\omega_j)(s)\|=0$ for all $s\in U_j$,
the vector field $f\cdot\tilde\omega$ is within $\varepsilon$ of the vector field
$f\cdot\omega_j\in\Omega$ on the open set $U_{j}$.
Thus, by the local uniform approximation property (axiom (IV) in Definition
\ref{definition:continuous Hilbert bundle}), $f\cdot\tilde\omega\in\Omega$.
\end{proof}

The fact that $\Omega^\perp=\{0\}$ in $\Omegawk$ (Lemma \ref{lemma:density})
suggests that $\Omega$ is somehow dense in $\Omegawk$. The
next proposition makes this relation more explicit.

\begin{proposition} \label{proposition:pegado2} If $\nu\in \Omegawk$ and
$\varepsilon>0$, then there exist a family $\{c_i\}_{i\in
\Lambda}$ of pairwise orthogonal projections in $C(\Delta)$ with
supremum $1$ and a bounded family $\{\omega_i\}_{i\in
\Lambda}\subset\Omega$ such that $\|\nu-\sum_{i\in
\Lambda}c_i\cdot \omega_i\|<\varepsilon$.
\end{proposition}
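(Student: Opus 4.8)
The plan is to build the approximating element by patching together, over a dense collection of clopen sets, single vector fields from $\Omega$ that approximate $\nu$ locally, and then to assemble these patches into one element of $\Omegawk$ via the Kaplansky--Hilbert sum of Remark \ref{remark:about the sum notation}. The whole argument must be organised around a comeagre set, for reasons explained in the next paragraph.

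First I would establish the local approximation step, which is where the one genuine subtlety lies. By Lemma \ref{lemma:lowersemicontinuous} there is a meagre set $M\subset\Delta$ with $\langle\nu,\nu\rangle(s)=\|\nu(s)\|^2$ for every $s\in\Delta\setminus M$, and since $\Delta$ is a Baire space the set $\Delta\setminus M$ is dense. Fix $s_0\in\Delta\setminus M$ and, using axiom (II) of Definition \ref{definition:continuous Hilbert bundle}, choose $\omega\in\Omega$ with $\omega(s_0)=\nu(s_0)$. Expanding the continuous function $\langle\nu-\omega,\nu-\omega\rangle=\langle\nu,\nu\rangle-\langle\nu,\omega\rangle-\langle\omega,\nu\rangle+\langle\omega,\omega\rangle$ and evaluating at $s_0$ — where $\langle\nu,\nu\rangle(s_0)=\|\nu(s_0)\|^2$ because $s_0\notin M$, while $\langle\nu,\omega\rangle(s_0)=\langle\nu(s_0),\omega(s_0)\rangle$ and $\langle\omega,\omega\rangle(s_0)=\|\omega(s_0)\|^2$ hold for all $s_0$ — one finds $\langle\nu-\omega,\nu-\omega\rangle(s_0)=0$. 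By continuity and the zero-dimensionality of $\Delta$ there is then a clopen neighbourhood $V\ni s_0$ on which $\langle\nu-\omega,\nu-\omega\rangle<\varepsilon^2$. The point I would \emph{emphasise} is that this works only at points of $\Delta\setminus M$: at an arbitrary $s_0$ the continuous function $\langle\nu,\nu\rangle$ may strictly exceed $\|\nu(s_0)\|^2$, so $\langle\nu-\omega,\nu-\omega\rangle(s_0)$ need not be small. This is the main obstacle, and it is circumvented precisely because $\Delta\setminus M$ is dense.

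Next I would pass from local to global by a maximality argument. Let $\mathcal C$ be the collection of clopen sets $V$ for which some $\omega_V\in\Omega$ satisfies $\langle\nu-\omega_V,\nu-\omega_V\rangle<\varepsilon^2$ on $V$; the previous step shows every point of the dense set $\Delta\setminus M$ lies in a member of $\mathcal C$. A Zorn's lemma argument produces a maximal pairwise-disjoint subfamily $\{U_i\}_{i\in\Lambda}\subset\mathcal C$, and maximality, together with the density of $\Delta\setminus M$ and the zero-dimensionality of $\Delta$, forces $\bigcup_i U_i$ to be dense: otherwise a point of $\Delta\setminus M$ lying in the nonempty open set $\Delta\setminus\overline{\bigcup_i U_i}$ would yield a further disjoint clopen member of $\mathcal C$, contradicting maximality. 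Setting $c_i=\chi_{U_i}$, the $\{c_i\}$ are pairwise-orthogonal projections in $C(\Delta)$ with $\sup_i c_i=\chi_{\overline{\bigcup_i U_i}}=1$.

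Finally I would assemble and estimate. Replacing each $\omega_i:=\omega_{U_i}$ by $\chi_{U_i}\cdot\omega_i\in\Omega$ (which leaves $c_i\cdot\omega_i$ unchanged) makes the family uniformly bounded, since the local bound gives $\|(\nu-\omega_i)(s)\|\le|\nu-\omega_i|(s)<\varepsilon$ and hence $\|\omega_i(s)\|\le\|\nu\|+\varepsilon$ on $U_i$. Property (ii) of Definition \ref{definition:Kaplansky-Hilbert module} then yields $\tilde\omega:=\sum_i c_i\cdot\omega_i\in\Omegawk$. On each $U_i$ one has $\tilde\omega(s)=\omega_i(s)$ pointwise, so the two continuous functions $\langle\nu-\tilde\omega,\nu-\tilde\omega\rangle$ and $\langle\nu-\omega_i,\nu-\omega_i\rangle$ agree off a meagre subset of $U_i$ and therefore, being continuous on the open (hence Baire) set $U_i$, agree on all of $U_i$; thus $\langle\nu-\tilde\omega,\nu-\tilde\omega\rangle<\varepsilon^2$ on the dense set $\bigcup_i U_i$, and by continuity $\langle\nu-\tilde\omega,\nu-\tilde\omega\rangle\le\varepsilon^2$ on $\Delta$. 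Hence $\|\nu-\tilde\omega\|=\|\langle\nu-\tilde\omega,\nu-\tilde\omega\rangle\|^{1/2}\le\varepsilon$, and running the construction with $\varepsilon/2$ in place of $\varepsilon$ delivers the strict inequality in the statement.
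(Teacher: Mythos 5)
Your proof is correct and follows essentially the same route as the paper's: approximate $\nu$ locally at points of the dense complement of the meagre set where $\langle\nu,\nu\rangle(s)=\|\nu(s)\|^2$, extract a maximal pairwise-disjoint family of clopen sets via Zorn's lemma, and assemble the approximant with the Kaplansky--Hilbert sum of Remark \ref{remark:about the sum notation}. You are in fact slightly more careful than the paper on two minor points: replacing each $\omega_i$ by $\chi_{U_i}\cdot\omega_i$ to ensure the family is uniformly bounded (as the statement and property (ii) of Definition \ref{definition:Kaplansky-Hilbert module} require), and running the argument with $\varepsilon/2$ to secure the strict inequality.
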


\begin{proof} By Lemma \ref{lemma:lowersemicontinuous}, the function $s\mapsto \|\nu(s)\|$ is
lower semicontinuous; hence, there exists a meagre set $M_{\nu}$
such that the function $s\mapsto \|\nu(s)\|$ is continuous in the
relative topology of $\Delta\setminus M_\nu$. Observe that
$\overline{(\Delta\setminus M_\nu)}=\Delta$.

Fix $s_0\in \Delta\setminus M_\nu$ and let $\omega\in\Omega $ be such that $\omega(s_0)=\nu(s_0)$.
Since
\[
\|\nu(s)-\omega(s)\|^2\,=\,\|\nu(s)\|^2+\|\omega(s)\|^2-2 \re \langle \nu,\omega\rangle(s) \,,
\]
the continuity in the relative topology of $\Delta\setminus M_\nu$
guarantees the existence of an open subset $U_{s_0}$ of $\Delta$
containing $s_0$ such that $\|\nu(s)-\omega(s)\|< \varepsilon/2$
for all $s\in (\Delta\setminus M_\nu)\cap U_{s_0}$. Hence, again
by continuity we get that $\|\nu-\omega\|(s)< \varepsilon$ for
all $s\in \overline U_{s_0}$. The set $\overline U_{s_0}$ is a
clopen subset of $\Delta$ and $\Delta'=\Delta\setminus \overline
U_{s_0}$ is also a Stonean space. Further,
$M_\nu\cap\Delta'=M_\nu\cap (\Delta\setminus \overline U_{s_0})$ is a
meagre set such that the function $s\mapsto \|\nu(s)\|$, for $s\in
\Delta'\setminus (M_\nu\cap\Delta')$, is continuous in the
relative topology.

An application of Zorn's Lemma yields a maximal family
$\{(\chi^{\phantom{a}}_{U_i},\omega_i)\}_{i\in \Lambda}$ such that $U_i\cap
U_j=\emptyset$ for $i\neq j$ and such that
$\|\chi^{\phantom{a}}_{U_i}(\nu-\omega_i)\|< \varepsilon$. Maximality ensures
that $\overline {(\cup_{i\in I}U_i)}=\Delta$, for otherwise we
can enlarge this family by the previous procedure in the Stonean
space $\Delta\setminus\overline {(\cup_{i\in \Lambda}U_i)}$. If we let $c_i=\chi_{U_i}$ for $i\in \Lambda$ then
it is clear by Lemma \ref{lemma:lowersemicontinuous} that $\|\nu-\sum_{i\in
\Lambda}c_i\cdot \omega_i\|<\varepsilon$ as for every $j\in \Lambda$ we have that  $\|c_j(\nu-\sum_{i\in
\Lambda}c_i\cdot \omega_i)\|=\|c_j(\nu-\omega_j)\|< \varepsilon$ and $\bigvee_{i\in \Lambda}\,c_i=1$.\end{proof}

The next result is the key step in the proof of Theorem \ref{theorem:extension of Somerset}.

\begin{proposition}\label{proposition:abelian projections} For every abelian projection
$e\in B(\Omega_{\rm wk})$ and $\varepsilon>0$
there is an essential ideal $I\subset K(\Omega)$ and $x\in M(I)$ such that $\|e-x\|<\varepsilon$.
\end{proposition}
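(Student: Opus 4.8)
The plan is to prove Proposition \ref{proposition:abelian projections} by exhibiting, for a given abelian projection $e\in B(\Omegawk)$, an explicit nearby operator $x$ together with an essential ideal $I$ of $K(\Omega)$ in whose multiplier algebra $x$ lives. First I would invoke the characterisation of abelian projections from the proof of Proposition \ref{proposition:k-omegawk}: by \cite[Lemma 13]{kaplansky1953}, $e$ is abelian if and only if $e=\Theta_{\nu,\nu}$ for some $\nu\in\Omegawk$ with $|\nu|=\langle\nu,\nu\rangle^{1/2}$ a projection in $C(\Delta)$. So I may fix such a $\nu$ and write $p=|\nu|=\chi^{\phantom a}_{W}$ for some clopen set $W\subset\Delta$.

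Next I would approximate $\nu$ in the spirit of Proposition \ref{proposition:pegado2}: there is a family $\{c_i\}_{i\in\Lambda}=\{\chi^{\phantom a}_{U_i}\}$ of pairwise-orthogonal projections in $C(\Delta)$ with supremum $1$ and a bounded family $\{\omega_i\}_{i\in\Lambda}\subset\Omega$ such that $\|\nu-\tilde\omega\|<\varepsilon'$, where $\tilde\omega=\sum_{i\in\Lambda}c_i\cdot\omega_i$ and $\varepsilon'$ is chosen small relative to $\varepsilon$. The natural candidate for $x$ is then $\Theta_{\tilde\omega,\tilde\omega}$, or perhaps a slight truncation of it; since $\Theta_{\cdot,\cdot}$ is jointly continuous and $\nu,\tilde\omega$ are bounded, the bound $\|e-\Theta_{\tilde\omega,\tilde\omega}\|=\|\Theta_{\nu,\nu}-\Theta_{\tilde\omega,\tilde\omega}\|$ is controlled by $\varepsilon'$ via the standard estimate $\|\Theta_{\nu,\nu}-\Theta_{\tilde\omega,\tilde\omega}\|\le\|\nu-\tilde\omega\|(\|\nu\|+\|\tilde\omega\|)$. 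This secures the norm-approximation half of the statement.

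The crux, and the step I expect to be the main obstacle, is producing the essential ideal $I\subset K(\Omega)$ for which $x=\Theta_{\tilde\omega,\tilde\omega}$ is genuinely a multiplier. The difficulty is that $\tilde\omega\in\Omegawk$ need not lie in $\Omega$, so $\Theta_{\tilde\omega,\tilde\omega}$ need not be in $K(\Omega)$ itself; the whole point is that it multiplies a suitable ideal back into $K(\Omega)$. The right ideal to use is built from Lemma \ref{lemma:pegado1}: set $U=\bigcup_{i\in\Lambda}U_i$ (a dense open set) and let $I$ be generated by those compact endomorphisms $\Theta_{\omega,\omega}$ with $\omega\in\Omega$ supported, in an appropriate sense, where the approximation is well-behaved. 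More precisely, I would take $I$ to be the closed ideal of $K(\Omega)$ consisting of (limits of) elements of the form $f\cdot k$ with $f\in C(\Delta)$ vanishing off $U$ and $k\in K(\Omega)$; Lemma \ref{lemma:pegado1} then guarantees that $f\cdot\tilde\omega\in\Omega$ whenever $f$ vanishes on $\Delta\setminus U$, which is exactly what forces $\Theta_{\tilde\omega,\tilde\omega}\cdot(f\cdot k)$ and $(f\cdot k)\cdot\Theta_{\tilde\omega,\tilde\omega}$ back into $K(\Omega)$.

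Finally I would verify that this $I$ is essential. Because $U$ is dense in the Stonean (hence Baire) space $\Delta$, the annihilator of $I$ in $K(\Omega)$ is zero: any $k\in K(\Omega)$ killed by all $f$ supported on $U$ must vanish on the dense set $U$ and hence, by continuity of $s\mapsto\|k(s)\|$, vanish everywhere. With $I$ essential, $x\in M(I)$ realised concretely inside $B(\Omegawk)$, and $\|e-x\|<\varepsilon$ established, the proposition follows. The delicate points to handle carefully are (a) checking that the module-action computations respect the ``non-pointwise'' sum convention of Remark \ref{remark:about the sum notation}, and (b) confirming that multiplication by $\Theta_{\tilde\omega,\tilde\omega}$ on the generators of $I$ lands in $K(\Omega)$ rather than merely in $B(\Omegawk)$ — this is precisely where Lemma \ref{lemma:pegado1} does the essential work.
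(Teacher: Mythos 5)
Your proposal is correct and follows essentially the same route as the paper's proof: the characterisation $e=\Theta_{\nu,\nu}$ with $|\nu|$ a projection via Kaplansky's Lemma 13, the approximation $\tilde\omega=\sum_{i}c_i\cdot\omega_i$ from Proposition \ref{proposition:pegado2}, the candidate $x=\Theta_{\tilde\omega,\tilde\omega}$ with the same norm estimate, the essential ideal supported on the dense open set $U=\bigcup_i U_i$, and Lemma \ref{lemma:pegado1} as the decisive tool for pushing products back into $K(\Omega)$. The only presentational difference is that the paper defines $I=\{a\in K(\Omega):a(s)=0,\ s\in\Delta\setminus U\}$ and then proves that the elements $\sum_j\Theta_{\mu_j,\mu_j}$ with $\mu_j$ vanishing off $U$ are dense in $I_+$, whereas you define $I$ as the closed ideal generated by such elements; with either convention the multiplier verification reduces to the identity $\Theta_{\tilde\omega,\tilde\omega}\,\Theta_{f\mu_1,\mu_2}=\Theta_{(f\langle\mu_1,\tilde\omega\rangle)\cdot\tilde\omega,\,\mu_2}$ and Lemma \ref{lemma:pegado1}, exactly as you indicate.
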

\begin{proof}
Assume that $e\in B(\Omega_{\rm wk})$ is an abelian projection and
let $\varepsilon>0$. Thus, by \cite[Lemma 13]{kaplansky1953},
$e=\Theta_{\nu,\nu}$ for some $\nu\in\Omega_{\rm wk}$ for which
$\langle\nu,\nu\rangle$ is a projection of $C(\Delta)$. By
Proposition \ref{proposition:pegado2}, there is a family
$\{c_i\}_{i\in \Lambda}$ of pairwise orthogonal projections in
$C(\Delta)$ with supremum $1$ and a bounded family
$\{\omega_j\}_{j\in \Lambda}\subset\Omega$ such that
$\|\nu-\tilde\omega\|<\varepsilon/(2\|\nu\|)$, where
$\tilde\omega=\sum_{j\in \Lambda}c_j\cdot \omega_j\in\Omega_{\rm
wk}$. Each $c_j$ is the characteristic function of a clopen set
$U_j$ and the union $U$ of these sets $U_j$ is dense in $\Delta$.

Let $I=\{a\in K(\Omega)\,:\,a(s)=0,\,\forall\,s\in\Delta\setminus U\}$, which is an
essential ideal of $K(\Omega)$.
Define $F^I\subset F_+\subset K(\Omega)_+$ to be the set
\[
F^I \,=\,
\{\displaystyle\sum_{i=1}^n\Theta_{\mu_i,\mu_i}\,:\,
n\in\mathbb N,\,\mu_i\in\Omega,\,\mu_i|_{\,\Delta\setminus U}=0,\ i=1,\ldots,n\}\,.
\]
Suppose that $\eta\in\Omega$ satisfies $\|\eta(s)\|=0$ for all $s\in\Delta\setminus U$,
and consider $\Theta_{\eta,\eta}\in F^I$.
Observe that
$\Theta_{\tilde \omega, \tilde \omega}\,\Theta_{\eta,\eta}\,=\,\Theta_{\langle\eta,\tilde\omega\rangle\cdot\tilde\omega,\,\eta}$, which
is an element of
$I$
because $\langle\eta,\tilde\omega\rangle(s)=\langle\eta(s),\tilde\omega(s)\rangle=0$ for all $s\in\Delta\setminus U$ and
$\langle\eta,\tilde\omega\rangle\cdot\tilde\omega\in\Omega$ by Lemma \ref{lemma:pegado1}.
Hence, $\Theta_{\tilde \omega, \tilde \omega}$ maps the set $F^I$ back into $I$.
Because $F^I$ is dense in $I_+$, as we shall show below,
$\Theta_{\tilde \omega, \tilde \omega}I\subset I$ and a similar computation shows that
$I\Theta_{\tilde \omega, \tilde \omega}\subset I$.
Furthermore, writing $x=\Theta_{\tilde \omega, \tilde \omega}$,
\[
\|e-x\|=\|\Theta_{\nu,\nu}-\Theta_{\tilde \omega, \tilde \omega}\|
\leq (\|\nu\|+\|\tilde \omega\|)\, \|\nu-\tilde \omega\|
<\varepsilon.
\]

It remains to show that $F^I$ is dense in $I_+$. To this end,
assume $\varepsilon'>0$ and $\kappa\in I_+$. Thus, $\kappa(s)=0$
for all $s\in\Delta\setminus U$. Furthermore, by Lemma
\ref{lemma:pos dense}, there exists $h\in F_+$ such that
$\|\kappa-h\|<\varepsilon'$. Let $\tilde h=\chi^{\phantom{a}}_{\Delta\setminus
U}\cdot h$ and note that, as $\kappa\in I$, it is also true that
$\|\kappa-\tilde h\|<\varepsilon'$. Now if $h$ has the form
$\sum_{j=1}^n\Theta_{\mu_j,\mu_j}$ for some
$\mu_j\in\Omega$, then $\tilde
h=\sum_{j=1}^n\Theta_{\chi^{\phantom{a}}_{\Delta\setminus
U}\mu_j,\chi^{\phantom{a}}_{\Delta\setminus U}\mu_j}\in F^I$.
\end{proof}

\begin{proof}[Proof of Theorem \ref{theorem:extension of Somerset}]
Because $K(\Omega)$ is an ideal of $A$, we have $M(A)\subset M(K(\Omega))$. Moreover, as
$K(\Omega)$ is an essential ideal of $A$ we conclude that $\mloc(A)=\mloc(K(\Omega))$ \cite[Proposition 2.3.6]{Ara--Mathieu-book}.
On the other hand, the inclusions
\[
B(\Omega)=M(K(\Omega)) \,\subset\,\mloc(K(\Omega)) \,\subset\,
\mloc\left(\mloc(K(\Omega))\right)\subset B(\Omegawk)
\]
hold by \cite[Theorem 4.6]{frank--paulsen2003}.

Therefore, we are left to show that $\mloc\left(\mloc(K(\Omega))\right)=B(\Omega_{\rm wk})$.
By \cite[Corollary 4.3]{frank--paulsen2003},
an element $z\in I(K(\Omega))=B(\Omegawk)$ belongs to $\mloc(K(\Omega))$
if and only if for every $\varepsilon>0$ there is an essential
ideal $I\subset K(\Omega)$ and a multiplier $x\in M(I)$ such
that $\|z-x\|<\varepsilon$.
By Proposition \ref{proposition:k-omegawk}, $K(\Omegawk)$ is the
(essential) ideal
of $B(\Omega_{\rm wk})$ generated by the abelian projections
of $B(\Omega_{\rm wk})$; thus, by Proposition \ref{proposition:abelian projections},
$K(\Omegawk)\subset \mloc(K(\Omega))$. Hence, $K(\Omegawk)$ is an essential
ideal of $\mloc(K(\Omega))$ and so $M(K(\Omegawk))\subset \mloc\left(\mloc(K(\Omega))\right)$. However, $B(\Omega_{\rm wk})=M(K(\Omegawk))$ by
Kasparov's Theorem
\cite[Theorem 2.4]{Lance-book} (or by a theorem of Pedersen \cite{pedersen1984}); hence,
\[
B(\Omega_{\rm wk})=M\left( K(\Omegawk)\right)\,\subset \mloc\left(\mloc(K(\Omega))\right)\,\subset\,B(\Omega_{\rm
wk})\,,
\]
which yields $\mloc\left(\mloc(K(\Omega))\right)=B(\Omega_{\rm wk})$.
\end{proof}

Somerset has shown that every separable postliminal (that is, type I) C$^*$-algebra $A$ has the property that
$\mloc(\mloc (A))=I(A)$ \cite[Theorem 2.8]{somerset2000}. Theorem \ref{theorem:extension of Somerset}
demonstrates that the same behavior occurs with (certain) nonseparable type I C$^*$-algebras. Somerset's methods
are different from ours in at least two ways: he employs the Baire $*$-envelope of a C$^*$-algebra where we use the injective
envelope
and he uses properties of Polish spaces---spaces that arise from the separability of the algebras under study.
It is reasonable to conjecture that $\mloc(\mloc (A))=I(A)$ for all C$^*$-algebras $A$ that possess a
postliminal essential ideal. To prove such a statement, it would be enough to prove it for any continuous trace
C$^*$-algebra $A$.

\section{Direct Sum Decompositions}\label{section:reduction}

A Kaplansky--Hilbert module $E$ over $C(\Delta)$ is said to be
\emph{homogeneous} \cite{kaplansky1953} if there is a subset $\{\nu_j\}_{j\in\Lambda}\subset E$
-- called an \emph{orthonormal basis} --
such that $\langle\nu_i,\nu_j\rangle=0$ for all $j\not= i$, $|\nu_j|=1$ for all $j$, and $\{\nu_j\}_{j\in\Lambda}^\bot=\{0\}$, where
for any $\nu\in E$, $|\nu|$ is the continuous real-valued function $|\nu|=\langle\nu,\nu\rangle^{1/2}\in C(\Delta)$.

Kaplansky introduced the notion of homogeneous AW$^*$-module with
the aim of reducing the study of abstract AW$^*$-modules to the
slightly more concrete setting in which the modules have an
orthonormal basis. This is justified by the following
result:
\begin{theorem}[\cite{kaplansky1953}]\label{theorem:theorem 1 in Kaplansky1953}
Let $E$ be a Kaplansky-Hilbert module over $C(\Delta)$. Then there
exist orthogonal projections $\{c_i\}_{i\in I}\subset C(\Delta)$
with supremum 1 such that $c_i\,E$ is a homogenous AW$^*$-module
over $c_i\,C(\Delta)$.
\end{theorem}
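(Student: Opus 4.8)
The plan is to prove Theorem \ref{theorem:theorem 1 in Kaplansky1953} by a maximality argument, exploiting the fact that the existence of a \emph{single} orthonormal basis element on a given clopen support can always be arranged using the structure already developed in the excerpt. First I would isolate the key local step: given any nonzero projection $c \in C(\Delta)$, show that the reduced module $c\,\Omegawk$ contains an element $\nu$ with $|\nu|$ a nonzero projection $c'\le c$, i.e.\ $|\nu|=\chi_{U'}$ for some nonempty clopen $U'\subset\Delta$ with $\chi_{U'}\le c$. This is essentially the content of the normalization procedure already used in Proposition \ref{proposition:k-omegawk} and Lemma \ref{lemma:vector field with fixed value}: starting from any nonzero $\mu\in c\,\Omegawk$, the lower semicontinuity of $s\mapsto\|\mu(s)\|^2$ (Lemma \ref{lemma:lowersemicontinuous}) gives a clopen set where $|\mu|$ is bounded away from zero, and dividing by $|\mu|$ on that set (as in the construction of $\nu'=g\cdot\nu$ in Proposition \ref{proposition:k-omegawk}) produces the desired normalized element.

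Next I would define the notion relative to the whole module: for each projection $c$, the reduced module $c\,E$ is again a Kaplansky--Hilbert module over $c\,C(\Delta)$ (the three axioms of Definition \ref{definition:Kaplansky-Hilbert module} restrict to the corner), so the local step applies to every nonzero corner. The main construction is then a Zorn's Lemma argument entirely analogous to the one in Proposition \ref{proposition:pegado2}: I would consider families $\{(c_i,\nu_i)\}_{i\in\Lambda}$ where the $c_i$ are pairwise orthogonal projections, each $\nu_i\in c_i E$ satisfies $|\nu_i|=c_i$, and the $\nu_i$ are mutually orthogonal (automatic from the disjoint supports, since $\langle\nu_i,\nu_j\rangle$ is supported on $c_ic_j=0$). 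Partially ordering such families by extension and taking a maximal element $\{(c_i,\nu_i)\}_{i\in\Lambda}$, maximality forces $\sup_i c_i=1$: if $c=1-\sup_i c_i$ were nonzero, the local step applied to the corner $c\,E$ would yield a new orthonormal element on a nonzero clopen piece of $c$, contradicting maximality.

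The final step is the homogeneity conclusion. Setting $\bar{c}_i = \sup_i c_i$ on each block does not immediately give homogeneity of a \emph{single} corner; instead, I would refine the indexing so that the theorem's $\{c_i\}_{i\in I}$ are the projections onto the homogeneous components of fixed rank. Concretely, for each cardinal $\kappa$ let $c_\kappa$ be the supremum of all clopen pieces on which exactly $\kappa$ of the basis vectors $\{\nu_i\}$ have nonzero (normalized) support; using property (i) of Definition \ref{definition:Kaplansky-Hilbert module} these suprema are well defined, the $c_\kappa$ are pairwise orthogonal with $\sup_\kappa c_\kappa=1$, and restricting the maximal orthonormal family to $c_\kappa\,E$ furnishes an orthonormal basis of constant cardinality $\kappa$, so $c_\kappa\,E$ is homogeneous over $c_\kappa\,C(\Delta)$. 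The $\{\nu_j\}^\bot=\{0\}$ requirement on each block follows from the maximality argument exactly as $\Omega^\bot=\{0\}$ was obtained in Lemma \ref{lemma:density}: any element orthogonal to all $\nu_i$ in the corner would, if nonzero, produce a further normalized basis element, violating maximality.

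The hard part will be the bookkeeping in this last step, namely organizing the maximal orthonormal family into genuinely homogeneous (constant-rank) blocks rather than merely producing a basis of variable local cardinality. This is where Kaplansky's framework is genuinely needed: one must verify that the partition of $\Delta$ into the clopen supports of fixed local rank is given by honest projections in $C(\Delta)$ and that the suprema behave well, which rests essentially on the countable (indeed arbitrary) additivity encoded in properties (i) and (ii) of the Kaplansky--Hilbert module axioms. Since the statement is attributed to \cite{kaplansky1953}, I would in fact simply cite Kaplansky's original argument for this combinatorial decomposition and supply only the local normalization and Zorn step in detail.
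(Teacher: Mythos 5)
Your proposal has a genuine gap, and it sits exactly where you set up the Zorn argument. (Note that the paper itself gives no proof of Theorem \ref{theorem:theorem 1 in Kaplansky1953}; it is quoted from Kaplansky, so your outline must stand on its own.) You require the support projections $c_i$ of your family $\{(c_i,\nu_i)\}$ to be \emph{pairwise orthogonal}. This makes the module-orthogonality of the $\nu_i$ ``automatic,'' as you say, but it also makes it vacuous, and it destroys your final maximality argument: if $\mu\in c_iE$ is a nonzero element orthogonal to $\nu_i$, normalizing it yields a pair $(c',\nu')$ with $c'\le c_i$, and this pair \emph{cannot} be adjoined to your family, because $c'c_i=c'\neq 0$ violates your disjointness requirement. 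So the existence of such a $\mu$ does not contradict maximality, and the needed conclusion $\{\nu_j\}^\perp\cap\,c_iE=\{0\}$ does not follow; the analogy with Lemma \ref{lemma:density} is not available here. A one-line counterexample to the argument: take $\Delta$ a single point and $E=\mathbb{C}^2$. The family consisting of the single pair $(1,e_1)$ is maximal in your ordering (any new projection must be orthogonal to $1$, hence zero), yet $e_2\perp e_1$ is nonzero, so the corner $c_1E=E$ is not spanned by your ``basis.'' Relatedly, your constant-rank bookkeeping collapses: since your supports are disjoint, every point of $\Delta$ meets the support of at most one $\nu_i$, so your cardinals $\kappa$ only take the values $0$ and $1$, and the decomposition you extract says nothing about homogeneity.

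The correct Zorn setup---Kaplansky's---takes families $\{\nu_i\}$ that are pairwise orthogonal in the module sense, $\langle\nu_i,\nu_j\rangle=0$, with each $|\nu_i|$ a projection but with \emph{overlapping supports allowed}; orthogonality is then a genuine constraint, your normalization lemma still applies to produce new members, and maximality does give $\{\nu_i\}^\perp=\{0\}$. What remains is the stratification of $\Delta$ into clopen pieces on which the supports $|\nu_i|$ can be reorganized into an orthonormal family of constant cardinality with full support; this is the actual substance of Kaplansky's Theorem 1, it relies on properties (i) and (ii) of Definition \ref{definition:Kaplansky-Hilbert module} to patch vectors across partitions of unity (in the sense of Remark \ref{remark:about the sum notation}), and it is precisely the step you defer to the citation. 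So as written, your proposal proves the easy normalization step, sets up the Zorn argument in a way that cannot yield a generating family, and outsources the hard combinatorial core; it is not a proof.
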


Note that in the situation of Theorem \ref{theorem:theorem 1 in Kaplansky1953},
for each $i$ there exists a clopen set $\Delta_i\subset\Delta$ with $c_i=\chi_{\Delta_i}$.
The sets $\{\Delta_i\}$ are pairwise disjoint, and $\cup_i\Delta_i$ is dense in $\Delta$.

 In this section we consider the effect of a direct sum decomposition in the structures
 that have been studied in the previous sections, namely the Fell algebra $A$ of the weakly
 continuous Hilbert bundle $(\Delta,\{H_s\}_{s\in\Delta},\Omega,\Omegawk)$, and its local
 multiplier algebra $\mloc(A)$.
 We show that a decomposition of
$\Omegawk$ into a direct sum $\oplus_i c_i\Omegawk$ given by a partition
of the identity $\{c_i\}$ in $C(\Delta)$ leads one to consider two corresponding direct sum
C$^*$-algebras: $\oplus_iA_i$ and $\oplus_i\mloc(A_i)$, where
$A_i$ is a subalgebra of $A$ for all $i$. We prove that $A$ need
not be isomorphic to $\oplus_iA_i$, yet
$\mloc(A)\cong\oplus_i\mloc(A_i)$. The latter result is especially
interesting if one recalls that $\mloc(A)$ is generally not an
AW$^*$-algebra \cite[Theorem 6.13]{ara-mathieu2008}.

\begin{theorem}\label{reduction theorem} Let $(\Delta,\{H_s\}_{s\in \Delta},\Omega)$
be a continuous Hilbert bundle over the Stonean space $\Delta$.
Assume that $\{\Delta_i\}_{i\in I}$ is a family of
pairwise-disjoint clopen subsets of $\Delta$ whose union is dense
in $\Delta$, and for each $i\in I$ let $c_i=\chi_{\Delta_i}\in
C(\Delta)$ and
$\Omega_i=\{\omega_{\vert\Delta_i}\,:\,\omega\in\Omega\}$. Then:
\begin{enumerate}
\item\label{reduction theorem:1} $(\Delta_i, \{H_s\}_{s\in\Delta_i}, \Omega_i)$ is a continuous Hilbert bundle;
\item\label{reduction theorem:2} $(\Omega_i)_{\rm wk}\cong\,c_i\cdot\Omegawk$ as C$^*$-modules;
\item\label{reduction theorem:3} $\Omegawk\cong\bigoplus_i(\Omega_i)_{\rm wk}$ as C$^*$-modules;
\item\label{reduction theorem:4} $B((\Omega_i)_{\rm wk})\cong\,c_i\cdot B(\Omegawk)$ as C$^*$-algebras;
\item\label{reduction theorem:5} $B(\Omegawk)\cong\bigoplus_iB((\Omega_i)_{\rm wk})$ as C$^*$-algebras.
\end{enumerate}
In \ref{reduction theorem:2} and \ref{reduction theorem:3}, the isomorphism is considered together
with the identification $C(\Delta_i)\simeq c_i\,C(\Delta)$.
\end{theorem}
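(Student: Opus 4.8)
The plan is to prove the five assertions in order, obtaining (3) as the ``assembled'' form of (2) and (5) as the assembled form of (4); throughout, the decisive geometric input is that each $\Delta_i$ is \emph{clopen}, not merely open or closed. For (1) I would check the four axioms of Definition \ref{definition:continuous Hilbert bundle} directly. Axioms (II) and (III) are immediate from restriction. For (I), any $f\in C(\Delta_i)$ extends by zero to a $\tilde f\in C(\Delta)$ (continuous precisely because $\Delta_i$ is clopen), so $f\cdot\omega|_{\Delta_i}=(\tilde f\cdot\omega)|_{\Delta_i}\in\Omega_i$. For (IV), given a field $\xi$ on $\Delta_i$ that is locally uniformly approximable by $\Omega_i$, I extend it by zero to a field $\tilde\xi$ on $\Delta$; since $\Delta_i$ and its complement are both open, $\tilde\xi$ is locally uniformly approximable by $\Omega$ (using the approximating fields of $\Omega$ over $\Delta_i$ and the zero field over $\Delta\setminus\Delta_i$), hence $\tilde\xi\in\Omega$ and $\xi\in\Omega_i$. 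For (2) I would exhibit the isomorphism through the two evident maps: restriction $\nu\mapsto\nu|_{\Delta_i}$ from $c_i\Omegawk$ to $(\Omega_i)_{\rm wk}$, and extension by zero $\mu\mapsto\tilde\mu$ in the reverse direction. The only real content is that each lands in the correct space: restricting a weakly continuous field gives one that is weakly continuous with respect to $\Omega_i$, and conversely $\tilde\mu$ is weakly continuous with respect to $\Omega$ because, for each $\omega\in\Omega$, the function $s\mapsto\langle\tilde\mu(s),\omega(s)\rangle$ agrees with the continuous function $s\mapsto\langle\mu(s),\omega|_{\Delta_i}(s)\rangle$ on the clopen set $\Delta_i$ and with $0$ on the clopen complement, hence is continuous on $\Delta$. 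These maps are mutually inverse and $C(\Delta_i)\cong c_iC(\Delta)$-linear, and comparing inner products off meagre sets shows that $\langle\tilde\mu,\tilde\mu'\rangle$ is the zero-extension of $\langle\mu,\mu'\rangle_{(\Omega_i)_{\rm wk}}$, so the module inner product is preserved. (Here part (1) and Theorem \ref{theorem:aw-mod1}, applied to the Stonean space $\Delta_i$, guarantee that $(\Omega_i)_{\rm wk}$ is itself a C$^*$-module.)

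For (3) I would first establish the internal decomposition $\Omegawk=\bigoplus_i c_i\Omegawk$. The projections $c_i$ are pairwise orthogonal in $C(\Delta)$ with $\sup_i c_i=\chi_{\overline{\bigcup_i\Delta_i}}=1$ since $\bigcup_i\Delta_i$ is dense. Property (ii) of Definition \ref{definition:Kaplansky-Hilbert module} then patches any bounded family $(c_i\nu_i)_i$ to a single $\nu\in\Omegawk$ with $c_i\nu=c_i\nu_i$ for all $i$, while properties (i) and (iii) show that $\nu$ is uniquely determined by $(c_i\nu)_i$ and that $\nu=0$ iff every $c_i\nu=0$. Thus $\nu\mapsto(c_i\nu)_i$ identifies $\Omegawk$ with the bounded direct sum $\bigoplus_i c_i\Omegawk$ (the $C(\Delta)$-module of bounded families with glued inner product), and substituting the isomorphisms of (2) gives $\Omegawk\cong\bigoplus_i(\Omega_i)_{\rm wk}$.

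For (4) and (5) I pass to endomorphism algebras. By Kaplansky's Theorem~7 (quoted in Section \ref{section:injective envelops}) the centre of $B(\Omegawk)$ is $C(\Delta)$, so each $c_i$ is a central projection. Restriction to the complemented submodule $c_i\Omegawk$ gives a $*$-isomorphism $c_iB(\Omegawk)\cong B(c_i\Omegawk)$: an operator $c_iT$ annihilates $(1-c_i)\Omegawk$ and preserves $c_i\Omegawk$, and conversely every adjointable operator on $c_i\Omegawk$ extends by zero to an element of $c_iB(\Omegawk)$. Composing with (2) yields $c_iB(\Omegawk)\cong B((\Omega_i)_{\rm wk})$, which is (4). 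Finally, because $B(\Omegawk)$ is an AW$^*$-algebra and $\{c_i\}$ is an orthogonal family of central projections with supremum $1$, the map $T\mapsto(c_iT)_i$ is an injective $*$-homomorphism, and any bounded family $(x_i)$ with $x_i\in c_iB(\Omegawk)$ is assembled into a single $T\in B(\Omegawk)$ by the block-decomposition property of AW$^*$-algebras along such a family; this exhibits $B(\Omegawk)$ as the bounded ($\ell^\infty$-)direct sum of its central reductions $c_iB(\Omegawk)$. Combining with (4) gives (5).

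The genuinely structural steps are the patchings in (3) and (5)---the surjectivity that assembles a bounded family into a single element. These rest entirely on the AW$^*$ completeness encoded in Kaplansky property (ii) for the module $\Omegawk$ and in the analogous central-decomposition property for the AW$^*$-algebra $B(\Omegawk)$, and I expect this to be the main obstacle, in the sense that it is where the abstract AW$^*$-structure is indispensable. By contrast, (1), (2), and (4) reduce to the bookkeeping of restricting and extending fields across the clopen set $\Delta_i$ and gluing continuous functions, which is routine once one exploits that the $\Delta_i$ are clopen rather than merely open or closed.
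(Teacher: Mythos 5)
Your proposal is correct and follows essentially the same route as the paper's proof: restriction and extension-by-zero across the clopen sets $\Delta_i$ for parts (1), (2), and (4), and the Kaplansky patching property (ii) of Theorem \ref{theorem:aw-mod1} (via Remark \ref{remark:about the sum notation}) for the assembly steps in (3) and (5). The only divergence is in (5), where the paper lifts each $b_i$ to some $b^i\in B(\Omegawk)$ and patches at the module level by setting $c_i b\nu=c_i b^i\nu$, proving injectivity from $b^*b=\sup_i b^*c_ib$, whereas you invoke the general central C$^*$-sum decomposition of an AW$^*$-algebra along an orthogonal family of central projections with supremum $1$ --- a standard, citable fact, but one the paper avoids needing because its module-level argument is self-contained.
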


\begin{proof}
Being clopen in $\Delta$, each $\Delta_i$ is itself a Stonean
space, and it is easy to see that $C(\Delta_i)\cong c_i\,C(\Delta)$

\noindent\textit{\ref{reduction theorem:1}}. For axiom (I) in Definition \ref{definition:continuous Hilbert bundle}, we aim to show that $\Omega_i$ is a $C(\Delta_i)$ module. Let $\omega\in\Omega$ and consider
$\omega_i=\omega|_{\Delta_i}$. Choose any $f_i\in C(\Delta_i)$. As $\Delta_i$ is clopen,
$f_i$ can be extended to $F_i\in C(\Delta)$
such that $f_i=F_i|_{\Delta_i}$, and $F_i|_{\Delta\setminus\Delta_i}=0$.
The action $f_i\cdot\omega_i=(F_i\cdot\omega)|_{\Delta_i}$ gives
$\Omega_i$ the structure of a $C(\Delta_i)$ module. Axioms (II) and (III) of Definition \ref{definition:continuous Hilbert bundle} are trivially satisfied.

For axiom (IV), let $\xi:\Delta_i\rightarrow\bigsqcup_{s\in\Delta_i}\,H_s$ be a
vector field
such that for every $s_0\in \Delta_i$ and $\varepsilon>0$ there is an open set $U_i\subset \Delta_i$
containing $s_0$ and a $\omega_i\in \Omega_i$ with $\|\omega_i(s)-\xi(s)\|<\varepsilon$
for all $s\in U_i$. Let
$\Xi: \Delta_i\rightarrow\bigsqcup_{s\in\Delta}\,H_s$ be the vector field that coincides with $\xi$ on $\Delta_i$
and is identically zero off $\Delta_i$.
By the definition of $\Omega_i$,
there is $\omega\in\Omega$ such that $\omega_i=\omega|_{\Delta_i}$.
The set $U_i$ is also open in $\Delta$, and
$\|\omega(s)-\Xi(s)\|<\varepsilon$ for all $s\in U_i$.
If $s_0\not\in\Delta_i$ choose any
open set $V_i$ containing $s_0$ such that $V_i\cap U_i=\emptyset$ and let $\omega\in\Omega$ be arbitrary; then
$0=\|\chi_{\Delta_i}(s)\omega(s)-\Xi(s)\|<\varepsilon$ for all $s\in V_i$. Since $\chi_{\Delta_i}\cdot\omega\in\Omega$
and since $\Omega$ is closed under local uniform approximation,
$\Xi\in\Omega$, whence $\xi\in\Omega_i$.

\noindent\textit{\ref{reduction theorem:2}}. Let
$T_i:c_i\,\Omega_{\rm wk}\rightarrow (\Omega_i)_{\rm wk}$
be given by $T_i(c_i\nu)=\nu|_{\Delta_i}$. It is clear that
$T_i$ is well defined, linear, bounded, and has trivial kernel; to show that it is onto,
note that if $\nu_i\in (\Omega_i)_{\rm wk}$,
then---since $\Delta_i$ is clopen---the
vector field $\nu:\Delta\rightarrow\bigsqcup_{s\in\Delta}\,H_s$ defined by $\nu(s)=0$, for $s\not\in\Delta_i$, and
$\nu(s)=\nu_i(s)$, for $s\in \Delta_i$, has the property that $\langle\omega,\nu\rangle\in C(\Delta)$,
for all $\omega\in\Omega$; so $\nu\in\Omegawk$  and $\nu_i=T_i(c_i\nu)$. It is also easy to check
that $T_i$ preserves inner products.

\noindent\textit{\ref{reduction theorem:3}}. Let $T:\Omegawk\rightarrow\bigoplus_i(\Omega_i)_{\rm wk}$,
given by $T\nu=\left( T_i(c_i\nu)\right)_{i\in I}$.
The previous paragraph and Lemma \ref{lemma:norm off a meagre set}
show that $T$ is an isometry; we show now that $T$ is onto. Suppose that
$\nu'=(\nu_i)_{i\in I}\in\bigoplus_i(\Omega_i)_{\rm wk}$.
For each $i\in I$ let $\tilde\nu_i$ denote the vector field on $\Delta$ that coincides
with $\nu_i$ on $\Delta_i$ and vanishes elsewhere.
Then $\tilde\nu_i\in\Omegawk$ and $T_i(c_i\tilde\nu_i)=\nu_i$. Hence, if
$\nu=\sum_ic_i\tilde\nu_i$ as in Remark \ref{remark:about the sum notation}, we have $\nu\in\Omegawk$ and $T\nu=\nu'$. Thus, $\Omegawk$ and
$\bigoplus_i(\Omega_i)_{\rm wk}$ are isomorphic Banach spaces.
Similar arguments show that $\bigoplus_i(\Omega_i)_{\rm wk}$
is a $C(\Delta)$-module and that $T$ is module isomorphism. Hence, $\Omegawk\cong\bigoplus_i(\Omega_i)_{\rm wk}$ as C$^*$-modules.

\noindent\textit{\ref{reduction theorem:4}}. Let $\rho_i:c_i\,B(\Omegawk)\rightarrow B((\Omega_i)_{\rm wk})$ be given by
$\rho_i(c_ib)\,T_i(c_i\nu)=(b\nu)|_{\Delta_i}$. This map is well-defined because if
$c_ib_1=c_ib_2$ then for any $\nu\in\Omegawk$ we have $(b_1\nu)|_{\Delta_i}
=(c_ib_1\nu)|_{\Delta_i}=(c_ib_2\nu)|_{\Delta_i}=(b_2\nu)|_{\Delta_i}$.
A similar computation shows that $\rho_i$ is one-to-one, and linearity is clear. To see
that $\rho_i$ is onto, let $b_i\in B((\Omega_i)_{\rm wk})$. Consider the injection
$\tilde{}:(\Omega_i)_{\rm wk}\rightarrow \Omegawk$ where $\tilde{\nu_i}\in\Omegawk$
is the vector field that agrees with $\nu_i$ on $\Delta_i$ and is 0 elsewhere. Let
$b\in B(\Omegawk)$ be the operator given by $b\nu=\widetilde{b_i(\nu|_{\Delta_i})}$.
Then $\rho_i(c_ib)(T_ic_i\nu)=(b\nu)|_{\Delta_i}=\widetilde{b_i(\nu|_{\Delta_i})}|_{\Delta_i}
=b_i(\nu|_{\Delta_i})=b_i\,(T_ic_i\nu)$, so $\rho_i(c_ib)=b_i$.

\noindent\textit{\ref{reduction theorem:5}}. Let
$\rho: B(\Omegawk)\rightarrow\bigoplus_i B((\Omega_i)_{\rm wk})$ be the map
$\rho(b)=(\rho_i(c_ib))_{i\in I}\,.$
It is clear that $\rho$ is a homomorphism. If $\rho(b)=0$ for some $b\in B(\Omegawk)$, then
-- as each $\rho_i$ is one-to-one -- $c_ib=0$ for all $i$; this implies that
$b^*b=b^*(\sup_i(c_i\cdot I))b=\sup_i(b^*c_ib)=0$ by \cite[Corollary 4.10]{hamana1981},
so $b=0$ and $\rho$ is
one-to-one. To show that $\rho$ is onto, let $(b_i)_i\in \bigoplus_i B((\Omega_i)_{\rm wk})$;
as each $\rho_i$ is onto, there exist operators $b^i\in B(\Omegawk)$
with $\rho_i(c_ib^i)=b_i$. Define $b\in B(\Omegawk)$ by $b\nu=\sum_i\,c_ib^i\nu$
(in the sense of Remark \ref{remark:about the sum notation}; that is, $c_ib\nu=c_ib^i\nu$). Then
$\rho_i(c_ib)\nu|_{\Delta_i}=(c_ib\nu)|_{\Delta_i}=(c_ib^i\nu)|_{\Delta_i}
=\rho_i(c_ib^i)\nu|_{\Delta_i}=b_i\nu|_{\Delta_i}$. So $\rho(b)=(b_i)_i$.
\end{proof}

\begin{proposition}\label{proposition:examples 1}  Assume the notation, hypotheses, and
conclusions of Theorem \ref{reduction theorem}. Then
there exists an example where the canonical embedding
$\Omega\hookrightarrow\bigoplus_i\Omega_i$ (via the isometry $T$ from the proof of
\ref{reduction theorem:3} in Theorem \ref{reduction theorem})
is not onto. In particular, $\Omega$ is properly contained in $\Omegawk$.
\end{proposition}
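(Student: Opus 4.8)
The plan is to produce a concrete example built on the trivial Hilbert bundle over $\Delta=\beta\mathbb N$, the Stone--\v Cech compactification of the discrete space $\mathbb N$. Since $\mathbb N$ is discrete, $\beta\mathbb N$ is extremally disconnected, hence Stonean, and $C(\beta\mathbb N)\cong\ell^\infty(\mathbb N)$. Fix an infinite-dimensional Hilbert space $H$ and take the trivial bundle $\Omega=C(\beta\mathbb N,H)$, with every fibre equal to $H$. For the clopen partition I would use the singletons $\Delta_n=\{n\}$, $n\in\mathbb N$: these are pairwise disjoint, clopen (the points of $\mathbb N$ are isolated in $\beta\mathbb N$), and their union $\mathbb N$ is dense in $\beta\mathbb N$, so the hypotheses of Theorem \ref{reduction theorem} are met. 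Because each $\Delta_n$ is a single point, axiom (II) of Definition \ref{definition:continuous Hilbert bundle} gives $\Omega_n=(\Omega_n)_{\rm wk}\cong H$; hence $\bigoplus_n\Omega_n\cong\bigoplus_n(\Omega_n)_{\rm wk}$ is identified with the bounded sequences $\{(\xi_n)_n:\xi_n\in H,\ \sup_n\|\xi_n\|<\infty\}$, and the embedding $\Omega\hookrightarrow\bigoplus_n\Omega_n$ is the map sending $\omega$ to $(\omega(n))_n$.

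The crux is to identify the image of this embedding and to show that it is proper. First I would observe that if $\omega\in C(\beta\mathbb N,H)$, then $\omega(\beta\mathbb N)$ is norm-compact, so $\{\omega(n):n\in\mathbb N\}$ has relatively compact range; conversely, since $\mathbb N$ is discrete, any sequence $(\xi_n)$ whose range has compact closure $K\subset H$ defines a map $\mathbb N\to K$ that extends, by the universal property of $\beta\mathbb N$, to a continuous $H$-valued function on $\beta\mathbb N$. Thus the image of $\Omega$ in $\bigoplus_n\Omega_n$ is exactly the set of bounded sequences with relatively norm-compact range. Now I would take $(\xi_n)=(e_n)$ an orthonormal sequence in $H$: it is bounded but has no norm-convergent subsequence, since $\|e_n-e_m\|=\sqrt 2$ for $n\ne m$, so its range is not relatively compact. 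Hence $(e_n)\in\bigoplus_n\Omega_n$ is not in the image of $\Omega$, and the embedding fails to be onto.

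Finally, for the ``in particular'' clause I would argue abstractly: by Theorem \ref{reduction theorem} the isometry $T$ carries $\Omegawk$ onto $\bigoplus_n(\Omega_n)_{\rm wk}$ while carrying $\Omega$ into $\bigoplus_n\Omega_n\subseteq\bigoplus_n(\Omega_n)_{\rm wk}$; were $\Omega=\Omegawk$, this would force $T(\Omega)=\bigoplus_n\Omega_n$, i.e. the embedding would be onto, contradicting the previous paragraph. Hence $\Omega\subsetneq\Omegawk$. Alternatively, and more concretely, one can exhibit the witnessing field directly: define $\nu$ by $\nu(n)=e_n$ for $n\in\mathbb N$ and $\nu(p)=0$ for $p$ in the corona $\beta\mathbb N\setminus\mathbb N$; for each $\omega\in\Omega$ the function $n\mapsto\langle\nu(n),\omega(n)\rangle=\langle e_n,\omega(n)\rangle$ lies in $c_0(\mathbb N)$ (because $\omega(n)\to\omega(p)$ in norm and $\langle e_n,\omega(p)\rangle\to 0$ by Bessel's inequality), and every function in $c_0$ extends continuously over $\beta\mathbb N$ by vanishing on the corona, so $\nu$ is weakly continuous and bounded, i.e. $\nu\in\Omegawk$, yet $\nu\notin\Omega$ since $s\mapsto\|\nu(s)\|$ is not continuous. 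The only delicate point in the whole argument is precisely this verification that the glued orthonormal field is genuinely weakly continuous while failing norm continuity; everything else is bookkeeping with the identifications supplied by Theorem \ref{reduction theorem}.
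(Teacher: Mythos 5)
Your proposal is correct, but its main argument runs on a genuinely different mechanism than the paper's. The paper keeps the general setting: any Stonean $\Delta$ with pairwise disjoint clopen sets $\{\Delta_i\}_{i\in I}$ whose union is dense but proper, the trivial bundle $\Omega=C(\Delta,H)$ with $\{e_i\}_{i\in I}$ orthonormal, and the glued field $\omega$ equal to $e_i$ on $\Delta_i$; it then shows $\omega\notin\Omega$ by a direct weak-continuity argument: testing against the constant field $\eta\equiv\omega(s_0)$ at a point $s_0\notin\bigcup_i\Delta_i$, a net $s_\alpha\to s_0$ forces $\langle\omega(s_\alpha),\eta(s_\alpha)\rangle\to 1$, while the fact that the index $i(\alpha)$ runs through infinitely many values forces, via Bessel's inequality, the same limit to be $0$. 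You instead specialize to $\Delta=\beta\mathbb N$ with the singletons $\{n\}$ (a legitimate instance of the hypotheses, and in fact the most concrete instance of the paper's own construction), and you replace the weak-continuity computation by a structural characterization: under the identification $\bigoplus_n\Omega_n\cong\{(\xi_n):\sup_n\|\xi_n\|<\infty\}$, the image of $C(\beta\mathbb N,H)$ consists exactly of the sequences with relatively norm-compact range (continuous image of a compact space in one direction, the universal property of $\beta\mathbb N$ in the other), and an orthonormal sequence is bounded but not relatively compact. That is a complete and clean proof of non-surjectivity, and your abstract deduction of $\Omega\subsetneq\Omegawk$ from part \textbf{iii} of Theorem \ref{reduction theorem} is valid, since here $(\Omega_n)_{\rm wk}=\Omega_n\cong H$ because each $\Delta_n$ is a single point. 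What the paper's route buys is generality (the failure is exhibited for every such decomposition of every such $\Delta$); what yours buys is concreteness and the avoidance of net arguments. One caveat: your ``alternative, more concrete'' verification is essentially the paper's proof, and its parenthetical justification is imprecise as written --- for a fixed corona point $p$ it is \emph{not} true that $\omega(n)\to\omega(p)$ in norm as $n\to\infty$; one must either argue along the trace on $\mathbb N$ of the neighbourhood filter of a suitably chosen cluster point $p$, or use relative compactness of $\{\omega(n)\}$ to extract a norm-convergent subsequence and then apply Bessel. Since that passage is only your backup argument, this imprecision does not affect the correctness of the proposal.
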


\begin{proof} Take $\Delta$ and the family of clopen subsets $\{\Delta_i\}_{i\in I}$
in Theorem \ref{reduction theorem} to be such that
$\bigcup_{i\in I}\Delta_i\neq \Delta$. Thus,
$I$ is an infinite set. Let $H$ be a Hilbert space with orthonormal basis $\{e_i\}_{i\in I}$ and consider the trivial Hilbert bundle
$\Omega=C(\Delta,H)$ of all continuous functions $\omega:\Delta\rightarrow H$. As in Theorem \ref{reduction theorem}, let
$\Omega_i=C(\Delta_i,H)$.

For each $i\in I$, set $\omega_i\in \Omega$ with $\omega_i(s)=e_i$ for all $s$
and consider $(\omega_i)_{i\in I}\in\bigoplus_i\Omega_i$.
Under the isomorphism of Theorem \ref{reduction theorem}, this
element $(\omega_i)_{i\in I}$ is identified with
$\omega=\sum_{i\in I} \chi_{\Delta_i}\cdot \tilde\omega_i\in \Omegawk$ (in the sense of Remark \ref{remark:about the sum notation}), where $\tilde\omega_i$ is any
element of $\Omega$ that agrees with $\omega_i$ on $\Delta_i$ and vanishes off $\Delta_i$.
Under this identification,
$\omega\notin\Omega$; that is, the function $s\mapsto\|\omega(s)\|$ fails to be continuous on $\Delta$.
We argue this by contradiction.

Assume that $s\mapsto\|\omega(s)\|$ is continuous on $\Delta$. Because
$\|\omega(s)\|=1$ for all $s\in \cup_{i\in I} \Delta_i$, continuity implies that $\|\omega(s)\|=1$ for $s\in \Delta$.
Choose $s_0\in \Delta\setminus (\cup _{i\in I} \Delta_i)$ and let $(s_\alpha)_{\alpha\in \Lambda}\subset\cup _{i\in I} \Delta_i$ be a net such that $s_\alpha\rightarrow s_0$.
Let $\eta\in \Omega$ be the constant field $\eta(s)=\omega(s_0)$, for all $s\in \Delta$.
Since $\omega\in \Omegawk$, we have
\begin{equation}\label{equation:examples-1:1}
\lim_\alpha\ \langle \omega(s_\alpha),\eta(s_\alpha)\rangle\,=\,\langle \omega(s_0),\eta(s_0)\rangle\,=\,\langle \omega(s_0),\omega(s_0)\rangle\,=\,1\,.
\end{equation}
For each $\alpha\in \Lambda$ let $i(\alpha)\in I$ be such that $s_\alpha\in \Delta_{i(\alpha)}$. Thus,
for every $\alpha\in \Lambda$, $I_\alpha=\{i(\beta):\ \beta\in I,\ \beta\geq \alpha\}$ is an infinite set (for otherwise $s_0\in \Delta_i$ for some $i\in I$).
Therefore,
\begin{equation}\label{equation:examples-1:2}
\lim_\alpha\ \langle \omega(s_\alpha),\eta(s_\alpha)\rangle\,=\,\lim_\alpha\ \langle e_{i(\alpha)},\omega(s_0)\rangle\,=\,0\,.
\end{equation}
As \eqref{equation:examples-1:1} and \eqref{equation:examples-1:2} cannot be true simultaneously, we obtain a contradiction. Hence, $\omega\notin \Omega$.
\end{proof}

Our second reduction theorem below notes some consequences of Theorem \ref{reduction theorem}
when applied to the injective envelope and local multiplier algebras of the Fell algebra $A$
associated to a continuous Hilbert bundle.

\begin{theorem}\label{theorem:reduction theorem-2} Let $(\Delta,\{H_t\}_{t\in \Delta},\Omega)$
be a continuous Hilbert bundle over the Stonean space $\Delta$ and let $A=(\Delta,\{K(H_t\},\Gamma)$
denote the associated
continuous trace C$^*$-algebra of Fell.
Assume that $\{\Delta_i\}_{i\in I}$ is a family of
pairwise-disjoint clopen subsets of $\Delta$ whose union is dense
in $\Delta$, and for each $i\in I$ let $c_i=\chi_{\Delta_i}\in
C(\Delta)$ and
$\Omega_i=\{\omega_{\vert\Delta_i}\,:\,\omega\in\Omega\}$. Then:
\begin{enumerate}
\item\label{theorem:reduction theorem-2:1} if $A_i$ denotes the Fell algebra of $(\Delta_i, \{H_s\}_{s\in\Delta_i}, \Omega_i)$, then $A_i\cong c_i\cdot A$;
\item\label{theorem:reduction theorem-2:2} $I(A_i)=B((\Omega_i)_{\rm wk})$;
\item\label{theorem:reduction theorem-2:3} $I(A)\cong\bigoplus_{i\in I}I(A_i)$;
\item\label{theorem:reduction theorem-2:4} $\mloc(A)\cong\bigoplus_{i\in I} \mloc(A_i)$.
\end{enumerate}
\end{theorem}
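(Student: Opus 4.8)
The plan is to treat the four parts in order, reducing \ref{theorem:reduction theorem-2:2} and \ref{theorem:reduction theorem-2:3} to results already established and reserving the real work for \ref{theorem:reduction theorem-2:4}. For \ref{theorem:reduction theorem-2:1}, I would identify $c_i\cdot A$ with the closed ideal $\{a\in A:\ a|_{\Delta\setminus\Delta_i}=0\}$ and show that the restriction $r_i:c_i\cdot A\to A_i$, $a\mapsto a|_{\Delta_i}$, is a $*$-isomorphism. It is clearly a homomorphism, and since $\Delta_i$ is clopen and $a$ vanishes off $\Delta_i$, one has $\|a\|=\max_{s\in\Delta_i}\|a(s)\|$, giving isometry and hence injectivity. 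Weak continuity, almost finite-dimensionality, and norm-continuity of $a|_{\Delta_i}$ with respect to $\Omega_i$ follow by restricting the corresponding properties of $a\in A$ and recalling $\Omega_i=\{\omega|_{\Delta_i}:\omega\in\Omega\}$; vanishing at infinity is automatic because $\Delta_i$ is compact. For surjectivity I would extend a given $a_i\in A_i$ by zero to a field $\hat a_i$ on $\Delta$, noting that since both $\Delta_i$ and its complement are clopen, $\hat a_i$ inherits all the defining properties, so $\hat a_i\in c_i\cdot A$ with $r_i(\hat a_i)=a_i$.

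Part \ref{theorem:reduction theorem-2:2} is then immediate: each $\Delta_i$ is clopen in $\Delta$, hence Stonean, so the bundle $(\Delta_i,\{H_s\}_{s\in\Delta_i},\Omega_i)$ satisfies the hypotheses of Theorem \ref{theorem:inclusion}, yielding $I(A_i)=B((\Omega_i)_{\rm wk})$. Part \ref{theorem:reduction theorem-2:3} follows by chaining three identifications: $I(A)=B(\Omegawk)$ by Theorem \ref{theorem:inclusion}, then $B(\Omegawk)\cong\bigoplus_iB((\Omega_i)_{\rm wk})$ by Theorem \ref{reduction theorem}\,\ref{reduction theorem:5}, and finally $B((\Omega_i)_{\rm wk})=I(A_i)$ by \ref{theorem:reduction theorem-2:2}.

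The substance is \ref{theorem:reduction theorem-2:4}. I would work through the isometric isomorphism $\rho:B(\Omegawk)\to\bigoplus_iB((\Omega_i)_{\rm wk})$ of Theorem \ref{reduction theorem}\,\ref{reduction theorem:5}, writing $z_i=\rho_i(c_iz)$, and use the criterion from \cite[Corollary 4.3]{frank--paulsen2003} (exactly as in the proof of Theorem \ref{theorem:extension of Somerset}) that $z\in\mloc(A)=\mloc(K(\Omega))$ iff for every $\varepsilon>0$ there are an essential ideal $J\subset K(\Omega)$ and $x\in M(J)$ with $\|z-x\|<\varepsilon$; the same criterion applies to each $\mloc(A_i)=\mloc(K(\Omega_i))$. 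After identifying $K(\Omega_i)$ with $c_i\cdot K(\Omega)$ (the analogue of \ref{theorem:reduction theorem-2:1} for compact endomorphisms), I would prove $\rho(\mloc(A))=\bigoplus_i\mloc(A_i)$, the $\ell^\infty$-direct sum, by two inclusions. For $\subseteq$, given $z\in\mloc(A)$ and $\varepsilon>0$, pick essential $J$ and $x\in M(J)$ with $\|z-x\|<\varepsilon$; then $c_iJ$ is essential in $c_iK(\Omega)\cong K(\Omega_i)$ (essentiality survives compression since for $a\in c_iK(\Omega)$ one has $a(c_iJ)=aJ$, so $a(c_iJ)=0$ forces $a=0$), and $c_ix\in M(c_iJ)$, whence $z_i\in\mloc(A_i)$ with $\|z_i-\rho_i(c_ix)\|\le\|z-x\|<\varepsilon$; the family is bounded because $\sup_i\|z_i\|=\|z\|$. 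For $\supseteq$, given a bounded family $(z_i)$ with $z_i\in\mloc(A_i)$, set $z=\rho^{-1}((z_i))$; for each $\varepsilon>0$ choose essential $J_i\subset K(\Omega_i)$ and $x_i\in M(J_i)$ with $\|z_i-x_i\|<\varepsilon$, then glue them into $J=c_0\text{-}\bigoplus_iJ_i\subset I_0:=\{a\in K(\Omega):a|_{\Delta\setminus\bigcup_i\Delta_i}=0\}$ and $x=\rho^{-1}((x_i))$, the latter bounded since $\|x_i\|\le\|z_i\|+\varepsilon$. After checking that $J$ is essential in $K(\Omega)$ and $x\in M(J)$, the estimate $\|z-x\|=\sup_i\|z_i-x_i\|\le\varepsilon$ closes the argument, and combining with $\mloc(A)=\mloc(K(\Omega))$ gives the claim.

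The main obstacle is the $\supseteq$ direction of \ref{theorem:reduction theorem-2:4}, specifically verifying that the glued multiplier $x$ genuinely carries $J$ back into $K(\Omega)$ rather than merely into $B(\Omegawk)$. The decisive point is that $K(\Omega)$ behaves like a $c_0$-object relative to the partition: for $a\in K(\Omega)$ one has $\|\rho_i(c_ia)\|\to 0$, so $I_0\cong c_0\text{-}\bigoplus_iK(\Omega_i)$, and since each $x_i$ is a genuine multiplier of $K(\Omega_i)$ with $\sup_i\|x_i\|<\infty$, the products $x_i\,\rho_i(c_ia)$ lie in $K(\Omega_i)$ and still tend to $0$, placing $xa$ back in $I_0\subset K(\Omega)$. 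Essentiality of the glued $J$ runs along similar lines: an element of $K(\Omega)$ annihilating every $J_i$ has each $\rho_i(c_ia)$ annihilating the essential ideal $J_i$, hence vanishes on each $\Delta_i$ and so on the dense set $\bigcup_i\Delta_i$, and therefore everywhere.
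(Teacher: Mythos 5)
Your treatment of (i)--(iii) matches the paper's in substance: restriction and zero-extension across the clopen sets for (i), Theorem \ref{theorem:inclusion} for (ii), and for (iii) the paper invokes Hamana's identity $I(c_iA)=c_iI(A)$ together with Theorem \ref{reduction theorem}, while you chain $I(A)=B(\Omegawk)\cong\bigoplus_iB((\Omega_i)_{\rm wk})=\bigoplus_iI(A_i)$, an equivalent route. In (iv) you genuinely diverge. The paper encodes essential ideals of $A$ by open dense subsets $U\subset\Delta$: in the forward direction it compresses to $U_i=U\cap\Delta_i$; in the backward direction it glues $U=\bigcup_iU_i$, sets $y=\rho^{-1}((y_i))$, and verifies multiplier-hood at the level of \emph{vector fields} (its Claims 1 and 2: $y\omega\in\Omega$ and vanishes off $U$ whenever $\omega$ does, proved from the uniform bound $C=\sup_i\|y_i\|$ and lower semicontinuity of $s\mapsto\|(y\omega)(s)\|$), finishing with the density of rank-one sums (Lemma \ref{lemma:pos dense}). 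You instead compress and glue at the level of the algebras: $c_iJ$ essential in $c_iK(\Omega)\cong K(\Omega_i)$ and $c_ix\in M(c_iJ)$ for the forward inclusion, and a $c_0$-direct sum of the $J_i$ for the backward one. This avoids both the spectrum/open-set dictionary and the pointwise semicontinuity analysis; your bound $\sup_i\|x_i\|<\infty$ plays exactly the role of the paper's Claim 2, and your glued ideal is in fact the paper's $K(\Omega)\cap J$ for matching choices of the $J_i$.

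There is, however, one genuine error, and it sits at what you call the decisive point: the assertion that $\|\rho_i(c_ia)\|\to0$ for every $a\in K(\Omega)$ is false. If $\omega\in\Omega$ satisfies $\|\omega(s)\|=1$ for all $s$ (e.g.\ a constant field in the trivial bundle of Proposition \ref{proposition:examples 1}), then $\Theta_{\omega,\omega}\in K(\Omega)$ while $\|\rho_i(c_i\Theta_{\omega,\omega})\|=\sup_{s\in\Delta_i}\|\omega(s)\|^2=1$ for every $i$; this is precisely the phenomenon behind Proposition \ref{remark:examples 2}. Your claim would force $K(\Omega)=I_0:=\{a\in K(\Omega):\,a(s)=0 \mbox{ for } s\in\Delta\setminus\bigcup_i\Delta_i\}$, which fails whenever $\bigcup_i\Delta_i\neq\Delta$. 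The statement you actually want---that elements of $I_0$ are $c_0$-sums---is true but requires an argument you do not supply (a cluster-point argument using pairwise disjointness of the clopen $\Delta_i$ and continuity of $s\mapsto\|a(s)\|$). Fortunately, your proof needs much less than either statement: since you \emph{define} $J$ as the norm closure of the finitely supported sums $\sum_{i\in F}\rho_i^{-1}(b_i)$ with $b_i\in J_i$, any $a\in J$ is a norm limit of such sums, hence $xa$ is a norm limit of the sums $\sum_{i\in F}\rho_i^{-1}(x_ib_i)$, each of which lies in $J$; closedness of $J$ gives $xa\in J$ (and similarly $ax\in J$), with no reference to $I_0$ at all. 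With this repair, together with the routine verification that $\rho_i$ carries $c_iK(\Omega)$ onto $K(\Omega_i)$, your proof of (iv) is complete.
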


\begin{proof}
Let $A_i=(\Delta_i,\{K(H_s)\}_{s\in\Delta},\Gamma_i)$ denote the Fell C$^*$-algebra
associated to the Hilbert bundle
$(\Delta_i, \{H_s\}_{s\in\Delta_i}, \Omega_i)$. That is, $\Gamma_i$
consists of all weakly continuous almost finite-dimensional operator fields
$a_i:\Delta_i\rightarrow \bigsqcup_{s\in\Delta_i}\,K(H_s)$ such that $s\mapsto\|a_i(s)\|$
is continuous. We have that $B((\Omega_i)_{\rm wk})$ is a type I AW$^*$-algebra
with centre $C(\Delta_i)$.

\noindent\textit{\ref{theorem:reduction theorem-2:1}}. For each $a_i\in\Gamma_i$ there is an $a\in\Gamma$ such that $a_i=a|_{\Delta_i}$.
To verify this, let $a:\Delta_i\rightarrow \bigsqcup_{s\in\Delta}\,K(H_s)$ be the operator field defined by
$a(s)=a_i(s)$, for $s\in\Delta_i$, and $a(s)=0$, for $s\not\in\Delta_i$. Since $\Delta_i$ is a clopen set,
the maps $s\rightarrow\|a(s)\|$ and $s\mapsto\langle a(s)\omega_1(s),\omega_2(s)\rangle$ are continuous for every $\omega_1,\omega_2\in\Omega$.
The operator field $a$ is also locally finite-dimensional, again because $\Delta_i$
is clopen and $a_i$ has the property on $\Delta_i$.
Hence, $a\in\Gamma$. Next, let $\pi_i:A_i\rightarrow c_iA$ be defined by $\pi_i(a_i)=c_ia$, where $a\in A$ is
any operator field that restricts to $a_i$ on
$\Delta_i$. This map is clearly well-defined, and a homomorphism.

\noindent\textit{\ref{theorem:reduction theorem-2:2}}. By Theorem \ref{theorem:inclusion}, $B((\Omega_i)_{\rm wk})=I(A_i)=I(c_iA)$.

\noindent\textit{\ref{theorem:reduction theorem-2:3}}.  By \cite[Lemma 6.2]{hamana1981}, $I(c_iA)=c_iI(A)$.
Hence, $I(A_i)=B((\Omega_i)_{\rm wk})$ and Theorem \ref{reduction theorem} immediately yields
$I(A)\cong\bigoplus_{i\in I}I(A_i)$.

\noindent\textit{\ref{theorem:reduction theorem-2:4}}. We take each $\mloc(A_i)$ to be a
C$^*$-subalgebra of $B((\Omega_i)_{\rm wk})$.
First we remark that the isomorphism $\rho$ from Theorem \ref{reduction theorem}
sends $A$ into $\bigoplus_iA_i$. To see why, recall that
$a\nu(s)=a(s)\nu(s)$, for all $a\in A$, $\nu\in\Omegawk$, and $s\in\Delta$
(Proposition \ref{proposition:b(Omega) in B(Omegawk)}). Since, for a given $i\in I$,
the action of $\rho_i(a)$ on $\nu_i\in(\Omega_i)_{\rm wk}$ is defined by $\nu_i\mapsto (a\nu)|_{\Delta_i}$,
where $\nu\in\Omegawk$ is any vector with $\nu|_{\Delta_i}=\nu_i$,
it is easy to verify that $\rho_i(a)$
is a weakly continuous almost finite-dimensional operator field on $\Delta_i$.

To show that $\rho\left(\mloc(A)\right)\subset\bigoplus_i\mloc(A_i)$,
let $x\in\mloc(A)\subset I(A)$ and suppose that $\varepsilon>0$.
 Thus, there is an essential ideal
$J\subset A$ and a multiplier $x\in M(J)$ such that $\|x-y\|<\varepsilon$.
Further, there exists an open dense subset $U\subset \Delta$
such that
\begin{equation}\label{eq J is ideal}
J\,=\,\{a\in A: a(s)=0\, , \  s\in \Delta\setminus U\}\,.
\end{equation}
For $i\in I$, let $U_i=\Delta_i \cap U$, which is an open dense set in $\Delta_i$. Therefore,
\begin{equation}\label{eq Ji is ideal}
J_i\,=\,\{a_i\in A_i:\,  a(s)=0,\ s\in \Delta_i\setminus U_i \}
\end{equation}
is an essential ideal in $A_i$. We aim to show that $\rho_i(y)\in M(J_i)$. To this end, select $a_i\in J_i$.
As $A_i\cong c_i\cdot A$, there is an $a\in A$ such that $a_i(s)=a(s)$ for
all $s\in\Delta_i$.
 Moreover, $a\in A$ can be chosen
so that $a(s)=0$ for all $s\in\Delta\setminus\Delta_i$.

Because $a_i\in J_i$, we conclude that $a(s)=0$ for all $s\in\Delta\setminus U$;
that is, $a\in J$. Therefore, $ya\in J$, which implies that
$ya(s)=0$ for all $s\in \Delta\setminus U$.
In particular, $ya(s)=0$ for all $s\in \Delta_i\setminus U_i$.
The element $\rho_i(y)a_i\in B((\Omega_i)_{\rm wk})$ is in fact an
operator field since $\rho_i(y)a_i=\rho_i(y)\rho_i(c_ia)=\rho_i(c_i(ya))\in A_i$.
Then, for all $s\in \Delta_i\setminus U_i$ and $\nu\in\Omegawk$,
\begin{align*}
[\rho_i(y)a_i](s)(T_ic_i\nu)(s)&=\rho_i(y)a_i(T_ic_i\nu)(s)
=\rho_i(c_iya)(T_ic_i\nu)(s)\\
&=(ya)\nu|_{\Delta_i}(s)=(ya)(s)\nu|_{\Delta_i}(s)=0.
\end{align*}
With $\nu$ being arbitrary, we conclude that $\rho_i(y)a_i(s)=0$, that is
$\rho_i(y)a_i\in J_i$,
and so $\rho_i(y)$ is a left multiplier of $J_i$. By a similar argument,
$\rho_i(y)$ is a right multiplier of $J_i$, and so $\rho_i(y)\in M(J_i)$.
 Thus, $\rho(y)\in\bigoplus_i\mloc(A_i)$
and
$\|\rho(x)-\rho(y)\|=\|x-y\|<\varepsilon$. As $\varepsilon>0$ was chosen arbitrarily, this proves that
$\rho(x)\in\bigoplus_i \mloc(A_i)$.

Conversely, let us show that
$\bigoplus_i \mloc(A_i)\subset\rho(\mloc(A))$.
Let $(x_i)_i\in \bigoplus_i \mloc(A_i)$; thus for each $i\in I$,
there exist an essential ideal $J_i\subset A_i$ and  $y_i\in M(J_i)$ such that $\|x_i-y_i\|< \varepsilon$ for all $i\in I$. For each $i\in I$,
there exists an open dense subset $U_i\subset \Delta_i$ such that $J_i$ is given as in \eqref{eq Ji is ideal}. Define $U=\bigcup_{i\in I} U_i$,
which is an open dense subset of $\Delta$ and let $J$ be the essential ideal of $A$
defined as in \eqref{eq J is ideal} (for our present choice of $U$).
Let $y\in B(\Omegawk)$ be such that $\rho(y)=(y_i)_i$.

For each $\omega\in\Omega$, we have that $y\omega\in\Omegawk$.

\bigskip

{\sc Claim 1. }If $\omega\in \Omega$ is such that $\omega(s)=0$ for all $s\in \Delta\setminus U$,
then $y\omega\in \Omega$ and $y\omega(s)=0$ for $s\in \Delta\setminus U$.

\bigskip

Assuming Claim 1,
consider the set $F_+=\mbox{span}\,\{\Theta_{\omega,\omega}:\
\omega\in \Omega,\ \omega(s)=0 \ \mbox{ for } \ s\in
\Delta\setminus U\}\,$, which by Lemma \ref{lemma:pos dense} is
dense in $K_+$, where $K$ is the essential ideal of $K(\Omega)$
defined by $K=K(\Omega)\cap J$. By the Claim,
$y\Theta_{\omega,\omega}= \Theta_{y\omega,\omega}\in K$ for all
$\omega\in\Omega$. Therefore, $y$ is a left multiplier of $K$.
Similarly, $y$ is a right multiplier of $K$ , which yields $y\in
M(K)$. Hence, $(x_i)_{i\in I}$ is within $\varepsilon$ of a
multiplier---namely, $\rho(y)$---of an essential ideal of
$\rho\left(K(\Omega)\right)$. Thus, by the Frank--Paulsen
description of local multiplier algebras \cite{frank--paulsen2003},
$(x_i)_{i\in I}\in\rho\left(\mloc
(K(\Omega))\right)$. By Theorem \ref{theorem:extension of
Somerset}, $\mloc(A)=\mloc(K(\Omega))$, so $(x_i)_{i\in I}\in\rho(\mloc(A))$.

We are now left with proving Claim 1.
Assume that $\omega\in \Omega$ with $\omega(s)=0$ for all $s\in \Delta\setminus U$.
Let $i\in I$ and let $\omega_i=\omega|_{\Delta_i}\in \Omega_i$.
Note that
for every $\eta_i\in \Omega_i$, $\Theta_{\omega_i,\,\eta_i}\in J_i$, and hence
$\Theta_{y_i\omega_i,\eta_i}=y_i\Theta_{\omega_i,\eta_i}\in J_i\,$.
Also, $y_i\omega_i\in \Omega_i$. Indeed,
suppose that $s_0\in \Delta_i$ and let $\eta_i\in \Omega_i$ such that $\|\eta_i(s_0)\|=1$. Choose a clopen subset $V_i\subset \Delta_i$ of $s_0$ for which
$\|\eta_i(s)\|\geq 1/2$ for all $s\in V_i$
and define
$f(s)\,=\,\chi_{V_i}(s)\|\eta_i(s)\|^{-2}\,$.
Thus, $f\in C(\Delta_i)$ and so $f\cdot \eta_i\in \Omega_i\,$. Then,
since $\Theta_{y_i\omega_i,\eta_i}\in J_i\subset A_i\,$,
we have $\Theta_{y_i\omega_i,\eta_i}(f\cdot\eta_i)\in\Omega_i\,$. So
$\chi_{V_i}\cdot y_i \omega_i=\Theta_{y_i\omega_i,\eta_i}(f\cdot\eta_i)
 \in \Omega_i\,$.
Thus, $y_i\,\omega_i$ is a local uniform limit of vectors fields in $\Omega_i$
and hence, $y_i\,\omega_i\in \Omega_i$.
Moreover, since $\Theta_{y_i\omega_i, \, \eta_i}\in J_i$ for any $\eta_i\in \Omega_i$,
we have $y_i\omega_i(s)=0$ for $s\in \Delta_i\setminus U_i$.

Since $(y\omega)(s)=(y_i\,\omega_i)(s)$ for $s\in\Delta_i\,$, the lower semicontinuous function
$s\mapsto \|(y\omega)(s)\|$ is continuous on $\bigcup_i \Delta_i$ and vanishes
on $(\bigcup_i \Delta_i)\setminus U$.

{\sc Claim 2. }There exists $C>0$ such that $\|y\omega(s)\|\leq C\,\|\omega(s)\|\,$,
$s\in \Delta_i$, $i\in I$.

We will use Claim 2 to show that the function $s\mapsto \|(y\omega)(s)\|$ is continuous on $\Delta$.
Let $s\in \Delta\setminus (\bigcup_i \Delta_i)$ and let
$(s_\alpha)_{\alpha}\subset \bigcup_i \Delta_i$  be a net such that
$s_\alpha\rightarrow s$ in $\Delta$. This implies that
$\lim_\alpha \|\omega(s_\alpha)\|=0$. By lower semicontinuity of the function $s\mapsto \|(y\omega)(s)\|$,
\[
0\leq \|y\omega(s)\|\leq \lim_\alpha\|y\omega(s_\alpha)\|\leq C\ \lim_\alpha\|\omega(s_\alpha) \|=0\,,
\]
and it follows that
$s\mapsto \|(y\omega)(s)\|$ is continuous on $\Delta$ and vanishes in $\Delta\setminus U$.  This establishes Claim 1.

We finish the proof by proving Claim 2. Fix $s\in\Delta_i$, and let
$C=\sup_i\|y_i\|$. We already know that $y_i\omega_i\in\Omega_i$, and so
\begin{align*}
\|y\omega(s)\|&=\|y_i\omega_i(s)\|=\|y_i\omega_i\|(s)\leq\|y_i\|\,\|\omega_i\|(s)\\
&\leq C\,\|\omega_i\|(s)
=C\,\|\omega_i(s)\|=C\,\|\omega(s)\|. \qedhere
\end{align*}
\end{proof}

Local multiplier algebras behave well under direct sums: $\mloc(\oplus_i A_i)\cong\oplus_i\,\mloc(A_i)$ \cite[Proposition 2.3.6]{Ara--Mathieu-book}. However,
the isomorphism of local multiplier algebras in Theorem \ref{theorem:reduction theorem-2}
cannot be established via that generic result:
\begin{proposition}\label{remark:examples 2}  Assume the notation, hypotheses, and conclusions of Theorem \ref{theorem:reduction theorem-2}.
Although $\rho$ sends $A$ into $\bigoplus_iA_i$, it need not be true that
$A\cong\bigoplus_iA_i$.
\end{proposition}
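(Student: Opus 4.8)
The plan is to exhibit a single counterexample, reusing the trivial-bundle configuration of Proposition \ref{proposition:examples 1}. Concretely, I would take $\Delta=\beta\mathbb{N}$, the Stone--\v Cech compactification of $\mathbb{N}$ (which is Stonean), $I=\mathbb{N}$, and the singleton clopen sets $\Delta_i=\{i\}$, so that $\bigcup_i\Delta_i=\mathbb{N}$ is dense in $\Delta$ but properly contained in it. With $H$ a fixed separable infinite-dimensional Hilbert space and $\Omega=C(\Delta,H)$ the trivial bundle, the Fell algebra of Definition \ref{definition:Fell algebra} is $A=C(\beta\mathbb{N},K(H))$, while $A_i=C(\{i\},K(H))\cong K(H)$.

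The first point to record is the precise meaning of the direct sum. Because the embedding $\rho$ of Theorem \ref{theorem:reduction theorem-2} acts by $\rho(a)=(a|_{\Delta_i})_i$ and one has only the bound $\|a|_{\Delta_i}\|\le\|a\|$ with no decay in $i$ (for instance when $a$ is a field of rank-one projections), the direct sum $\bigoplus_iA_i$ into which $\rho$ maps is necessarily the \emph{bounded} ($\ell^\infty$) direct sum. In the present example this is $B:=\bigoplus_iA_i=\ell^\infty(\mathbb{N},K(H))$, the $C^*$-algebra of all norm-bounded sequences in $K(H)$; here $\rho(A)$ consists exactly of those sequences with relatively compact range, so $\rho$ is not onto. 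Non-surjectivity of $\rho$ alone does not rule out an abstract isomorphism, however, and that is what must be excluded.

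The decisive step is to separate $A$ and $B$ by an isomorphism invariant, for which I would use $\sigma$-unitality. On one hand $A=C(\beta\mathbb{N})\otimes K(H)$ is $\sigma$-unital: if $h\in K(H)$ is a strictly positive (hence faithful) compact operator, then $1\otimes h$ is strictly positive in $A$, since $\overline{(1\otimes h)A(1\otimes h)}=C(\beta\mathbb{N})\otimes\overline{hK(H)h}=A$. On the other hand $B=\ell^\infty(\mathbb{N},K(H))$ is \emph{not} $\sigma$-unital. To see this, fix any $b=(b_n)\in B_+$; each $b_n$ is compact and therefore cannot be bounded below, so $\inf_{\|\xi\|=1}\langle b_n\xi,\xi\rangle=0$. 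Choose unit vectors $\xi_n\in H$ with $\langle b_n\xi_n,\xi_n\rangle<1/n$, fix a free ultrafilter $\mathcal{U}$ on $\mathbb{N}$, and define the state $\psi(a)=\lim_{\mathcal{U}}\langle a_n\xi_n,\xi_n\rangle$ on $B$. Then $\psi(b)=0$, so by the Cauchy--Schwarz inequality for $\psi$ one gets $\psi(bxb)=0$ for all $x\in B$ and hence $\psi$ vanishes on $\overline{bBb}$; but $\psi(c)=1$ for the positive element $c=(\xi_n\otimes\xi_n)_n\in B$, so $c\notin\overline{bBb}$ and $b$ is not strictly positive. As $b$ was arbitrary, $B$ admits no strictly positive element. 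Since $\sigma$-unitality is preserved under $*$-isomorphism, $A\not\cong B$, which is the assertion.

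I expect the main obstacle to be the verification that $B$ fails to be $\sigma$-unital, since this is precisely where the extra points of $\Delta\setminus\bigcup_i\Delta_i$ (equivalently, the behaviour of the index set at infinity) genuinely enter: the ultrafilter state is the device that converts the pointwise compactness of each $b_n$ into a global obstruction. A secondary technical point worth care is confirming that the theorem's direct sum is the bounded one, as the entire contrast with the generic identity $\mloc(\bigoplus_iA_i)\cong\bigoplus_i\mloc(A_i)$ hinges on this identification.
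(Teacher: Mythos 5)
Your proof is correct, but it takes a genuinely different route from the paper's --- and in fact proves a stronger statement. The paper's own proof is two lines: it reuses the example of Proposition \ref{proposition:examples 1}, where $\omega=\sum_i \chi_{\Delta_i}\cdot\tilde\omega_i\in\Omegawk\setminus\Omega$ is built from the constant orthonormal-basis fields $\omega_i(s)=e_i$, and observes that $(\Theta_{\omega_i,\omega_i})_{i\in I}\in\bigoplus_iA_i$ equals $\rho(\Theta_{\omega,\omega})$, which cannot lie in $\rho(A)$ because $\omega\notin\Omega$ (the norm function $s\mapsto\|\omega(s)\|$ is discontinuous). Read literally, this shows only that the \emph{canonical} embedding $\rho|_A$ is not onto, i.e.\ that $\rho$ does not implement an isomorphism --- which is all the surrounding discussion needs (namely, that part \textbf{iv} of Theorem \ref{theorem:reduction theorem-2} cannot be deduced from the generic identity $\mloc(\bigoplus_iA_i)\cong\bigoplus_i\mloc(A_i)$). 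You correctly point out that non-surjectivity of one particular embedding does not exclude an abstract isomorphism, and your argument closes exactly that gap in the concrete case $\Delta=\beta\NN$, $\Delta_i=\{i\}$: there $A\cong C(\beta\NN)\otimes K(H)$ is $\sigma$-unital (your verification that $1\otimes h$ is strictly positive is fine), whereas $\bigoplus_iA_i=\ell^\infty(\NN,K(H))$ admits no strictly positive element by your ultrafilter-state argument, whose key steps --- $\psi(b)=0$ forces $\psi\equiv 0$ on $\overline{bBb}$ via Cauchy--Schwarz, yet $\psi(c)=1$ for $c=(\xi_n\otimes\xi_n)_n$ --- are all valid; hence no $*$-isomorphism can exist. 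Your reading of $\bigoplus$ as the bounded ($\ell^\infty$) direct sum is also the right one: it is forced by Theorem \ref{reduction theorem}, whose isomorphism carries the unit of $B(\Omegawk)$, and by the paper's own use of norm-one families such as $(\Theta_{\omega_i,\omega_i})_i$. The trade-off: the paper's argument works for \emph{every} configuration with $\bigcup_i\Delta_i\neq\Delta$ and needs nothing beyond Proposition \ref{proposition:examples 1}, but it only rules out the canonical isomorphism; yours needs the standard identification of the Fell algebra of a trivial bundle with $C(\Delta)\otimes K(H)$ (asserted in the paper's introduction) together with the $\sigma$-unitality invariant, but it delivers honest non-isomorphism, which is what the words ``$A\cong\bigoplus_iA_i$'' literally assert.
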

\begin{proof}
If $\Delta$ and $\Omega$ are as in Proposition \ref{proposition:examples 1}, then
$\rho(\Theta_{\omega,\omega})=(\Theta_{\omega_i,\omega_i})_{i\in I}\in \oplus _{i\in I} A_i$, but
$\rho(\Theta_{\omega,\omega})\not\in\rho(A)$.
\end{proof}


\end{document}